\let\phi=\varphi
\let\e=\varepsilon
\let\d=\delta
\let\subset=\subseteq
\newcommand{\N}{\mathbb N}
\newcommand{\Z}{\mathbb Z}
\newcommand{\R}{\mathbb R}
\newcommand{\T}{\mathfrak{T}}
\newcommand{\LF}{\mathfrak{L}}
\newcommand{\on}{\operatorname}
\newcommand{\id}{\on{id}}
\newcommand{\graph}{\on{graph}}
\newcommand{\ol}[1]{\overline{#1}}
\newcommand{\vol}{\on{vol}}
\newtheorem{satz}{Satz}[section]
\newtheorem*{rmrk}{Remark}
\newtheorem*{exmpl}{Example}
\newtheorem{theorem}[satz]{Theorem}
\newtheorem{definition}[satz]{Definition}
\newtheorem{lemma}[satz]{Lemma}
\newtheorem{cor}[satz]{Corollary}
\newtheorem{fact}[satz]{Fact}
\newtheorem{prop}[satz]{Proposition}
\newtheorem{remark}[satz]{Remark}
\DeclareMathOperator*{\supp}{supp}
\DeclareMathOperator*{\conv}{conv}
\DeclareMathOperator*{\diam}{diam}
\DeclareMathOperator*{\dist}{dist}
\DeclareMathOperator*{\pos}{pos}
\DeclareMathOperator*{\defa}{Def}
\DeclareMathOperator*{\fil}{fill}
\DeclareMathOperator*{\err}{err}
\DeclareMathOperator*{\inj}{inj}
\DeclareMathOperator*{\Time}{Time}
\DeclareMathOperator*{\Light}{Light}
\DeclareMathOperator*{\std}{std}
\DeclareMathOperator*{\relint}{relint}
\begin{document}

\title{Aubry-Mather Theory for Lorentzian Manifolds}

\author[]{Stefan Suhr}
\address{Fakult\"at f\"ur Mathematik, Ruhr-Universit\"at Bochum, Universit\"atsstra\ss e 150, 44780 Bochum (Germany)}
\email{stefan.suhr@rub.de}

\date{\today}

\begin{abstract}
We introduce a version of Aubry-Mather theory for the length functional of causal curves in compact Lorentzian manifolds. Results
include the existence of maximal invariant measures, calibrations and calibrated curves. We prove two versions of 
the Mather's graph theorem. A class of examples, the Lorentzian Hedlund examples, shows the optimality of the obtained results.
\end{abstract}

\maketitle

\section{Introduction}

Aubry-Mather theory is a well established part of the study of Tonelli Lagrangian and Tonelli Hamiltonian systems, see \cite{fathi_book,ma}. 
It combines methods of both the calculus of variations and smooth dynamical systems. Riemannian and Finsler manifolds provide important examples for 
Aubry-Mather theory.  In the present paper we direct the attention towards an Aubry-Mather theory for Lorentzian 
manifolds. This attempt is based on the geometric character of Aubry-Mather theory. The minimality assumptions on the curves 
in the Tonelli case translate readily to a maximality assumption on causal curves in Lorentzian manifolds. Recall that in a 
Lorentzian $m$-manifold, with $m\ge 3$, there is a sensible notion of extremals of the length functional only for causal curves.

Parts of a Lorentzian Aubry-Mather theory have been studied in special cases, namely compact $2$-manifolds in \cite{schelling} and globally 
conformally flat Lorentzian tori in \cite{su}. The related Hamilton-Jacobi equation has been studied on Lorentzian 2-tori in \cite{jincui}. Maximal geodesics 
in Lorentzian 2-tori with poles have been studied in \cite{pjc1,pjc2}.

We will generalize results from \cite{ba,ba2,buik,ma} to the naturally given class of so-called class A spacetimes, see \cite{suh103}. 
A compact Lorentzian manifold $(M,g)$ is of class A if it is (1) time orientable, i.e. it gives rise to a continuous 
timelike vector field, (2) it is vicious, i.e. every point lies on a timelike loop and (3) the Abelian cover is globally hyperbolic, 
see Definition \ref{D0}. In a rough sense this can be viewed as a minimal catalogue of requirements on a Lorentzian manifolds 
in order to support a Lorentzian Aubry-Mather theory. 

The central objects of Lorentzian Aubry-Mather theory are the stable time cone, see \cite{suh103} and Section \ref{rieba},
and the stable time separation, see Section \ref{secsttise}. Both together form the analogue of the stable norm of a Riemannian metric, see \cite{gromov}, or more generally 
Mather's $\beta$-function, see \cite{ma}.

The main results include the existence and multiplicity of maximal ergodic measures, see Theorem \ref{P10}, the existence of calibrations for class A 
spacetimes, see Theorem \ref{P20-}, the relation between maximal measures and calibrations, see Theorem \ref{P20+}, and versions of Mather's 
graph theorem, see Theorems \ref{T20}, \ref{C22}, and \ref{T23}. Finally in Section \ref{hedex} we introduce the Lorentzian Hedlund examples which 
show the optimality of the obtained results.

The text is organized as follows: In Section \ref{results} the main results are introduced with their necessary prerequistes. In Section \ref{hedex} the Lorentzian Hedlund examples are discussed. Section \ref{rieba} provides the background for the proofs, which are given in the remainder of Sections \ref{proofs}.

\subsubsection*{Acknowledgment} This research is supported by the SFB/TRR 191 ``Symplectic Structures in Geometry, Algebra and Dynamics'', funded by the Deutsche Forschungsgemeinschaft.

\section{Results}\label{results}

\subsection{Class A spacetimes}
Let $M$ be a smooth connected manifold without boundary. We fix a Riemannian metric $g_R$ on $M$. A tensor field 
$$g\in \Gamma(T^0_2 M)$$ 
is a {\it Lorentzian metric} if for every $p\in M$ the bilinear form on $TM_p$ is symmetric and non-degenerate with index equal to $1$. The Lorentzian 
manifold $(M,g)$ is {\it time orientable} if the set of {\it causal} tangent vectors 
$$\{v\in TM|\; v\neq 0, g(v,v)\le 0\}$$ 
is not connected. A {\it time orientation} 
then is the choice of one connected component of $\{v\neq 0, g(v,v)\le0\}$. Tangent vectors in that connected component are called {\it future-pointing}. All other causal vectors are called {\it past-pointing}. Note that every Lorentzian manifold admits a twofold time orientable cover.
A Lorentzian manifold $(M,g)$ is a {\it spacetime} if it is time-oriented. For details about Lorentzian geometry refer to the standard textbook references \cite{hawk}, \cite{onei} and \cite{be}. 
For more recent deve\-lopments in causality theory see \cite{ms1}. We refer the reader to Section \ref{rieba} for definitions and properties employed in the 
following.

\begin{definition}\label{D1}
A closed spacetime $(M,g)$ is of {\it class A} if $(M,g)$ is vicious and the Abelian cover 
$\overline\pi \colon (\overline M,\overline g)\rightarrow (M,g)$ is globally hyperbolic.
\end{definition}

The {\it Abelian cover} $\overline{M}$ of $M$ is defined as $\overline{M}:=\widetilde{M}/[\pi_1(M),\pi_1(M)]$ where $\widetilde{M}$ denotes the universal 
cover of $M$ and $[\pi_1(M),\pi_1(M)]$ denotes the commutator subgroup of the fundamental group $\pi_1(M)$. Therefore the group of deck transformations
is isomorphic to $H_1(M,\Z)$.

\subsection{The Stable Time Separation}\label{secsttise}

For a compact and vicious spacetimes $(M,g)$ we define the {\it stable time cone} 
$$\T$$ 
to be the closure of the cone over the homology classes of future-pointing loops, see \cite{suh103} and Section \ref{rieba}. 
For $\e >0$ set 
$$\T_\e:=\{h\in\T|\; \dist\nolimits_{\|.\|}(h,\partial \T)\ge \e \|h\|\}$$
where $\|.\|$ denotes the stable norm with respect to $g_R$, see \cite{gromov} and Section \ref{rieba}.

Denote with 
$$d\colon \overline{M}\times\overline{M}\to \R$$
the {\it time separation} of $(\overline{M},\overline{g})$
and with 
$$\ol{y}-\ol{x}\in H_1(M,\R)$$
the {\it difference} of $\ol{x}, \ol{y}\in \ol{M}$, see Section \ref{rieba}.

We have the following analogue of the stable norm for class A spacetimes.

\begin{theorem}\label{T17}
Let $(M,g)$ be of class A. Then there exists a unique function 
$$\mathfrak{l}\colon \T \rightarrow \R$$ 
such that for 
every $\e >0$ there is a constant $\overline{C}(\e)<\infty$ with
\begin{enumerate}
\item $|\mathfrak{l}(\ol{y}-\ol{x})-d(\ol{x},\ol{y})|\le \overline{C}(\e)$ for all $\ol{x},\ol{y}\in \overline{M}$ with $\ol{y}-\ol{x}\in \T_\e$ and
\item $\mathfrak{l}(\lambda h)=\lambda \mathfrak{l}(h)$, for all $\lambda \ge 0$,
\item $\mathfrak{l}(h'+h)\ge \mathfrak{l}(h')+\mathfrak{l}(h)$ for all $h,h'\in\T$ and
\item $\mathfrak{l}(h)=\limsup_{h'\to h}\mathfrak{l}(h')$ for $h\in \partial \T$ and $h'\in \T$.
\end{enumerate}
We will call $\mathfrak{l}$ the {\it stable time separation}. 
\end{theorem}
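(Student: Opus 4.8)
The plan is to construct $\mathfrak{l}$ as a subadditive limit, exploiting the coarse-Lipschitz estimate of Theorem \ref{T16} to control the error terms. First I would fix $\e>0$ and a class $h\in\T_\e\cap H_1(M,\Z)_\R$ that is $\T_p$-rational, so that $h$ (or a positive integer multiple $n_0h$) is represented by a future pointing timelike loop; then for $x\in\overline M$ the points $x$ and $x+n h$ satisfy $x+nh-x=nh\in\T_\e$ for all $n\ge 1$, and by Proposition \ref{3.2} together with viciousness they are connected by future pointing timelike curves, so $d(x,x+nh)>0$ for $n$ large. The function $n\mapsto d(x,x+nh)$ is superadditive in $n$ by concatenation of maximal curves (the triangle inequality for the time separation goes the ''wrong'' way, i.e. $d(x,z)\ge d(x,y)+d(y,z)$ for $y\in J^+(x)\cap J^-(z)$), so Fekete's lemma gives the existence of $\lim_{n\to\infty}\tfrac1n d(x,x+nh)=:\mathfrak{l}(h)=\sup_n \tfrac1n d(x,x+nh)$. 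Theorem \ref{T16} shows this limit is independent of the chosen base point $x$ (the difference $|d(x,x+nh)-d(z,z+nh)|\le L_c(\e)(\dist(x,z)+\dist(x,z)+1)$ is bounded in $n$), and a further application bounds $|\mathfrak{l}(h)-d(x,x+nh)/n|$; rescaling by $1/n$ and taking the limit makes the error disappear, giving property (1) on the $\T_\e$-rational classes. Homogeneity (2) over positive rationals is immediate from the definition, and superadditivity (3) on rational classes of the same ''$\e$-type'' follows again from superadditivity of $d$ plus the uniform error bound.

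Next I would pass from the dense set of $\T_\e$-rational classes to all of $\T$. The key point is that on $\T_\e$ the family $\{\tfrac1N d(x,x+N\,\cdot\,)\}$ is, up to bounded error, a family of functions that are monotone-in-$N$ and that I can show are uniformly Lipschitz on compact subsets of $\T_{2\e}$, say — this Lipschitz bound comes from Theorem \ref{T16} applied to nearby classes $h,h'\in\T_\e$, since $d(x,x+Nh)$ and $d(x,y)$ with $y-x=Nh'$, $\dist(y,x+Nh)$ controlled, differ by $O(N\|h-h'\|)$. Hence $\mathfrak{l}$ extends to a Lipschitz (in particular continuous) function on the interior $\T^\circ$, concavity passing to the limit from superadditivity + homogeneity, and then $\mathfrak{l}$ is finite and concave on $\T^\circ$, with (1)–(3) inherited. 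Property (1) for \emph{all} $x,y$ with $y-x\in\T_\e$ (not just rational differences) then follows from the already-established rational case plus one more invocation of Theorem \ref{T16} to handle the non-rational remainder and the base-point change simultaneously: approximate $h=y-x$ by a $\T_{\e/2}$-rational $h'$ with $\|h-h'\|$ small, compare $d(x,y)$ with $d(x,x+h')$ via the coarse-Lipschitz estimate, and compare the latter with $\mathfrak{l}(h')\approx\mathfrak{l}(h)$.

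For property (4), the behavior on $\partial\T$, I would \emph{define} $\mathfrak{l}(h):=\limsup_{h'\to h,\,h'\in\T^\circ}\mathfrak{l}(h')$ for $h\in\partial\T$ and check this is finite: the bound $\mathfrak{l}(h')\le (1+\std)\|h'\|$-type control — more precisely, $d(x,y)\le C_{g,g_R}\dist(x,y)$ and $\dist(x,x+h')\le (\diam+\text{const})\|h'\|$ via Corollary \ref{1.10} — keeps $\mathfrak{l}$ bounded near the boundary, and concavity on $\T^\circ$ forces the $\limsup$ to behave like a concave boundary value. Uniqueness of $\mathfrak{l}$ is clear: any function satisfying (1) must agree with $\lim_n \tfrac1n d(x,x+nh)$ on $\T_\e\cap\,(\text{rational classes})$ for every $\e$, hence on the dense set $\bigcup_\e \T_\e^\circ=\T^\circ$, hence everywhere by (4). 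I expect the main obstacle to be the bookkeeping needed to promote property (1) from rational homology classes with a convenient timelike-loop representative to \emph{arbitrary} pairs $x,y\in\overline M$ with $y-x\in\T_\e$ — this is exactly where one must combine the Fekete limit, the base-point independence, and a density/continuity argument, and each comparison costs an application of the coarse-Lipschitz Theorem \ref{T16} whose additive constant must be tracked carefully so that it vanishes after the $1/n$ rescaling.
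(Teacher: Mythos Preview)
Your outline is broadly in line with the paper's approach, but there is a genuine gap at the heart of property~(1). Fekete's lemma applied to the superadditive sequence $n\mapsto d(x,x+nh)$ gives you the existence of the limit $\mathfrak{l}(h)=\sup_n\frac{1}{n}d(x,x+nh)$ and hence the one-sided bound $d(x,x+h)\le\mathfrak{l}(h)$, but it does \emph{not} give a uniform upper bound on $\mathfrak{l}(h)-d(x,x+h)$. A superadditive sequence can satisfy $nL-a_n\to\infty$ (take $a_n=n-\sqrt{n}$), so your sentence ``a further application bounds $|\mathfrak{l}(h)-d(x,x+nh)/n|$; rescaling by $1/n$ and taking the limit makes the error disappear'' conflates convergence of $\frac{1}{n}d(x,x+nh)$ to $\mathfrak{l}(h)$ with the much stronger uniform estimate $|n\mathfrak{l}(h)-d(x,x+nh)|\le\overline{C}(\e)$ that property~(1) actually asserts. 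Theorem~\ref{T16} by itself only controls the dependence on base and target points; it does not produce the missing direction.

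What the paper does instead is work with $\mathfrak{d}(h):=\sup_x d(x,x+h)$ and establish, in addition to superadditivity, the \emph{approximate subadditivity} $2\mathfrak{d}(h)\ge\mathfrak{d}(2h)-D(\e)$ (Lemma~\ref{L10}(2)). This is the nontrivial step: it requires a cut-and-paste argument on a maximizer from $x$ to $x+2h$, using Fact~\ref{F1} to reconnect pieces and Theorem~\ref{T16} to compare. With both directions in hand, Burago's two-scale lemma (Lemma~\ref{L11}, which uses the doubling \emph{and} tripling inequalities simultaneously) yields $|\mathfrak{d}(nh)-n\mathfrak{l}(h)|\le 2C$ directly, from which property~(1) on integer classes follows, and then the extension to arbitrary $h\in\T_\e$ via Theorem~\ref{T16} and Fact~\ref{F1} proceeds as you describe. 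Your treatment of properties~(2)--(4) and of the extension to $\partial\T$ is fine and matches the paper; the missing ingredient is precisely Lemma~\ref{L10}(2) and the use of Lemma~\ref{L11} in place of Fekete.
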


\begin{rmrk}
The stable time separation is concave by the properties (2) and (3).
\end{rmrk}

We call a future-pointing curve $\gamma\colon [a,b]\rightarrow \overline{M}$ a {\it maximizer} if $\gamma$ maximizes arclength over all future-pointing curves 
connecting $\gamma(a)$ with $\gamma(b)$. For the convenience of notation we call $\gamma\colon [a,b]\to M$ a maximizer if one (hence every) lift to 
$\overline{M}$ is a maximizer. A future-pointing curve $\gamma\colon \R\to M$ (or $\overline{M}$) is a maximizer if the restriction $\gamma|_{[a,b]}$ is a 
maximizer for every finite interval $[a,b]\subset \R$. Note that we do not impose any a priori restrictions on the parameterization of a maximizer. It is classical that
any maximizer is in fact a pregeodesic. 

We define the {\it rotation vector} of  $\overline{\gamma}\colon [a,b]\to \overline{M}$ as well as of $\overline{\pi}\circ \overline{\gamma}$:
$$\rho(\overline{\gamma})=\rho(\overline{\pi}\circ \overline{\gamma}):=\frac{1}{b-a}(\ol\gamma(b)-\ol\gamma(a)).$$
A sequence of causal curves $\{\gamma_i\colon [a_i,b_i]\to M\}_{i\in \mathbb N}$ is {\it admissible}, if $L^{g_R}(\gamma_i)\to\infty$ for $i\to \infty$. 
\begin{remark}\label{R10a}
The Avez-Seifert Theorem \cite{avez,seifert} implies that for any $h\in\T$ there exists an admissible sequence of maximizers $\{\gamma_n\colon [a_n,b_n]
\to M\}_{n\in\N}$ such that $\rho(\ol\gamma_n)\to h$, where $\ol\gamma_n$ is any lift to $\ol M$.
\end{remark}

\begin{cor}\label{prop1}
Consider an admissible sequence $\gamma_n\colon [a_n,b_n]\to M$ $(n\in\mathbb N)$ of maximizers such that $b_n-a_n\to\infty$ and 
suppose that $\rho(\gamma_n)\to h\in \T^\circ$. Then we have
$$\frac{L^g(\gamma_n)}{b_n-a_n} \to \mathfrak{l}(h),$$
for $n\to \infty$. 
\end{cor}

\begin{rmrk}
The corollary extends to $\T$ if $\mathfrak{l}|_{\partial\T}\equiv 0$. However if $\mathfrak{l}|_{\partial\T\setminus \{0\}}>0$ 
we can easily construct a counterexample from Section \ref{hedex}.
\end{rmrk}

\begin{proof}[Proof of Corollary \ref{prop1}]
Consider any lift $\ol \gamma_n$ of $\gamma_n$ to $\ol M$. Choose $\e>0$ such that $h\in \T_\e$. Then we have, for $n$ sufficiently large,
\begin{align*}
\left|\frac{1}{b_n-a_n}L^g(\gamma_n)-\mathfrak{l}(h)\right|&\le \left|\frac{1}{b_n-a_n}L^g(\gamma_n)-\mathfrak{l}(\rho(\gamma_n))\right|+
|\mathfrak{l}(\rho(\gamma_n))-\mathfrak{l}(h)|\\
&= \left|\frac{1}{b_n-a_n}d(\ol \gamma_n(a_n),\ol\gamma_n(b_n))-\mathfrak{l}(\rho(\gamma_n))\right|+
|\mathfrak{l}(\rho(\gamma_n))-\mathfrak{l}(h)|
\end{align*}
The first term is bounded by $\frac{\ol C(\e)}{b_n-a_n}$. The second term converges to $0$ by assumption. This shows the claim.
\end{proof}

Let
$$\T^\ast:=\{\alpha\in H^1(M,\R)|\;\alpha|_{\T}\ge 0\}.$$
denote the {\it dual stable time cone}.

\begin{prop}\label{P9}
Let $(M,g)$ be a class A spacetime. Assume that there exists $\alpha\in \partial\T^\ast$ such that $\alpha^{-1}(0)\cap \T
\cap H_1(M,\Z)_\R=\emptyset$. Then we have $\mathfrak{l}|_{\alpha^{-1}(0)\cap \T}\equiv 0$.
\end{prop}

For the definition of $H_1(M,\Z)_\R$ see Section \ref{rieba}.
Note that the assumptions apply especially to $\alpha \in \partial\T^\ast$ and totally irrational with respect to $H_1(M,\Z)_\R$.

\subsection{Invariant Measures}\label{secinme}

Fundamental to Aubry-Mather theory is the completeness of the geodesic flow. In most cases however, even if $(M,g)$ is compact or class A, the geodesic 
flow of $(M,g)$ will not be causally complete. In fact one can prove that the generic Lorentzian metric on a compact surface is incomplete, see \cite{carroz}. 
Therefore an attempt to describe the relationship between the qualitative behavior of maximal causal geodesics and the convexity properties of the stable 
time separation $\mathfrak{l}$ using the geodesic flow of $(M,g)$ is not possible. One could argue to continue to use the one point 
compactification $TM\cup\{\infty\}$ of $TM$, as described in \cite{ma}, and extend the geodesic flow to $\infty$  
by setting $\Phi(\infty,t)\equiv \infty$. This encounters the following problem: In the presence of 
incomplete geodesics, some invariant measures will concentrate at $\infty$, even though they arise as limit measures of geodesics. 
Then it is not clear how to define the action of these measures. We circumvent this problem by reparameterizing the 
geodesic flow of $(M,g)$ to a flow $\Phi$ in a way that every flowline remains in a compact part of $TM$. 

For $v\in TM$ denote with $\gamma_v\colon (\alpha_v,\omega_v)\to M$ the unique inextendible geodesic of $(M,g)$ with 
$\dot{\gamma}_v(0)=v$. Further denote with $\mathcal{Z}$ the image of the zero section in $TM$. 

\begin{prop}
Let $(M,g)$ be a pseudo-Riemannian manifold, $\Phi^g$ its geodesic flow and $g_R$ a complete Riemannian metric on $M$. 
Define 
$$\Phi\colon TM\setminus\mathcal{Z}\times \R\to TM\setminus\mathcal{Z}\text{, }(v,t)\mapsto \hat{\gamma}_v'(t),$$ 
where $\hat{\gamma}_v$ is the tangent field to the constant $g_R$-arclength parameterization of $\gamma_v$ with 
$|\hat{\gamma}_v'|=|v|$. Then $\Phi$ is a smooth flow, called the pregeodesic flow of $(M,g)$ relative to $g_R$.
\end{prop}

\begin{proof}
Denote with $\nabla$ and $\nabla^R$ the Levi-Civita connection of $(M,g)$ and $(M,g_R)$, respectively. Define the tensor field $T:=\nabla-\nabla^R$. 
Let $0\neq v\in TM$ and consider the unique $g$-geodesic $\gamma_v\colon (\alpha_v,\omega_v)\to M$ with $\dot{\gamma}_v(0)=v$. Denote with 
$\hat{\gamma}_v\colon \R\to M$ the constant $g_R$-arclength parameterization of $\gamma_v$ with 
$|\hat{\gamma}_v'|\equiv |v|$, where $\hat{\gamma}_v':=\frac{d}{dt}\hat{\gamma}_v$. Note that $\dot{\gamma}_v=\frac{|\dot{\gamma}_v|}{|v|}
\hat{\gamma}_v'$. Then we have 
\begin{align*}
0&=\nabla_{\dot{\gamma}_v}\dot{\gamma}_v=\nabla^R_{\dot{\gamma}_v}\dot{\gamma}_v
+T(\dot{\gamma}_v,\dot{\gamma}_v)\\
&=\frac{|\dot{\gamma}_v|^2}{|v|^2}
\left[\nabla^R_{\hat{\gamma}_v'}\hat{\gamma}_v'+T(\hat{\gamma}_v',\hat{\gamma}_v')
-\frac{1}{|v|^2}g_R(T(\hat{\gamma}'_v,\hat{\gamma}'_v),\hat{\gamma}_v')\hat{\gamma}_v'\right].
\end{align*}
Consequently $\hat{\gamma}_v$ satisfies the following differential equation:
\begin{equation}\label{E2}
\nabla^R_{\hat{\gamma}_v'}\hat{\gamma}_v'
=\frac{1}{|v|^2}g_R(T(\hat{\gamma}'_v,\hat{\gamma}'_v),\hat{\gamma}_v')\hat{\gamma}_v'
-T(\hat{\gamma}_v',\hat{\gamma}_v')
\end{equation}
It is easy to see that $g_R(\hat{\gamma}_v',\hat{\gamma}_v')$ is preserved along $\hat{\gamma}_v$. 
\end{proof}

It is not clear whether for an arbitrary spacetime $(M,g)$ the pregeodesic flow $\Phi\colon TM\setminus\mathcal{Z}\times \R\to TM\setminus\mathcal{Z}$
is induced by a variational principle. In special cases though this can be the case, for example if $g_R$ is a first integral of $\Phi^g$. The assumption of 
a variational principle leading to $\Phi$ is similar to the problem of geodesically equivalent manifolds, see \cite{matv1}.

\begin{rmrk}
From this point on we will not consider $\Phi$ itself, but the restriction of $\Phi$ to the unit tangent bundle $T^{1}M$ of $(M,g_R)$. We omit the indication 
of the restriction and denote $\Phi|_{T^{1}M\times\R}$ with $\Phi$ as well. Further pregeodesics will always be parameterized by $g_R$-arclength.
\end{rmrk}

\begin{lemma}\label{L0c}
Let $f\colon M\rightarrow \R$ be a Lipschitz continuous function and $\mu$ a finite $\Phi$-invariant 
Borel measure on $T^{1}M$. Then we have
$$\int_{T^{1}M} \partial f_v d\mu(v)=0.$$
\end{lemma}

Lemma \ref{L0c} permits us to associate a unique homology class to every finite $\Phi$-invariant Borel measure $\mu$ on 
$T^{1}M$.
\begin{definition}\label{D33}
Let $\mu$ be a finite $\Phi$-invariant Borel measure. Define the rotation vector $\rho(\mu)\in H_1(M,\R)$ the unique homology class satisfying
  $$\langle[\omega],\rho(\mu)\rangle:=\int_{T^{1}M} \omega d\mu\, ,$$
for every closed $1$-form $\omega$ on $M$. 
\end{definition}

Next we introduce the notion of {\it maximal} invariant measures with fixed homology class. Analogous to the case of curves this is sensible only in the 
class of finite invariant measures with support entirely in the set of future-pointing vectors. Denote with 
$$\mathfrak{M}_g$$ the set of finite $\Phi$-invariant (or for short invariant) Borel measures with support in the set of future-pointing vectors of $T^{1}M$. 
Further we denote with $\mathfrak{M}^1_g$ the set of invariant probability measures with support in the future-pointing $g_R$-unit vectors. 

Recall that $\mathfrak{M}^1_g$ is compact with respect to the weak-$\ast$ topology and its extremal points are precisely the 
ergodic measures of $(T^{1}M,\Phi)$, by the Theorem of Krein-Milman, see \cite{lan}. 

For $\mu\in\mathfrak{M}_g$ define the {\it average length} of $\mu$:
$$\LF(\mu):=\int_{T^{1}M} \sqrt{-g(v,v)}\, d\mu(v)$$ 
Note that $\LF$ and $\omega \mapsto \int \omega\, d\mu$ for $\omega\in \Lambda^1(T^\ast M)$ are continuous functionals on $\mathfrak{M}_g$.

\begin{prop}\label{ct}
For $(M,g)$ of class A we have $\T=\rho(\mathfrak{M}_g)$ and
$$\mathfrak{l}(h)=\sup \{\LF(\mu)|\;\mu\in\mathfrak{M}_g\text{ with }\rho(\mu)=h\in \T\}.$$
\end{prop}

\begin{theorem}\label{P10}
Let $(M,g)$ be of class A and let $b:=\dim_\R H_1(M,\R)$ denote the first Betti number of $M$. Then the pregeodesic flow $\Phi$ admits at least 
$b$-many maximal ergodic measures.
\end{theorem}

The Lorentzian Hedlund examples in Section \ref{hedex} below will show the optimality of this claim.

\subsection{Calibrations}\label{sec5}

Calibrations are a common notion in differential geometry and variational analysis, see \cite{hala}. 
Especially in the calculus of variations they provide 
a powerful tool to study minimizers of convex variational problems. Since we are solely interested in the case of curves, 
the general definition of a calibration in terms of geometric measure theory is not needed. 
References for calibrations in the case of curves are \cite{buik} and \cite{ba2}. In \cite{buik} calibrations appear as 
``generalized coordinates''.
To our knowledge calibrations have made appearances in pseudo-Riemannian geometry is \cite{mealy,KMW,hala2}.

Consider a compact spacetime $(M,g)$ with Lorentzian cover $(M',g')$. Let $l\in (0,\infty)$. We call a function $\tau\colon M'\to\R$ an {\it $l$-pseudo-time 
function} if for every $p'\in M'$ there exists a convex normal neighborhood $U$ of $p'$ such that 
$$\tau(q')-\tau(p')\ge l\cdot d_U(p',q')$$
for all $q'\in J^+_U(p')$ where $d_U$ denotes the time separation of the spacetime $(U,g|_U)$. Note that if $\tau$ is Lipschitz, the inequality 
$\tau(q')-\tau(p')\ge l\,d(p',q')$ already implies that $\tau$ is a time function. This is due to the non-Lipschitz continuity of the time separation on the boundary 
$\partial(J^+_U(p'))$.

Recall that the group of deck transformations of the Abelian cover is isomorphic to $H_1(M,\Z)$. Therefore given a class $\alpha\in H^1(M,\R)$ 
we will call a function $f\colon \ol M\to\R$ {\it $\alpha$-equivariant} if 
$$f(\ol x+k)=f(\ol x)+\langle \alpha, k\rangle$$
for all $\ol x\in \ol m$ and $k\in H_1(M,\Z)$, where ``$+$'' denotes the action of the first homology by the deck transformations, see Section \ref{rieba}, and $\langle.,.\rangle$ the dual 
pairing of homology and cohomology.

Define the {\it dual stable time separation}
$$\mathfrak{l}^\ast\colon \T^\ast\to\R,\;\alpha\mapsto\inf\{\alpha(h)|\;\mathfrak{l}(h)=1\}.$$ 

\begin{definition}\label{D20-}
Let $\alpha \in (\T^\ast)^\circ$. A function $\tau\colon \overline{M}\rightarrow \R$ is a calibration
representing $\alpha$ if $\tau$ is an $\alpha$-equivariant Lipschitz continuous $\mathfrak{l}^\ast(\alpha)$-pseudo 
time function.
\end{definition}

\begin{theorem}\label{P20-}
Let $\omega\in \alpha\in(\T^\ast)^\circ$ and $F\colon \overline{M}\to \R$ be a primitive of 
$\overline{\pi}^\ast \omega$. Then the function 
$$\tau_\omega\colon \overline{M}\to \R,\; \ol x\mapsto \liminf_{\substack{\ol y\in J^+(\ol x),\\ \ol\dist(\ol x,\ol y)\to\infty}}
  [F(\ol y)-\mathfrak{l}^\ast(\alpha)\, d(\ol x,\ol y)]$$ 
is a calibration representing $\alpha$.
\end{theorem}

It is well known that for a compact Riemannian manifold $(M,g_R)$ the dual stable norm coincides on $H^1(M,\R)$ with the co-mass norm $\|\alpha\|^*:=
\inf\{\|\omega\|_\infty\,|\,\omega \in \alpha\}$, see \cite{ba2}. This poses the question: Is the analogous result true for the stable time separation and the 
dual time separation? We give a positive answer to this question on $(\T^\ast)^\circ$ and discuss why in general it is not possible to extend the result to 
$\partial \T^\ast$.

Define for a co-vector $\iota$ the following function
$$|\iota|^g:=\begin{cases} \sqrt{|g(\iota^\sharp,\iota^\sharp)|},&\text{ if }-\iota^\sharp\text{ is future-pointing},\\
   -\infty,&\text{ else.}\end{cases}$$
The Cauchy-Schwarz inequality for Lorentzian inner products reformulates to 
$$|\iota(v)|\ge |\iota|^g |v|_g$$
whenever $v$ is future-pointing  with $|v|_g:=\sqrt{|g(v,v)|}$. 

\begin{definition}
For $\omega\in \Lambda^1(T^\ast M)$ define
$$l_\infty(\omega):=\min\{|\omega_p|^g|\; p\in M\}\in \R_{\ge 0}\cup \{-\infty\}.$$    
Forms $\omega$ with $l_\infty(\omega)>-\infty$ will be called {\it future-pointing}.
\end{definition}

With the Cauchy-Schwarz inequality we have
$$\left|\int_a^b\omega_{\gamma(t)}(\dot{\gamma}(t))dt\right|\ge l_\infty(\omega)\,L^g(\gamma)$$ 
for any future-pointing curve $\gamma\colon [a,b]\to M$. This ensures that the function 
$$l'\colon H^1(M,\R)\to \R\cup\{-\infty\},\; \alpha\mapsto\sup\{l_\infty(\omega)|\; \omega\in\alpha\}.$$
is well defined. Note that $|\alpha(k)|\ge l'(\alpha)\,d(\ol x,\ol x+k)$ for every $k\in \mathcal{D}(\ol M,M)$ and any $\ol x\in\overline{M}$, see Section \ref{rieba} for definitions. 
Here we have set $0\cdot (-\infty):=0$. It is clear that $l'(\alpha)>0$ if and only if $\alpha$ contains a representative $\omega\in \Lambda^1(T^\ast M)$ such that 
$-\omega^\sharp$ is future-pointing timelike everywhere. The pullback of $\omega$ to $\overline{M}$ is the differential of an $\alpha$-equivariant temporal function. The 
cohomology classes giving rise to an $\alpha$-equivariant temporal function are described in Theorem \ref{stab2} (iii) by the property $\alpha^{-1}(0)\cap 
\T=\{0\}$. We thus obtain that $l'(\alpha)>0$ if and only if $\alpha\in (\T^\ast)^\circ$. 

\begin{theorem}\label{P20}
Let $(M,g)$ be of class A. Then $l'$ coincides with the dual function of $\mathfrak{l}$ on $(\T^\ast)^\circ$, i.e. 
$l'(\alpha)=\mathfrak{l}^\ast(\alpha)$ for all $\alpha\in(\T^\ast)^\circ$. 
\end{theorem} 

\begin{exmpl}
It is easy to construct examples of class A metrics on the $2$-torus for which the dual function of $\mathfrak{l}$ does not coincide with $l'$ on $\partial \T$. More 
precisely these metrics satisfy $l'(\alpha)=-\infty$ for $\alpha \in \partial \T^\ast\setminus \{0\}$. 

Consider $\R^2$ with the standard coordinates $\{x,y\}$ and standard basis $\{e_1,e_2\}$. Choose a $\Z^2$-invariant Lorentzian metric $\overline{g}$ on $\R^2$ 
such that $\overline{X}:=-\sin^2(\pi x)\partial_x +\partial_y$ and $\partial_x +\sin^2(\pi y)\partial_y$ are lightlike 
and $\partial_x+\partial_y$ is causal. Fix the time-orientation of $\overline{g}$ such that $\partial_x+\partial_y$ is future-pointing. Finally choose
the standard scalar product on $\R^2$ as Riemannian background metric.

$(\R^2,\overline{g})$ induces a class A spacetime structure on $T^2:=\R^2/\Z^2$, see \cite{suh102}. We have $\T=\pos\{e_1,e_2\}$. 
Assume that $l'(\alpha)\ge 0$ for some $0\neq \alpha\in\partial\T^\ast=\pos(\{e^\ast_1\}\cup\{e^\ast_2\})$. Since $\T^\ast$ 
is a cone, we can assume $\alpha=e^\ast_1$. The other case $\alpha\sim e_2^\ast$ follows when exchanging coordinates. 
Choose $\omega\in\alpha$ with $l_\infty(\omega)\ge 0$.

Denote with $X$ the vector field induced by $\overline{X}$ on $T^2$ and its flow with $\Psi$. Choose a point 
$p\in T^2$ such that $x(\overline{p})\notin \Z$ for one (hence every) lift $\overline{p}$ of $p$ to $\R^2$. Then we have
$$\dist(\Psi(p,n),\Psi(p,-n))\to 0\text{ for }n\to \infty$$
and $\int_{-n}^n \omega (X(\Psi(p,t)))dt\ge 0$. Denote with $\gamma_n$ the shortest Riemannian geodesic connecting $\Psi(p,n)$ with $\Psi(p,-n)$. 
The curve $\zeta_n:=\Psi(p,.)|_{[-n,n]}\ast \gamma$ represents the homology class $2n e_2-e_1$. Thus we have $\int_{\zeta_n} \omega =-1$. Since 
$$\int_{\gamma_n}\omega \le \|\omega\|_\infty \dist(\Psi(p,n),\Psi(p,-n)),$$ 
we obtain a contradiction for $n$ sufficiently large.
\end{exmpl}

\subsection{Maximizers and Calibrated Curves}\label{S3.4}

Consider a $g_R$-arclength parameterized $C^1$-curve $\gamma \colon \R\to M$, the continuous tangent curve 
$\dot\gamma \colon \R\to T^{1}M$ and a finite Borel measure $\mu$ on $T^1M$. We call $\mu$ a {\it limit measure} 
of $\dot{\gamma}$ (or of $\gamma$) if there exist a sequence of closed intervals $\{[a_i,b_i]\}_{i\in \mathbb N}$ 
with $b_i-a_i$ diverging to $\infty$ and a $C>0$, such that $\frac{C}{b_i-a_i}\dot{\gamma}_\sharp(\mathcal{L}^1|_{[a_i,b_i]})$
converges to $\mu$ in the weak-$*$ topology, where $\mathcal{L}^1$ denote the Lebesgue measure on $\R$. 
Note that the set of limit measures $\mu$ of a curve $\gamma$ with $\mu(T^1M)\le C$ is weak-$\ast$ compact for all $C>0$.

For $\alpha\in \T^\ast$ we denote with 
$$\mathfrak{M}_\alpha$$ 
the set of invariant measures which maximize 
$$\LF_\alpha\colon \mathfrak{M}_g\to\R,\; \mu\mapsto \mathfrak{l}^\ast(\alpha)\mathfrak{L}(\mu)-\langle\alpha, \rho(\mu)\rangle.$$
Define 
$$supp\, \mathfrak M_\alpha:=\cup_{\mu\in \mathfrak{M}_\alpha}supp \mu.$$

Call a future-pointing maximizer $\gamma\colon \R\to M$ a {\it $\T^\circ$-maximizer} if there exist $\lambda_1,\ldots 
\lambda_{b+1}\ge 0$ and limit measures $\mu_1,\ldots ,\mu_{b+1}$ of $\gamma$ such that $\rho(\sum \lambda_i \mu_i)
\in \T^\circ$.

\begin{prop}\label{P22}
Let $(M,g)$ be of class A and $\gamma\colon \R\to M$ be a $\T^\circ$-maximizer. Then there exists an $\alpha\in \T^\ast$ such that all limit measures of 
$\gamma$ maximize $\LF_\alpha$.
\end{prop}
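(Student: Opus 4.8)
The plan is to produce a single cohomology class $\alpha\in\T^\ast$ that simultaneously calibrates \emph{every} limit measure of $\gamma$, and the natural candidate is a supporting hyperplane to the subgraph of $\mathfrak l$ at the point determined by the $\T^\circ$-combination given in the definition of a $\T^\circ$-maximizer. First I would take the limit measures $\mu_1,\dots,\mu_{b+1}$ and coefficients $\lambda_1,\dots,\lambda_{b+1}\ge 0$ with $h:=\rho(\sum\lambda_i\mu_i)\in\T^\circ$, and set $\mu:=\sum\lambda_i\mu_i$. Since each $\mu_i$ is a limit measure of a maximizer $\gamma$, a vague-limit argument (using upper semicontinuity of $L^g$, Proposition \ref{P-2}, together with Corollary \ref{prop1} and the definition of $\mathfrak l$ as the supremum of $\LF$ over a fixed rotation class) shows $\LF(\mu_i)=\mathfrak l(\rho(\mu_i))$, i.e. each $\mu_i$ is already a maximal measure; hence by concavity and positive homogeneity of $\mathfrak l$ (Theorem \ref{T17}(2),(3)) we also get $\LF(\mu)=\mathfrak l(h)$ with $h\in\T^\circ$.

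Next I would choose $\alpha\in\T^\ast$ to be a supporting linear functional of the concave, positively homogeneous function $\mathfrak l$ at $h$: since $h\in\T^\circ$ and $\mathfrak l$ is finite and concave there, such an $\alpha$ exists with $\alpha(h)=\mathfrak l^\ast(\alpha)\mathfrak l(h)$ and $\alpha(k)\ge \mathfrak l^\ast(\alpha)\mathfrak l(k)$ for all $k\in\T$; normalizing, we may take $\alpha$ so that $\mathfrak l^\ast(\alpha)$ equals the relevant constant, and the defining inequality of $\LF_\alpha$ gives $\LF_\alpha(\nu)=\mathfrak l^\ast(\alpha)\LF(\nu)-\alpha(\rho(\nu))\le 0$ for all $\nu\in\mathfrak M_g$, with equality precisely on measures whose rotation vector lies on the contact face of $\mathfrak l$ cut out by $\alpha$ \emph{and} which are maximal. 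By the previous paragraph, $\mu$ attains equality, hence $\mu\in\mathfrak M_\alpha$; I would then need to propagate this to each individual $\mu_i$.

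The propagation step is where I expect the real work. Because $\mathfrak l^\ast(\alpha)\LF-\alpha\circ\rho$ is a concave functional on the convex set $\mathfrak M_g$ (it is affine in $\mu$, as both $\LF$ and $\rho$ are), and $\mu=\sum\lambda_i\mu_i$ maximizes it with value $0$, each $\mu_i$ with $\lambda_i>0$ must also achieve the maximum value $0$; the only subtlety is the $\mu_i$ with $\lambda_i=0$, for which I would instead argue directly. Here the point is that an arbitrary limit measure $\nu$ of the \emph{maximizer} $\gamma$ is itself maximal (same upper-semicontinuity argument as in the first paragraph, applied to the subarcs realizing $\nu$), so $\mathfrak l^\ast(\alpha)\LF(\nu)=\mathfrak l^\ast(\alpha)\mathfrak l(\rho(\nu))\ge\alpha(\rho(\nu))$ automatically gives $\LF_\alpha(\nu)\ge 0$, while $\nu\in\mathfrak M_g$ forces $\LF_\alpha(\nu)\le 0$; hence $\LF_\alpha(\nu)=0$, i.e. $\nu\in\mathfrak M_\alpha$. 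This shows \emph{all} limit measures of $\gamma$ maximize $\LF_\alpha$.

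The main obstacle, then, is the first claim — that every limit measure of a maximizer is a maximal measure. I would handle it by unwinding the definition of limit measure: given intervals $[a_i,b_i]$ with $b_i-a_i\to\infty$ and $\tfrac{C}{b_i-a_i}\dot\gamma_\sharp(\mathcal L^1|_{[a_i,b_i]})\rightharpoonup\nu$, one has $\rho(\nu)=\lim \tfrac{C}{b_i-a_i}(\gamma(b_i)-\gamma(a_i))$ and $\LF(\nu)=\lim\tfrac{C}{b_i-a_i}L^g(\gamma|_{[a_i,b_i]})$ (the latter requires care, since $\sqrt{|g(\cdot,\cdot)|}$ is only continuous, not bounded below, near the light cone — but vague convergence of the pushforwards plus the maximizer property, via Corollary \ref{1.10} and Theorem \ref{T16}, controls this); then $\gamma$ being a maximizer gives $L^g(\gamma|_{[a_i,b_i]})=d(\gamma(a_i),\gamma(b_i))$, and Theorem \ref{T17}(1) identifies the limit with $\mathfrak l(\rho(\nu))$ when $\rho(\nu)\in\T^\circ$, the boundary case being covered by Theorem \ref{T17}(4) and the counterexample remark showing it may be handled separately. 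Combining, $\LF(\nu)=\mathfrak l(\rho(\nu))$, as needed.
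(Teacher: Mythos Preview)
Your proposal has two genuine gaps that together account for essentially the entire difficulty of the proposition.

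\textbf{First gap: limit measures of maximizers need not be maximal.} Your whole argument rests on the claim that an arbitrary limit measure $\nu$ of $\gamma$ satisfies $\LF(\nu)=\mathfrak l(\rho(\nu))$, which you justify via $L^g(\gamma|_{[a_i,b_i]})=d(\gamma(a_i),\gamma(b_i))$ and Corollary~\ref{prop1}. But Corollary~\ref{prop1} requires $\rho(\nu)\in\T^\circ$, and the remark immediately after it says explicitly that the statement \emph{fails} on $\partial\T$ when $\mathfrak l|_{\partial\T}>0$ (Hedlund examples). A $\T^\circ$-maximizer can perfectly well have individual limit measures with rotation vector on $\partial\T$; the assumption only guarantees that a particular convex combination lands in $\T^\circ$. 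Your sentence ``the boundary case being covered by Theorem~\ref{T17}(4) and the counterexample remark'' is not an argument --- Theorem~\ref{T17}(4) is merely the definition of $\mathfrak l$ on $\partial\T$, and the counterexample remark works \emph{against} you, not for you.

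\textbf{Second gap: the inequality in your step 4 is reversed.} Even granting that $\nu$ is maximal, you write $\mathfrak l^\ast(\alpha)\mathfrak l(\rho(\nu))\ge\alpha(\rho(\nu))$ and conclude $\LF_\alpha(\nu)\ge 0$. But by definition $\mathfrak l^\ast(\alpha)=\inf\{\alpha(h):\mathfrak l(h)=1\}$, so $\alpha(\rho(\nu))\ge\mathfrak l^\ast(\alpha)\mathfrak l(\rho(\nu))$, which gives $\LF_\alpha(\nu)\le 0$ --- the same inequality every $\nu\in\mathfrak M_g$ already satisfies. You get no information. What is actually needed is that $\rho(\nu)$ lies on the contact face of $\alpha$, and you have no argument for this; your choice of $\alpha$ was made only to support $\mathfrak l$ at the single point $h$.

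The paper's proof bypasses both issues at once. It shows directly that the convex hull $\Sigma_\gamma$ of all pairs $(\rho(\mu),\LF(\mu))$ lies in $\graph\,\mathfrak l$ --- which is exactly the joint statement ``every limit measure is maximal \emph{and} all rotation vectors lie on one face''. This is done by contradiction with a cut-and-paste argument (following Mather): if some $(h,z)\in\Sigma_\gamma$ had $z<\mathfrak l(h)$ with $h\in\T^\circ$, one finds many disjoint subarcs of $\gamma$ whose average length is close to $z$ and whose average rotation vector is close to $h$, replaces them by maximal segments between suitably translated endpoints (using Proposition~\ref{3.2} to ensure causal connectability), and produces a competitor strictly longer than $\gamma$. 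The $\T^\circ$-hypothesis is used precisely to push $h$ into $\T^\circ$ so that Theorem~\ref{T17}(1) and Proposition~\ref{3.2} apply to the reconstructed segments --- not to the individual limit measures.
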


The assumptions pose some restriction on the maximizers under consideration. 
There are cases when even though all limit measures maximize $\LF$, no $\alpha\in \T^\ast$ exists to satisfy the conclusion of Proposition 
\ref{P22}. For example, consider the flat torus $(T^n,\langle .,.\rangle_1)$ and a lightlike pregeodesic $\gamma\colon \R\to T^n$ therein. $\gamma$ is obviously 
a maximizer. But there exists no $\alpha\in \T^\ast$ such that any limit measure of $\gamma$ maximizes $\LF_\alpha$. Nonetheless it is interesting to consider 
the problem for maximizers whose limit measures have rotation vectors solely contained in the boundary of $\T$. In this case, we have to restrict our 
considerations to faces of $\T$.

For a maximizer $\gamma\colon \R\to M$ consider the convex hull of all rotation vectors of limit measures of $\gamma$. Denote with $F_\gamma$ the unique 
face of $\T$ of minimal dimension such that the convex hull of the rotation vectors of all limit measures of $\gamma$ belong to $F_\gamma$. Then we can use the method of proof 
for Proposition \ref{P22} and the Theorem of Hahn-Banach to obtain the following result.
\begin{prop}
Let $\gamma\colon\R\to M$ be a maximizer. Then there exists $\alpha\in \T^\ast$ such that all limit measures of $\gamma$ maximize $\LF_\alpha|_{\rho^{-1}(F_\gamma)}$ 
if and only if all convex combinations of limit measures of $\gamma$ maximize $\mathfrak{L}$ in their homology class. 
\end{prop}

Another notable consequence of Proposition \ref{P22} and the fact that $\mathfrak{l}$ is positive on $\T^\circ$ is the following Corollary.

\begin{cor}\label{C10}
Let $(M,g)$ be of class A and $\gamma\colon \R \to M$ a maximizer. Then there exists $\alpha \in \T^\ast$ such that every limit measure $\mu$ of $\gamma$ with 
vanishing average length is contained in $ker(\alpha)$.
\end{cor}

Proposition \ref{P22} does not give information whether the pregeodesics in the support of one of the ergodic measures given by Theorem \ref{P10} are 
lightlike or timelike. By the positivity of $\mathfrak{l}|_{\T^\circ}$ we know that there has to be at least one invariant measure $\mu$ whose support intersects the timelike vectors. 

This raises question: Does there exist an invariant measure which is supported entirely 
in the timelike tangent vectors and if so, how many different ergodic measure of this kind do necessarily exist?
For maximizers this is equivalent to asking if there exists a sequence of tangents converging towards the 
light cones. In the geodesic parameterization of the timelike maximizers, 
this question is equivalent to asking whether the tangents are bounded in $TM$. Note that boundedness of the 
tangents is strictly stronger than completeness of the geodesics. An example of a complete maximal geodesic with 
unbounded tangents can be constructed from \cite[Theorem 8.1]{sa1}.

\begin{definition}\label{D19}
Let $\alpha\in(\T^\ast)^\circ$ and $\tau\colon \overline{M}\to\R$ be a calibration representing $\alpha$. A pregeodesic $\gamma\colon \R\rightarrow M$ is calibrated 
by the calibration $\tau$ if 
$$\tau(\overline{\gamma}(t))-\tau(\overline{\gamma}(s))=\mathfrak{l}^\ast(\alpha)L^g(\gamma|_{[s,t]})$$ 
for one (hence every) lift $\overline{\gamma}\colon \R\rightarrow \overline{M}$ of $\gamma$ and all $s<t\in \R$.
\end{definition}

For convenience of notation define for a calibration 
$\tau\colon \overline{M}\to\R$ the set
$$\mathfrak{V}(\tau):=\{v\in T^1M \text{ future-pointing }|\; \gamma_v \text{ is calibrated by }\tau\}$$
where $\gamma_v\colon \R\to M$ denotes the unique pregeodesic with $\gamma_v'(0)=v$. The definition has the 
following immediate consequence:

\begin{cor}\label{P19}
Let $\tau\colon \overline{M}\to\R$ be a calibration representing $\alpha\in (\T^\ast)^\circ$. Then the pregeodesic 
$\gamma_v$ is a maximizer for any $v\in \mathfrak{V}(\tau)$.
\end{cor}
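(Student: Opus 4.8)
The plan is to unwind the two definitions and chain together the calibration inequality with the pseudo-time-function inequality. Fix $v\in\mathfrak V(\tau)$ and a lift $\overline\gamma\colon\R\to\overline M$ of $\gamma_v$. Let $s<t$ and let $\sigma\colon[s,t]\to\overline M$ be an arbitrary future pointing curve with $\sigma(s)=\overline\gamma(s)$ and $\sigma(t)=\overline\gamma(t)$; we must show $L^g(\sigma)\le L^g(\gamma_v|_{[s,t]})$. First I would cover $[s,t]$ by finitely many subintervals $[s_0,s_1],\dots,[s_{N-1},s_N]$ (with $s_0=s$, $s_N=t$) so fine that each $\sigma(s_{j})$ and $\sigma(s_{j+1})$ lie in a common convex normal neighborhood $U_j$ of $\overline M$ with $\sigma(s_{j+1})\in J^+_{U_j}(\sigma(s_j))$; this is possible because $\sigma$ is future pointing and of finite $g_R$-length, so it is locally causal in charts. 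On each such piece the $\mathfrak l^\ast(\alpha)$-pseudo-time-function property of $\tau$ (Definition \ref{D20-}) gives $\tau(\sigma(s_{j+1}))-\tau(\sigma(s_j))\ge \mathfrak l^\ast(\alpha)\,d(\sigma(s_j),\sigma(s_{j+1}))\ge \mathfrak l^\ast(\alpha)\,L^g(\sigma|_{[s_j,s_{j+1}]})$, the last step because $d$ is the supremum of $L^g$ over causal curves in $M'$ and hence dominates the length of the subarc of $\sigma$, which is itself a causal curve joining the two points.

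Summing over $j$ the telescoping left-hand side collapses to $\tau(\sigma(t))-\tau(\sigma(s))=\tau(\overline\gamma(t))-\tau(\overline\gamma(s))$, so we obtain
$$\tau(\overline\gamma(t))-\tau(\overline\gamma(s))\ \ge\ \mathfrak l^\ast(\alpha)\,L^g(\sigma).$$
On the other hand the calibration hypothesis $v\in\mathfrak V(\tau)$ says precisely that $\tau(\overline\gamma(t))-\tau(\overline\gamma(s))=\mathfrak l^\ast(\alpha)\,L^g(\gamma_v|_{[s,t]})$. Since $\alpha\in(\T^\ast)^\circ$ forces $\mathfrak l^\ast(\alpha)>0$ (by Proposition \ref{P20} together with $\mathfrak l>0$ on $\T^\circ$, or directly from the definition of $\mathfrak l^\ast$), we may divide and conclude $L^g(\gamma_v|_{[s,t]})\ge L^g(\sigma)$. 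As $\sigma$ was an arbitrary future pointing curve with the same endpoints as $\gamma_v|_{[s,t]}$, this shows $\gamma_v|_{[s,t]}$ is a maximizer in $\overline M$ for every finite interval, i.e.\ $\gamma_v$ is a maximizer; by the convention on maximizers introduced before Remark \ref{R10a} the same holds for $\gamma_v$ viewed as a curve in $M$.

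The only genuinely delicate point is the passage, in the displayed chain, from the pseudo-time-function inequality (which is a priori only local, valid for $q'\in J^+_{U}(p')$ inside a convex normal neighborhood) to a bound along the whole competitor curve $\sigma$; this is exactly why I split $\sigma$ into short pieces. One must check that a future pointing $C^1$ (or merely causal, Lipschitz) curve of finite $g_R$-length can indeed be partitioned so that consecutive breakpoints lie in a common convex normal chart in the required causal position — this is a standard compactness argument (cover the compact image $\sigma([s,t])$ by finitely many convex normal neighborhoods, use a Lebesgue number for the pullback metric, and use that $\sigma$ stays causal). I expect no further obstacle: everything else is the tautological rewriting of Definitions \ref{D20-} and \ref{D19} plus positivity of $\mathfrak l^\ast(\alpha)$.
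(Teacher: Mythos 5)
Your proof is correct and fills in exactly the routine chaining argument that the paper leaves implicit: it labels this an ``immediate consequence'' of Definitions \ref{D20-} and \ref{D19} and gives no proof at all. Your passage from the local pseudo-time-function inequality to the bound $\tau(\overline\gamma(t))-\tau(\overline\gamma(s))\ge\mathfrak l^\ast(\alpha)\,L^g(\sigma)$ along an arbitrary causal competitor $\sigma$, combined with the calibration identity and $\mathfrak l^\ast(\alpha)>0$, is precisely the intended argument.
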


\begin{prop}\label{P20a}
Let $\alpha\in (\T^\ast)^\circ$ and $\tau\colon \overline{M}\to \R$ be a calibration represen\-ting $\alpha$.
Then we have $\supp\mathfrak{M}_{\alpha}\subset \mathfrak{V}(\tau)$, i.e. for any $\mu \in 
\mathfrak{M}_{\alpha}$ and any $v\in \supp\mu$ the pregeodesic $\gamma_v$ is calibrated by any 
calibration representing $\alpha$. The set $\mathfrak{V}(\tau)$ is in particular not empty.
\end{prop}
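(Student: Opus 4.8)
The plan is to extract the calibration property from a single integral identity for invariant measures. First I would record that $\max_{\mathfrak{M}_g}\LF_\alpha=0$: for any $\mu\in\mathfrak{M}_g$ we have $\LF(\mu)\le\mathfrak{l}(\rho(\mu))$ by the variational formula for $\mathfrak{l}$, and, since $\mathfrak{l}^\ast(\alpha)>0$ by Proposition \ref{P20}, also $\alpha(\rho(\mu))\ge\mathfrak{l}^\ast(\alpha)\mathfrak{l}(\rho(\mu))$; hence $\LF_\alpha(\mu)=\mathfrak{l}^\ast(\alpha)\LF(\mu)-\alpha(\rho(\mu))\le 0$, with equality for the maximal measure (Corollary \ref{C12}) whose rotation vector $h$ realises $\mathfrak{l}(h)=1$ and $\alpha(h)=\mathfrak{l}^\ast(\alpha)$. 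Thus $\mathfrak{M}_\alpha=\LF_\alpha^{-1}(0)$ is nonempty, and once $\supp\mathfrak{M}_\alpha\subset\mathfrak{V}(\tau)$ is proved, nonemptiness of $\mathfrak{V}(\tau)$ is immediate.

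Next I would introduce a defect cocycle on $T^{1,R}M$. For $v$ future pointing set
\[
h(v):=\tau(\overline{\gamma}_v(1))-\tau(\overline{\gamma}_v(0))-\mathfrak{l}^\ast(\alpha)\,L^g(\gamma_v|_{[0,1]}),
\]
with $\overline{\gamma}_v$ any lift of $\gamma_v$ to $\overline{M}$. Because $\tau$ is $\alpha$-equivariant this is independent of the chosen lift, and $h$ is continuous (completeness and smoothness of $\Phi$, continuity of $\tau$ and of $g$). Chopping $\gamma_v|_{[0,1]}$ into sub-arcs contained in convex normal neighbourhoods and using the $\mathfrak{l}^\ast(\alpha)$-pseudo time function inequality together with $d\ge L^g$ on each piece yields $h(v)\ge 0$. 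The same equivariance computation gives $h(\Phi(v,s))=\tau(\overline{\gamma}_v(s+1))-\tau(\overline{\gamma}_v(s))-\mathfrak{l}^\ast(\alpha)L^g(\gamma_v|_{[s,s+1]})$ for all $s$, so $\sum_{k=0}^{n-1}h(\Phi(v,k))=\tau(\overline{\gamma}_v(n))-\tau(\overline{\gamma}_v(0))-\mathfrak{l}^\ast(\alpha)L^g(\gamma_v|_{[0,n]})$.

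Then I would integrate against $\mu\in\mathfrak{M}_g$. Using $\Phi$-invariance of $\mu$ and the decomposition $\tau=\sigma+\phi$ of the proof of Proposition \ref{P20+} (with $\sigma$ smooth $\alpha$-equivariant, $d\sigma$ descending to a closed $1$-form $\omega_\sigma$ on $M$ representing $\alpha$, and $\phi$ descending to a Lipschitz $\phi'$ on the compact $M$), one has $\frac1n\int(\sigma(\overline{\gamma}_v(n))-\sigma(\overline{\gamma}_v(0)))\,d\mu=\int\omega_\sigma\,d\mu=\alpha(\rho(\mu))$ by Fubini and Definition \ref{D33}, while $\frac1n\int(\phi'(\gamma_v(n))-\phi'(\gamma_v(0)))\,d\mu$ is bounded by $2\|\phi'\|_\infty\mu(T^{1,R}M)/n\to0$, and $\frac1n\int L^g(\gamma_v|_{[0,n]})\,d\mu=\int\sqrt{-g(v,v)}\,d\mu=\LF(\mu)$ by invariance. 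Dividing the last identity of the previous paragraph by $n$, integrating, and letting $n\to\infty$ gives $\int h\,d\mu=\alpha(\rho(\mu))-\mathfrak{l}^\ast(\alpha)\LF(\mu)=-\LF_\alpha(\mu)$, which vanishes exactly when $\mu\in\mathfrak{M}_\alpha$.

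To conclude, fix $\mu\in\mathfrak{M}_\alpha$: since $h\ge0$ on the future pointing vectors, $\supp\mu$ consists of such vectors, and $\int h\,d\mu=0$, the continuous function $h$ vanishes on all of $\supp\mu$ (a nonempty relatively open subset of $\supp\mu$ has positive $\mu$-measure). As $\supp\mu$ is $\Phi$-invariant, $h(\Phi(v,s))=0$ for every $s\in\R$ and $v\in\supp\mu$; the cocycle identity then shows that the defect $\tau(\overline{\gamma}_v(t))-\tau(\overline{\gamma}_v(s))-\mathfrak{l}^\ast(\alpha)L^g(\gamma_v|_{[s,t]})$ is additive in $(s,t)$, nonnegative, and vanishes on every interval of length one, hence vanishes for all $s<t$; that is, $\gamma_v$ is calibrated by $\tau$. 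This gives $\supp\mathfrak{M}_\alpha\subset\mathfrak{V}(\tau)$, and $\mathfrak{V}(\tau)\neq\emptyset$ follows from $\mathfrak{M}_\alpha\neq\emptyset$. I expect the only delicate points to be the equivariance bookkeeping making $h$ and its cocycle relation well defined on $T^{1,R}M$, and the final upgrade from unit intervals to arbitrary intervals; neither is a real obstacle once the identity $\int h\,d\mu=-\LF_\alpha(\mu)$ is in place, which is the heart of the argument.
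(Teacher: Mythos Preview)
Your proof is correct and follows essentially the same strategy as the paper's: show that the nonnegative defect $\tau(\overline{\gamma}_v(t))-\tau(\overline{\gamma}_v(s))-\mathfrak{l}^\ast(\alpha)L^g(\gamma_v|_{[s,t]})$ has vanishing $\mu$-integral for $\mu\in\mathfrak{M}_\alpha$, hence vanishes on $\supp\mu$ by continuity. The only cosmetic differences are that the paper integrates the partial differential $\omega_\tau$ directly and invokes Lemma~\ref{L0c} to kill the Lipschitz remainder $\phi'$ (obtaining the identity for arbitrary $[s,t]$ in one stroke), whereas you fix the interval length at one, replace Lemma~\ref{L0c} by the averaging trick $\|\phi'\|_\infty/n\to0$, and then pass from unit intervals to arbitrary ones via additivity and nonnegativity.
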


Denote with $[g]$ the global conformal class of the Lorentzian metric $g$ sharing the same time orientation. Define the set
$$\Light(M,[g]):=\{\text{future-pointing lightlike tangent vectors of $(M,g)$}\}.$$

\begin{theorem}\label{P20+}
Let $(M,g)$ be a compact spacetime, $\alpha\in (\T^\ast)^\circ$ and $\tau\colon\overline{M}\to \R$ be a calibration
representing $\alpha$. Further let $\gamma\colon \R\to M$ be a future-pointing maximizer calibrated by $\tau$.
Then all limit measures of $\gamma$ belong to $\mathfrak{M}_{\alpha}$. Moreover the image 
of the tangential mapping $t\mapsto \dot{\gamma}(t)$ can be separated from $\Light(M,[g])$, i.e. there exists 
$\e=\e(\alpha)>0$ such that 
$$\dist(\dot{\gamma}(t),\Light(M,[g])\ge \e$$ 
for all $t\in \R$.
 \end{theorem}

\begin{cor}\label{P21}
Let $(M,g)$ be of class A. Then there exists a maximal ergodic measure  $\mu$ and $\e>0$ such that 
$$\dist (\supp \mu ,Light(M,[g]))\ge \e.$$
\end{cor}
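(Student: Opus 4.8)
The plan is to extract the desired measure from a calibration. Since $(M,g)$ is of class A, Theorem \ref{stab2}(iii) gives $(\T^\ast)^\circ\neq\emptyset$, so I would fix any $\alpha\in(\T^\ast)^\circ$ and, by Proposition \ref{P20-}, a calibration $\tau\colon\overline M\to\R$ representing $\alpha$. By Proposition \ref{P20a}, $\mathfrak{V}(\tau)\neq\emptyset$ and $\supp\mathfrak{M}_\alpha\subseteq\mathfrak{V}(\tau)$. The key point is that the separation constant produced by Proposition \ref{P20+} depends only on $\alpha$: for every $v\in\mathfrak{V}(\tau)$ the pregeodesic $\gamma_v$ is, by Corollary \ref{P19}, a future pointing maximizer calibrated by $\tau$, and Proposition \ref{P20+} then yields a single $\e=\e(\alpha)>0$ with $\dist(\dot\gamma_v(t),\Light(M,[g]))\ge\e$ for all $t$; taking $t=0$ gives $\dist(v,\Light(M,[g]))\ge\e$. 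Hence $\dist(w,\Light(M,[g]))\ge\e$ for every $w\in\mathfrak{V}(\tau)$, and in particular $\dist(\supp\mu,\Light(M,[g]))\ge\e$ for every $\mu\in\mathfrak{M}_\alpha$. So any maximal ergodic measure lying in $\mathfrak{M}_\alpha$ will do.

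Next I would produce such a measure. First, $\mathfrak{M}_\alpha\neq\emptyset$: picking $v_0\in\mathfrak{V}(\tau)$, a limit measure of the calibrated maximizer $\gamma_{v_0}$ is a probability measure in $\mathfrak{M}^1_g$ which, by Proposition \ref{P20+}, lies in $\mathfrak{M}_\alpha$. Then I would use that $\LF_\alpha(\mu)=\mathfrak{l}^\ast(\alpha)\LF(\mu)-\alpha(\rho(\mu))$ is linear on the cone $\mathfrak{M}_g$ and nonpositive there (since $\LF(\mu)\le\mathfrak{l}(\rho(\mu))$ and $\mathfrak{l}^\ast(\alpha)\mathfrak{l}(h)\le\alpha(h)$ on $\T$), so that $\mathfrak{M}_\alpha=\{\mu\in\mathfrak{M}_g\mid\LF_\alpha(\mu)=0\}$ is a face of $\mathfrak{M}_g$. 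Since $\LF$ and $\rho$ are weak-$\ast$ continuous, $\mathfrak{M}^1_\alpha:=\mathfrak{M}_\alpha\cap\mathfrak{M}^1_g$ is a nonempty weak-$\ast$ compact convex subset of the compact set $\mathfrak{M}^1_g$, hence has an extremal point $\mu$ by Krein-Milman.

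It then remains to check that $\mu$ is ergodic and maximal. If $\mu=t\nu_0+(1-t)\nu_1$ with $t\in(0,1)$ and $\nu_0,\nu_1\in\mathfrak{M}^1_g$, then $0=\LF_\alpha(\mu)=t\LF_\alpha(\nu_0)+(1-t)\LF_\alpha(\nu_1)$ with $\LF_\alpha(\nu_i)\le0$, forcing $\LF_\alpha(\nu_i)=0$, i.e. $\nu_i\in\mathfrak{M}^1_\alpha$; extremality of $\mu$ in $\mathfrak{M}^1_\alpha$ then gives $\nu_0=\nu_1=\mu$, so $\mu$ is extremal in $\mathfrak{M}^1_g$ and therefore ergodic (as recorded in the proof of Proposition \ref{P10}). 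Finally, $\LF_\alpha(\mu)=0$ together with $\mathfrak{l}^\ast(\alpha)>0$ (which holds since $\alpha\in(\T^\ast)^\circ$, by Proposition \ref{P20}) gives $\mathfrak{l}^\ast(\alpha)\LF(\mu)=\alpha(\rho(\mu))\ge\mathfrak{l}^\ast(\alpha)\mathfrak{l}(\rho(\mu))\ge\mathfrak{l}^\ast(\alpha)\LF(\mu)$, hence $\LF(\mu)=\mathfrak{l}(\rho(\mu))$ and $\mu$ is maximal. Combined with the first paragraph, $\dist(\supp\mu,\Light(M,[g]))\ge\e$, which proves the corollary.

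I do not expect a serious obstacle, since the statement is essentially a corollary of Propositions \ref{P20-}, \ref{P20+} and \ref{P20a}; the two points needing care are (i) that the constant $\e$ in Proposition \ref{P20+} can be taken uniform over all $\tau$-calibrated curves, hence a lower bound for $\dist(\cdot,\Light(M,[g]))$ on all of $\mathfrak{V}(\tau)\supseteq\supp\mathfrak{M}_\alpha$, and (ii) that $\mathfrak{M}_\alpha$ is a face of $\mathfrak{M}_g$, which is precisely what keeps an extremal (hence ergodic) element of $\mathfrak{M}^1_\alpha$ inside $\mathfrak{M}_\alpha$ and therefore maximal. As an alternative to the Krein-Milman step one could apply the ergodic decomposition to the limit measure of $\gamma_{v_0}$ directly: since $\LF_\alpha$ is affine and weak-$\ast$ continuous, almost every ergodic component again annihilates $\LF_\alpha$ and has support contained in $\supp\mu\subseteq\mathfrak{V}(\tau)$.
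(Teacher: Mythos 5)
Your proof is correct, and the route you take to produce the maximal ergodic measure differs from the paper's. The paper chooses an extremal point $(h_j,\mathfrak{l}(h_j))$ with $\mathfrak{l}(h_j)>0$ of the subgraph $\mathcal{K}$ of $\mathfrak{l}|_{\alpha^{-1}(1)\cap\T}$, checks that $\alpha$ supports $\mathfrak{l}$ at $h_j$, and then invokes the argument of Proposition \ref{P10} to get a maximal ergodic measure with rotation vector in $\pos\{h_j\}$, which a posteriori lies in $\mathfrak{M}_\alpha$; only then does it pass through Propositions \ref{P20a} and \ref{P20+}. You instead observe directly that $\LF_\alpha\le 0$ on $\mathfrak{M}_g$, so $\mathfrak{M}_\alpha=\{\LF_\alpha=0\}$ is a face, and apply Krein--Milman to the nonempty compact convex set $\mathfrak{M}^1_\alpha$ to obtain an ergodic extremal point at one stroke; the maximality $\LF(\mu)=\mathfrak{l}(\rho(\mu))$ then drops out of $\LF_\alpha(\mu)=0$ and $\mathfrak{l}^\ast(\alpha)>0$. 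Your approach is shorter and bypasses the Carath\'eodory-type decomposition of the subgraph (which the paper reuses in Proposition \ref{P10} to get a multiplicity count, a feature that is superfluous here), at the price of having to verify nonemptiness of $\mathfrak{M}^1_\alpha$ by producing a limit measure of a calibrated curve; both steps are sound. Your final separation step matches the paper's, and you are right to stress that the constant $\e$ in Proposition \ref{P20+} depends only on $\alpha$ (via the Lipschitz and pseudo-time constants of $\tau$), so that the bound is uniform over $\mathfrak{V}(\tau)\supseteq\supp\mathfrak{M}_\alpha$.
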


\subsection{Graph Theorems}\label{secgr}

We will give several versions of Mather's graph theorem for  Lorentzian manifolds. The following being the most general formulation. 

\begin{theorem}\label{T20}
Let $(M,g)$ be of class A. Then the projection $\pi\colon TM\to M$ restricted to ${\supp\mathfrak M_\alpha}$ is injective for every $\alpha\in \T^\ast$. Moreover there exists 
$K=K(\alpha)<\infty$ such that the inverse of $\pi|_{\supp\mathfrak M_\alpha}$ is $1/2$-H\"older-continuous on $\pi(supp\, \mathfrak M_\alpha)$ 
with constant $\sqrt{K}$, i.e. we have
    $$\dist(\pi^{-1}(x),\pi^{-1}(y))^2\leq K \dist (x,y)$$
for any $x,y\in \pi(\supp\mathfrak{M}_\alpha)$.
\end{theorem}

The theorem follows from \cite[Proposition 3.17]{suhr18} in exactly the same way, \cite[Theorem 2]{ma} follows from the lemma therein.

Define the sets
$$\Time(M,[g]):=\{\text{future-pointing timelike tangent vectors of $(M,g)$}\}$$
and
$$\Time(M,[g])^\e:=\{v\in \Time(M,[g])|\; \dist(v,\Light(M,[g]))\ge \e|v|\}$$
for $\e>0$. 

The H\"older continuity can be strengthened on $\supp\mu\cap \Time(M,[g]$. We have seen in Corollary \ref{P21} that there exists at 
least one maximal measure $\mu$ supported in $\Time(M,[g])$. Therefore the set of tangent vectors addressed in this special case is not empty.

\begin{theorem}\label{C22}
Let $(M,g)$ be of class A. Then for every $\alpha\in \T^\ast$ and every $\kappa >0$ the inverse of $\pi \colon 
supp\; \mathfrak{M}_\alpha \cap \Time(M,[g])^\kappa\to M$ is Lipschitz with constant $K_{\alpha,\kappa}<\infty$, i.e. 
  $$\dist(\pi^{-1}(x),\pi^{-1}(y))\leq K_{\alpha,\kappa} \dist (x,y)$$
for any $x,y\in \pi(\mathfrak{M}_\alpha \cap\Time(M,[g])^\kappa)$.
\end{theorem}

We can further strengthen the claim for $\alpha\in (\T^\ast)^\circ$: With Proposition \ref{P20a} we know that any pregeodesic in $\supp\mathfrak{M}_\alpha$ 
is calibrated 
by every calibration representing $\alpha$. Since every calibrated pregeodesic $\gamma$ is timelike and satisfies $\gamma'\in \Time(M,[g])^\kappa$ for some 
$\kappa=\kappa(\alpha)>0$, we can drop the condition ``$v\in\Time(M,[g])^\kappa$'' for $v\in \supp\mathfrak{M}_\alpha$ in Theorem \ref{C22}. Further we can 
extend the result to all curves calibrated by a calibration representing $\alpha$.

\begin{theorem}\label{T23}
Let $(M,g)$ be of class A. Then for all $\alpha\in(\T^\ast)^\circ$ the restriction $\pi|_{\mathfrak{V}(\tau)}$ 
is injective and there exists $K_\alpha<\infty$ such that the inverse of $\pi|_{\mathfrak{V}(\tau)}$ is 
$K_\alpha$-Lipschitz for all calibrations $\tau$ representing $\alpha$.
\end{theorem}

\section{The Lorentzian Hedlund Examples}\label{hedex}

In \cite{hed1} Hedlund gave an example of a Riemannian $3$-torus to show that his results on closed geodesics in 
Riemannian $2$-tori do not generalize to higher dimensions. Bangert then employed the idea in \cite{ba} to construct 
a class of Riemannian metrics on $3$-tori (called Hedlund examples) to show the optimality of his results. We aim for 
the same goal with our construction.

Consider $\mathbb R^3$ together with the standard basis $\{e_1,e_2,e_3\}$ and let $l_1:=\R\times \{0\}\times \{0\}$, $l_2:=\{0\}\times \R\times
\{\frac{1}{2}\}$ and $l_3:=\{\frac{1}{2}\}\times\{\frac{1}{2}\}\times \R$ (not to be mistaken for the stable time 
separation $\mathfrak{l}$). Set $L_1:=l_1+\Z^3$, $L_2:=l_2+\Z^3$, $L_3:=l_3+\Z^3$ and
$L:=L_1\cup L_2 \cup L_3$. Denote the coordinate functions relative to $\{e_1,e_2,e_3\}$ by $x^1$, $x^2$ and $x^3$. 
Further choose the canonical flat metric $g_R:=\sum dx_i^2$ as the Riemannian background metric on $\R^3$.
Next let $\{v_1:=\frac{1}{\sqrt 3}(1,1,1),v_2,v_3\}$ be a orthonormal basis of $\mathbb R^3$ with 
respect to the standard Euclidian scalar product and let $\{v_1^*,v_2^*,v_3^*\}$ be the dual basis. 
Define for $\lambda_i >0$ ($i\in \{1,2,3\}$) with $\sum\lambda_i=1$ and $\varepsilon \in (0,10^{-2})$ the 
Lorentzian metrics
\begin{align*}
g_\varepsilon&:= -\frac{\varepsilon^2}{4}v_1^*\otimes v_1^*+v_2^*\otimes v_2^*+v_3^*\otimes v_3^*,\\
g_1&:=(\lambda_1)^2\left(-(dx^1)^2+\frac{1}{3}(dx^2)^2+\frac{1}{3}(dx^3)^2\right),\\
g_2&:=(\lambda_2)^2\left(\frac{1}{3}(dx^1)^2-(dx^2)^2+\frac{1}{3}(dx^3)^2\right)\\
\text{and}&\\
g_3&:=(\lambda_3)^2\left(\frac{1}{3}(dx^1)^2+\frac{1}{3}(dx^2)^2-(dx^3)^2\right).
\end{align*}
Consider a $\mathbb Z^3$-invariant Lorentzian metric $\overline{g}$ on $\mathbb R^3$ such that the following three 
conditions are satisfied: 
\begin{enumerate}[(i)]
\item $\overline{g}_p(v,v)\geq g_\varepsilon(v,v)$ for all $(p,v)\in T\mathbb R^3$.
\item $g_{2\e}(v,v)\ge \overline{g}_p(v,v)$ for all $(p,v)\in T\R^3$ with $p\in \mathbb R^3\setminus B_\e(L)$.
\item For $(p,v)\in TB_\e(L_i)$ we have $g_i(v,v)\geq \overline{g}_p(v,v)$ with equality exactly for $p\in L_i$.
\end{enumerate}
$\overline{g}$ naturally induces a Lorentzian metric $g$ on $T^3=\mathbb R^3/\mathbb Z^3$.
By condition (i), $v_1$ is everywhere timelike and thus induces a time oriention on $(T^3,g)$.  $(T^3,g)$ is vicious by (i).
Further $(\mathbb R^3,\overline{g})$ is globally hyperbolic since $v_1^\ast$ is a smooth uniform temporal function on 
$(\R^3,\overline{g})$, i.e. $\nabla^{\overline{g}} v_1^\ast$ is a smooth vector field with $\log \|\nabla^{\overline{g}} v_1^\ast\|$ 
uniformly bounded.

The conditions (i)-(iii) have the following immediate consequences:
\begin{enumerate}
\item The straight lines in $L_i$ are $\overline{g}$-future-pointing timelike maximal geodesics. The $\overline{g}$-length 
of a segment of $L_i$ is exactly $\lambda_i x^i$.
\item For two neighboring lines $l_i$, $l_j$ in $L$, i.e. $\dist(l_i,l_j)=1/2$, the Riemannian length of any causal curve  
connecting $\partial B_\e(l_i)$ with $\partial B_\e(l_j)$ is bounded from above by $\frac{1}{2}-2\e$. 
\end{enumerate}
For the second observation first note that any causal curve in $(\R^3,\overline{g})$ contained in the complement of 
$B_\e(L)$ and connecting two points $p$ and $q$, must be contained in the $\e|q-p|$-neighborhood of the straight 
line segment between $p$ and $q$. Second, the distance of $q-p/|q-p|$ from $\frac{1}{\sqrt{3}}(1,1,1)$ is bounded 
by $2\e$. Now for two given lines $l_i$ and $l_j$ in $L$ with $\dist(l_i,l_j)\le 1/2$, there exists exactly one line segment 
with direction $(1,1,1)$ and endpoints in $l_i\cup l_j$. Now by the previous observations, any causal curve with endpoints
in $B_\e(l_i)\cup B_\e(l_j)$ is contained in the $2\e$-neighborhood of this line segment.

\begin{fact}\label{F30}
Let $p,q \in \mathbb R^3$ and $\gamma\colon I\to \mathbb R^3$ a future-pointing curve
between $p$ and $q$. Then
   $$L^{g_R}(\gamma)\leq 2\left(\sum (q-p)^i +4\varepsilon\right).$$
\end{fact}

\begin{proof}
Assume $\gamma$ to be parameterized by $g_R$-arclength. Set 
$A':=\gamma^{-1}(\mathbb R^3 \setminus B_\e(L))$ and $A_i':=\gamma^{-1}(B_\e(L_i))$.

By (ii) and (iii) we know that $|w|^2\le 4(w^i)^2$ for every causal $w\in T\mathbb R^3_z$ with $z\in 
\mathbb R^3\setminus B_\e(L_j\cup L_k)$ and $\{i,j,k\}=\{1,2,3\}$. Then we have $L^{g_R}(\gamma|A'\cup A'_i)
\leq \max\{2(q^i-p^i),0\}$. 

If $(q-p)^i\leq 0$, there exists a line $l\in L\setminus L_i$ such that $\gamma(I)\subseteq 
B_{6\e}(l)$. Note that for every $z\in \R^3\setminus B_\e(L)$, every future-pointing vector $w\in T\R^3_z$ and every 
$i\in \{1,2,3\}$ we have $w^i\ge (\frac{1}{\sqrt{3}}-\e)|w|$.
But then $x^i(z)\ge x^i(l)+\e$ for any point $z\in \partial B_{6\e}(l)\cap J^+(B_{\e}(l))$. 
By condition (ii), $x^i$ cannot decrease along $\gamma$ near $t$ if $\gamma(t)\notin B_\e(l)$. Therefore 
$(q-p)^i> -2\varepsilon$ and consequently $L^{g_R}(\gamma) \leq 2(q-p)^j$ 
for $l\subset L_j$. In general we obtain
    $$L^{g_R}(\gamma)\leq 2\left(\sum (q-p)^i +4\varepsilon\right).$$
\end{proof}

\begin{prop}
For $(T^3,g)$ as above we have $\mathfrak T=\pos \{e_1,e_2,e_3\}$.
\end{prop}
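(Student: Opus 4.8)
The plan is to identify $\T$ with the closed convex cone $\pos\{e_1,e_2,e_3\}=\{h\in\R^3:h^i\ge0\ \text{for all }i\}$ by checking that the latter has the property that characterises $\T$ in Proposition \ref{P01a}. Since $(T^3,g)$ is compact and vicious, that proposition says $\T$ is the \emph{unique} cone $C$ admitting a constant $\err<\infty$ with $\dist_{\|.\|}(J^+(x)-x,C)\le\err$ for all $x\in\R^3$ (and the stable norm of $g_R$ on $H_1(T^3,\R)=\R^3$ is the Euclidean norm). So it suffices to show that, for some constant independent of everything, $y^i-x^i\ge-\text{const}$ for every $i\in\{1,2,3\}$ whenever there is a future pointing causal curve from $x$ to $y$ in $(\R^3,\overline g)$. (As a check on the opposite inclusion: by consequence $(1)$ of the construction each straight line $l_i\subset L_i$, traversed over one lattice period, is a future pointing timelike loop of homology class $e_i$, so $e_1,e_2,e_3\in\T$, and $\T$ being a convex cone by Fact \ref{F1} we get $\pos\{e_1,e_2,e_3\}\subseteq\T$ for free.)

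Fix $i=1$ (the cases $i=2,3$ are handled in the same way, using that $L_1\cup L_3$, resp.\ the cone conditions near $L_2$, play for $x^2$ the role that $L_2\cup L_3$ and the cone conditions near $L_1$ play for $x^1$), and let $\gamma\colon[a,b]\to\R^3$ be a future pointing causal curve parameterised by $g_R$-arclength. First I record where $\gamma^1$ can fail to be increasing. Conditions (i)--(iii) say the $\overline g$-causal cone at $p$ contains the (very narrow) $g_\varepsilon$-cone everywhere, is contained in the $g_{2\varepsilon}$-cone on $\R^3\setminus B_\varepsilon(L)$, and is contained in the $g_i$-cone on $B_\varepsilon(L_i)$; since $v_1=\tfrac1{\sqrt3}(1,1,1)$ is future pointing for all these metrics, a short computation with the explicit quadratic forms and the frame $\{v_1,v_2,v_3\}$ gives $\dot\gamma^1(t)\ge\tfrac12|\dot\gamma(t)|$ whenever $\gamma(t)\in B_\varepsilon(L_1)$ (a future $g_1$-causal $w$ obeys $4(w^1)^2\ge|w|^2$) and whenever $\gamma(t)\notin B_\varepsilon(L)$ (a future $g_{2\varepsilon}$-causal $w$ has $w^1\ge(\tfrac1{\sqrt3}-\varepsilon)|w|>\tfrac12|w|$, exactly as in the proof of Fact \ref{F30}). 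Thus $\gamma^1$ is non-decreasing except on $\gamma^{-1}(B_\varepsilon(L_2\cup L_3))$; and $B_\varepsilon(L_2\cup L_3)$ is a disjoint union of $\varepsilon$-tubes around lines in directions $e_2$ or $e_3$, on each of which the coordinate $x^1$ ranges over an interval of length $2\varepsilon$.

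Next I must show the decreases of $\gamma^1$ inside these tubes do not accumulate. Group the excursions of $\gamma$ into $B_\varepsilon(L_2\cup L_3)$ into \emph{runs}: a run is a maximal time interval $[\sigma,\tau]$ during which $\gamma$ visits only one fixed such tube $B_\varepsilon(\ell)$ (possibly re-entering it several times, with intermediate sojourns outside $B_\varepsilon(L_2\cup L_3)$, hence in the region where $\dot\gamma^1\ge\tfrac12|\dot\gamma|$). On $B_\varepsilon(\ell)$ the coordinate $x^1$ stays in an interval of length $2\varepsilon$ and on the sojourns $\gamma^1$ is non-decreasing, while $\gamma(\sigma),\gamma(\tau)\in\partial B_\varepsilon(\ell)$; hence $\gamma^1(\tau)-\gamma^1(\sigma)\ge-2\varepsilon$ over each run. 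Between two consecutive runs $\gamma$ passes from the boundary of one tube to the boundary of a \emph{different} one, and distinct lines of $L$ are $\ge\tfrac12$ apart, so $\gamma$ travels $g_R$-distance $\ge\tfrac12-2\varepsilon$ through the region where $\dot\gamma^1\ge\tfrac12|\dot\gamma|$, gaining $\ge\tfrac14-\varepsilon$ in $x^1$; before the first and after the last run $\gamma^1$ is non-decreasing. In particular there are only finitely many runs, say $R$, and adding up the contributions,
$$\gamma^1(b)-\gamma^1(a)\ \ge\ -2\varepsilon R+(R-1)\!\left(\tfrac14-\varepsilon\right)\ \ge\ -2\varepsilon\qquad(R\ge1),$$
while $\gamma^1(b)-\gamma^1(a)\ge0$ if $R=0$; here $\tfrac14-\varepsilon>2\varepsilon$ because $\varepsilon<10^{-2}$. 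Hence $y^1-x^1\ge-2\varepsilon$, and the same for the other two coordinates, so $\dist_{\|.\|}(y-x,\pos\{e_1,e_2,e_3\})\le2\sqrt3\,\varepsilon$ for every $x$ and every $y\in J^+(x)$. By the uniqueness in Proposition \ref{P01a}, $\T=\pos\{e_1,e_2,e_3\}$.

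The main obstacle is precisely the non-accumulation argument of the third paragraph: a $C^1$ causal curve may cross $\partial B_\varepsilon(L)$ infinitely often, so the bound ``$-2\varepsilon$ per visited tube'' is useless until one knows the excursions cluster into finitely many runs, which rests on the $\ge\tfrac12$-separation of the lines of $L$ together with the strict growth of $x^1$ between tubes. The smallness of $\varepsilon$ enters decisively through $\tfrac14-\varepsilon>2\varepsilon$, so that the per-run loss is dominated by the between-run gain. The remaining ingredients — the three coordinate bounds for future $\overline g$-causal vectors and the elementary geometry of the tube family — are routine and are already implicit in Fact \ref{F30} and consequences $(1)$, $(2)$ of the construction.
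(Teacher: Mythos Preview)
Your proof is correct and follows essentially the same line as the paper: establish the coordinate bound $(y-x)^i\ge-2\varepsilon$ for every $y\in J^+(x)$, which forces $\T\subseteq\pos\{e_1,e_2,e_3\}$, and observe that the lines in $L_i$ give future pointing timelike loops of class $e_i$, yielding the reverse inclusion. Your ``runs'' argument for the coordinate bound is a more explicit version of what the paper sketches inside the proof of Fact~\ref{F30}; the paper then simply quotes that bound and passes to the limit along admissible sequences.

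One caveat on your logical framing: the one-sided condition $\dist_{\|.\|}(J^+(x)-x,C)\le\text{const}$ does \emph{not} pin down $C$ uniquely (any cone containing $\T$ satisfies it too), so Proposition~\ref{P01a} as you invoke it cannot by itself reduce the problem to verifying that condition for $\pos\{e_1,e_2,e_3\}$. What actually closes the argument is exactly your parenthetical ``check'': the coordinate bound gives $\T\subseteq\pos\{e_1,e_2,e_3\}$ directly from the definition of $\T$ via rotation vectors of admissible sequences, and $e_i\in\T$ plus convexity gives the opposite inclusion. That is precisely the paper's two-line proof, so you have all the pieces; just rewire the logic so that the inclusion $\pos\{e_i\}\subseteq\T$ is part of the main argument rather than an afterthought.
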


\begin{proof}
Use the fact from the previous proof that $p+h\in J^+(p)$ implies $h^i\ge -2\e$
for $i\in \{1,2,3\}$. Therefore we have $\T\subset \pos \{e_1,e_2,e_3\}$. The other inclusion follows
from the fact that the curves $p+te_i$ are future-pointing timelike if $p\in L_i$.
\end{proof}

The next step is the construction of the so-called {\it standard paths}. The standard-paths in the present work are almost identical 
to those in \cite{ba}. The main difference is that we have to take care to construct future-pointing curves.

Let $p,p+h\in L$ with $h^1,h^2,h^3\ge 1/2$, $p\in l_i\subset L_i$, $p+h\in l_j\subset L_j$ and $j\neq i$.
Then the standard path from $p$ to $p+h$ is defined as follows:

First assume that $h^k\ge 1$ for $k\neq i,j$. Define $l_k\subseteq L_k,\; k\neq i,j$, to be the unique line 
with $x^j(l_i)<x^j(l_k), x^i(l_k)<x^i(l_j)$ and $\dist (l_i,l_k)=\dist (l_k,l_j)=1/2$. These conditions imposed 
on $l_i,l_j$ and $l_k$ imply that the points $p_i\in l_i$ and $p_k\in l_k$ with $x^i(p_i)=x^i(l_k)-1/2$ and 
$x^j(p_k)=x^k(l_j)-1/2$ are uniquely determined. Indeed we have $p_i+\frac{2}{\sqrt{3}}v_1\in l_k$ and 
$p_k+\frac{2}{\sqrt{3}}v_1\in l_j$.

Now a standard path from $p$ to $p+h$ consists of following $l_i$ from $p$ until $p_i$, changing to $l_k$, by 
following the straight line segment with direction $v_1$ to $l_k$, then following $l_k$ until $p_k$, 
changing to $l_j$ via the line segment with direction $v_1$ and finally following $l_j$ until $p+h$.

For $h^k=1/2$ follow $l_i$ until $p_i$ with $x^i(p_i)=x^i(l_j)-1/2$, then change to $l_j$ and follow $l_j$ until 
$p+h$. This especially implies $q\in J^+(p)$ for all $p,q\in L$ with $(q-p)^i\ge 1/2$ for $i\in \{1,2,3\}$.

\begin{prop}\label{P30-}
We have $q\in J^+(p)$ for all $p,q\in \R^3$ with $(q-p)^i\ge \frac{1}{\e}+\frac{3}{2}$ for $i\in \{1,2,3\}$.
\end{prop}

\begin{proof}
By condition (i) for any pair of points $p,q\in\R^3$ there exist straight lines $l\subset L_j$ intersecting 
$B_{\frac{1}{2\e}}(p)\cap J^+(p)$ and $l'\subset L_k$ intersecting $B_{\frac{1}{2\e}}(q)\cap J^-(q)$ 
with $j\neq k$. Points $p'\in l$ and $q'\in l'$ are connectable via standard paths if $(q'-p')^i\ge 1/2$ for $i\in \{1,2,3\}$. 
\end{proof}

\begin{prop}\label{P30}
The stable time separation of $(T^3,g)$ is given by
$$\mathfrak{l}(h)=\sum \lambda_i h^i$$
for $h\in \pos \{e_1,e_2,e_3\}$.
\end{prop}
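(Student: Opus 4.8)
The plan is to establish the two inequalities $\mathfrak{l}(h)\le\sum\lambda_i h^i$ and $\mathfrak{l}(h)\ge\sum\lambda_i h^i$ separately, and by positive homogeneity and continuity (property~(4) of Theorem~\ref{T17}, together with Corollary~\ref{prop1}) it suffices to treat $h\in\pos\{e_1,e_2,e_3\}^\circ$, say with $h^i\ge 1/2$ for all $i$ after rescaling. Since $\mathfrak{l}$ is characterized as $\sup\{\LF(\mu)\}$ over invariant measures, and equivalently (Remark~\ref{R10a}, Theorem~\ref{T17}(1)) is read off from $\mathfrak{d}(h)=\sup_x d(x,x+h)$ via $|\mathfrak{l}(h)-\mathfrak{d}(h)|\le\overline C(\e)$ along integer rays, I would phrase everything in terms of estimating $d(p,p+nh)$ for $p\in\R^3$ and $n\to\infty$.

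\emph{Upper bound.} Given a future pointing causal curve $\gamma$ from $p$ to $q=p+nh$, I want $L^g(\gamma)\le\sum\lambda_i(q-p)^i+C$ for a constant $C$ independent of $n$. Using conditions (ii) and (iii) of the construction, on the complement of $B_\e(L)$ one has $\overline g\le g_{2\e}$, and on $B_\e(L_i)$ one has $\overline g\le g_i$; in either region the $\overline g$-length of a causal segment is controlled by the increment of the appropriate coordinate: on $B_\e(L_i)$ by $\lambda_i\,\Delta x^i$ (since $g_i=\lambda_i^2(-(dx^i)^2+\tfrac13\sum_{k\ne i}(dx^k)^2)$, and on a causal vector for $g_i$ the $x^i$-part dominates), and on the complement by something of order $\e\cdot(\text{Euclidean length})$ which, by Fact~\ref{F30}, is itself $O(\sum(q-p)^i)$ but with a small coefficient. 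The key point, exactly as in the proof of Fact~\ref{F30}, is that a causal curve spends ``most'' of its length near the lines $L$, and the time it spends off the lines contributes length with a coefficient that is $O(\e)$ relative to the coordinate increments, hence negligible against $\sum\lambda_i h^i$ up to an additive constant. Summing the contributions region by region and using $\sum\lambda_i=1$ gives $L^g(\gamma)\le\sum\lambda_i(q-p)^i+C\e\sum(q-p)^i+C'$. This is not yet sharp because of the $C\e\sum(q-p)^i$ term; to remove it one must be more careful — near $B_\e(L_i)$ the off-line excursions are forced to be short (the neighboring-lines estimate, observation (2): any causal curve between $\partial B_\e(l_i)$ and $\partial B_\e(l_j)$ has Riemannian length $\le\tfrac12-2\e$, and by (ii) its $\overline g$-length is even smaller), so the total off-line $\overline g$-length is bounded by (number of line-switches) $\times O(\e)$, and the number of switches is itself $O(\sum(q-p)^i)$ but each switch ``costs'' a full unit of coordinate progress, so the off-line length is genuinely a lower-order additive error, not multiplicative. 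I expect this bookkeeping — showing the off-line contribution is $O(1)$ per switch and switches are amortized against coordinate progress — to be the main obstacle.

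\emph{Lower bound.} For this I exhibit near-optimal curves: the standard paths. Given $p,q\in L$ with $(q-p)^i\ge 1/2$, the standard path from $p$ to $q$ follows segments of the lines $l_i\subset L_i$ (on which $\overline g=g_i$ and the $\overline g$-length of a segment is exactly $\lambda_i\Delta x^i$ by consequence~(1)) connected by short $v_1$-direction segments of bounded Euclidean length (at most $O(1)$ in total, since there are at most two switches). Hence $L^g(\text{standard path from }p\text{ to }q)\ge\sum_i\lambda_i(q-p)^i - C$, the $-C$ absorbing the lost progress during the two short switches. Applying this with $q=p+nh$ (first moving $p$ to a nearby lattice point of $L$ and snapping $nh$ to an integer combination, at bounded cost via Fact~\ref{F1}/$\mathrm{fill}$) yields $d(p,p+nh)\ge n\sum\lambda_i h^i - C$, so $\mathfrak{l}(h)=\lim_{n}\frac1n d(p,p+nh)\ge\sum\lambda_i h^i$. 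Combining the two bounds gives $\mathfrak{l}(h)=\sum\lambda_i h^i$ on the open cone, and then property~(4) of Theorem~\ref{T17} extends the formula to all of $\pos\{e_1,e_2,e_3\}$ by taking limsups (here the boundary values are $\lambda_i h^i$ with some $h^j=0$, which is consistent since $\sum\lambda_i h^i$ is continuous).
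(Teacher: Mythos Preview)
Your overall plan is the paper's: the lower bound via standard paths is exactly how the paper gets \eqref{e5}, and the extension to $\partial\T$ by continuity matches the paper's treatment as well.

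The gap is in the upper bound, and your heuristic for closing it misidentifies the mechanism. Decomposing $I$ into the tube pieces $A_i$ (the preimages of $\overline{B_\e(L_i)}$ together with the return-loops) and the off-tube set $A$, conditions (ii) and (iii) give
\[
L^g(\gamma)\le \sum_i\lambda_i\int_{A_i}\dot\gamma^i+\e\sum_i\lambda_i\int_A\dot\gamma^i.
\]
The difficulty is \emph{not} that the off-tube term is hard to make additive; it is that $\int_{A_i}\dot\gamma^i$ can \emph{exceed} $(q-p)^i$, because on $A_j\cup A_k$ the curve is only $g_j$- or $g_k$-causal, which allows $\dot\gamma^i<0$. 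Your sketch never addresses this backtracking, and your proposed fix (bounding the off-tube $\overline g$-length per switch) does not touch it. The paper's Lemma~\ref{L31} handles exactly this point: each connected component of $A_j$ has both endpoints in a single $\partial B_\e(l_j+k)$, so $x^i$ changes by at most $2\e$ on it, and (except at the ends of $I$) each such component is adjacent to a tube-change segment $C(A)$; comparing the two gives $\int_{A_j\cup A_k}\dot\gamma^i\ge -\frac{8\e}{1-4\e}\int_A\dot\gamma^i-4\e$. Substituting,
\[
L^g(\gamma)\le\sum_i\lambda_i(q-p)^i-\Bigl(1-\tfrac{8\e}{1-4\e}-\e\Bigr)\sum_i\lambda_i\int_A\dot\gamma^i+4\e,
\]
and since off tubes $\dot\gamma$ is $g_{2\e}$-causal (hence each $\dot\gamma^i>0$), the middle term is \emph{nonpositive} and one obtains the sharp additive bound $L^g(\gamma)\le\sum_i\lambda_i(q-p)^i+4\e$. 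So the off-tube coordinate progress does not have to be controlled as a small positive error at all --- it enters with a negative coefficient and only helps. Your phrase ``switches are amortized against coordinate progress'' gestures at this, but the amortization is between the \emph{backtracking of $x^i$ on the other tubes} and the \emph{off-tube $x^i$-progress}, not between off-tube length and total coordinate displacement.
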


First let us fix some notation. Following \cite{ba}, a future-pointing curve 
$\gamma\colon I\to \R^3$ {\it changes tubes $n$ times} if there exist parameter values $t_0<t_1<\ldots <t_n 
\in I$ such that $\gamma(t_{i-1})$ and $\gamma(t_i)$ lie in different components (i.e. tubes) of $B_\e(L)$.

Denote the endpoints of $\gamma$ with $p$ and $p+h$. For $i\in \{1,2,3\}$ consider the closed set 
$\gamma^{-1}(\overline{B_\e(L_i)})$. Denote by $B_{i,k}$ the connected component of the complement of 
$\gamma^{-1}(\overline{B_\e(L_i)})$ in $I$ whose boundary points belong to the same $\gamma^{-1}(B_\e(l_i+k))$ 
for some $k\in \Z^3$. Define 
$$A_i:=\gamma^{-1}(\overline{B_\e(L_i)})\cup (\cup_{k\in \Z^3}B_{i,k}).$$
Now the connected components of the set $A:=I\setminus (A_1\cup A_2\cup A_3)$ correspond exactly to those 
arcs of $\gamma$ on which $\gamma$ either changes tubes or the initial and final arcs of $\gamma|_A$ outside the
tubes.

\begin{lemma}\label{L31}
Let $p,q\in\R^3$ and $\gamma \colon I\to \mathbb R^3$ be a future-pointing curve connecting $p$ with $q$. 
Set $A$ as before. Then we have
$$\sum\lambda_i\int_{A}\dot{\gamma}^i\le (1-8\e)\left(\sum\lambda_i (q-p)^i-L^g(\gamma)+4\e\right).$$
\end{lemma}

\begin{proof}
First observe $\sqrt{|\overline{g}_p(v,v)|}\le \lambda_i v^i$ for $i\in\{1,2,3\}$, $p\in B_\e(L_i)$ and $v\in T\R^3_p$ a future-pointing vector.
Next, if $v\in T\R^3$ is future-pointing for $g_\e$ we have $\sqrt{|g_\e(v,v)|}\le \e\sum\lambda_i v^i$. 
This follows from $\sqrt{|g_\e(v,v)|}\le \frac{\e}{2}|v|$, $|\frac{v^i}{|v|}-\frac{1}{\sqrt{3}}|\le \frac{\e}{2}$ and 
$\sum \lambda_i=1$.

Let $j\in \{1,2,3\}$. For each connected component $C(A_j)$ of $A_j$ both endpoints are endpoints of connected 
components of $A$ or contain at least one endpoint of $I$. For $i\neq j$ and an adjacent component $C(A)$ of $A$ 
we have
$$\int_{C(A_j)}\dot{\gamma}^i\ge \frac{-2\e}{\frac{1}{2}-2\e}\int_{C(A)}\dot{\gamma}^i\text{ or }
\int_{C(A_j)}\dot{\gamma}^i\ge -2\e.$$
Since $C(A)$ can be adjacent to two different components of $A_1\cup A_2\cup A_3$ we conclude 
$$\int_{A_j\cup A_k}\dot{\gamma}^i\ge -\frac{8\e}{1-4\e}\int_A \dot{\gamma}^i-4\e$$
for $\{i,j,k\}=\{1,2,3\}$. Now we estimate
\begin{align*}
L^g(\gamma)&=L^g)(\gamma|_A)+\sum L^g(\gamma|_{A_i})\le \e\sum\lambda_i\int_A\dot{\gamma}^i
+\sum\lambda_i\int_{A_i}\dot{\gamma}^i \\
&\le\sum\lambda_i (q-p)^i-\left(1-\frac{8\e}{1-4\e}-\e\right)\sum\lambda_i\int_A\dot{\gamma}^i+4\e\\
&\le \sum\lambda_i (q-p)^i-\frac{1}{1-8\e}\sum\lambda_i\int_A\dot{\gamma}^i+4\e.
\end{align*}
\end{proof}

To complete the proof of Proposition \ref{P30}, we use the standard paths and Proposition \ref{P30-} to estimate the 
time separation $d(p,q)$ between $p,q\in \R^3$ with $(q-p)^i\ge \frac{1}{\e}+\frac{3}{2}$. We have
\begin{equation}\label{e5}
d(p,q)\geq \sum\lambda_i (q-p)^i-\left(\frac{1}{\e}+\frac{3}{2}\right)\sum\lambda_i.
\end{equation}
If $q-p\in\partial\T$, say $(q-p)^k=0$, choose points $p',q'\in L$ with $(q')^k=(p')^k+\frac{1}{2}$ and $\dist(q-p,q'-p')
\le \sqrt{2}$. A standard-path connecting $p'$ and $q'$ yields the result. Notice the trivial estimate 
$d(p,q)\le \sum \lambda_i (q-p)^i$ for $q-p\in \pos\{e_1,e_2,e_3\}$. This completes the proof of Proposition \ref{P30}.

\subsection{Timelike Maximizers}

\begin{prop}\label{P31}
A maximal future-pointing geodesic segment $\gamma \colon [a,b]\to \R^3$ with endpoints $p\in L_i$ and 
$q\in L_j$ ($i\neq j$) lies at a Riemannian distance of at most $4\e$ from the standard path connecting $p$ and $q$.
\end{prop}

\begin{proof} 
We have
$$d(p,q)\geq \sum\lambda_i (q-p)^i-\sum\lambda_i,$$
if $x^k(l_i)>x^k(l_j)+1$ and $\{i,j,k\}=\{1,2,3\}$. Analogously we obtain
$$d(p,q)\geq \sum\lambda_i (q-p)^i-\frac{1}{2}\sum\lambda_i,$$
if $x^k(l_i)=x^k(l_j)+1/2$ $(\{i,j,k\}=\{1,2,3\})$.

Recall the definition of $A$ from above.
Let $\sharp A$ be the number of connected components of $A$. Then $\int_A \dot{\gamma}^i\ge \sharp A(\frac{1}{2}-2\e)$ for all $i\in \{1,2,3\}$. Consequently 
maximizers can change tubes only twice in the first case and once in the second case. The proposition follows from the observation that for $l\subset L_i$, 
$l'\subset L_j$ with $x^k(l)\le x^k(l')$ $(\{i,j,k\}=\{1,2,3\})$ and $\dist(l,l')=1/2$, the intersection $J^+(l)\cap J^-(l')$ is contained in $B_{4\e}(x+\text{span}\{v_1\})$, 
where $x\in \R^3$ is the unique point in $l$ with $x^i(x)=x^i(l')-1/2$.
\end{proof}

\begin{prop}
A maximal geodesic segment $\gamma\colon [a,b]\to \R^3$ can change tubes at most six times.
\end{prop}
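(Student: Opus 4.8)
The plan is to restrict $\gamma$ to the part that actually runs through the tubes, apply the coarse estimate of Lemma~\ref{L31} to that sub-arc, and play it off against a long comparison curve built from standard paths, in the spirit of the proof of Proposition~\ref{P31}. \emph{Reduction.} If $\gamma$ meets at most one connected component of $B_\e(L)$ it changes tubes $0$ times, so assume it meets at least two. Let $[a',b']\subseteq[a,b]$ be the smallest closed interval containing every $t$ with $\gamma(t)\in\overline{B_\e(L)}$; off $[a',b']$ the curve avoids the tubes, so $\gamma':=\gamma|_{[a',b']}$ is again a future pointing maximizer and runs through exactly the same tubes, in the same order, as $\gamma$. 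Put $p':=\gamma(a')$, $q':=\gamma(b')$, and let $A'$ be the set $I\setminus(A_1\cup A_2\cup A_3)$ attached to $\gamma'$ as before Lemma~\ref{L31}. Since $p'$ and $q'$ lie in closed tubes, $A'$ has neither an initial nor a final arc; its components are precisely the tube-change arcs of $\gamma'$, so the number $\sharp A'$ of components equals the number $n$ of tube changes of $\gamma$. It therefore suffices to show $\sharp A'\le 6$.

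Because $\gamma'$ is maximal, $L^g(\gamma')=d(p',q')$, so Lemma~\ref{L31} gives
$$\sum_i\lambda_i\int_{A'}\dot\gamma^i\ \le\ (1-8\e)\Bigl(\sum_i\lambda_i(q'-p')^i-d(p',q')+4\e\Bigr).$$
On the other hand, exactly as in the proof of Proposition~\ref{P31}, each component of $A'$ is an arc bridging a gap of Euclidean width $\ge\tfrac12-2\e$ between two tubes, while off $B_\e(L)$ every future causal direction $w$ satisfies $w^i\ge(\tfrac1{\sqrt3}-\e)|w|$; hence $\int_{A'}\dot\gamma^i\ge\sharp A'(\tfrac12-2\e)$ for each $i$, so $\sum_i\lambda_i\int_{A'}\dot\gamma^i\ge\sharp A'(\tfrac12-2\e)$. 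Combining this with the bound on $\int_{A_j\cup A_k}\dot\gamma^i$ from the proof of Lemma~\ref{L31} and with the fact that $\dot\gamma^i>0$ on the tubes of $L_i$ (there $\overline g\le g_i$, so the $\overline g$-future cone lies inside the $g_i$-future cone) one also gets $(q'-p')^i\ge(1-O(\e))\,\sharp A'(\tfrac12-2\e)-O(\e)$ for each $i$.

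\emph{Lower bound for $d(p',q')$ and conclusion.} If some $(q'-p')^i\le 1+4\e$, the last inequality already forces $\sharp A'\le 2$; so assume $(q'-p')^i>1+4\e$ for all $i$. Write $p'\in\overline{B_\e(l)}$ with $l\subset L_i$ and $q'\in\overline{B_\e(l')}$ with $l'\subset L_j$, and let $\check p\in l$ be the foot of the perpendicular from $p'$ to $l$ translated by $+\e e_i$, and $\check q\in l'$ the foot of the perpendicular from $q'$ to $l'$ translated by $-\e e_j$. Since $\overline g\le g_i$ on $\overline{B_\e(L_i)}$ and $g_i(\check p-p',\check p-p')<0$, the straight segment $p'\to\check p$ is future pointing timelike and stays inside the tube, and symmetrically the segment $\check q\to q'$; each coordinate of $\check p-p'$ and of $q'-\check q$ has modulus $\le2\e$, so all coordinates of $\check q-\check p$ exceed $1$. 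Thus $\check p,\check q\in L$ with all coordinate differences $>1$, so they can be joined by a standard path (chained through a third family when $i=j$); concatenating $p'\to\check p$, this standard path, and $\check q\to q'$ produces a future pointing curve from $p'$ to $q'$ whose $g$-length, by the standard-path estimates of the proof of Proposition~\ref{P31} (loss $\le\sum\lambda_i=1$ per standard path, at most one more unit for chaining, $4\e$ for the two short segments), is at least $\sum_i\lambda_i(q'-p')^i-3$. Hence $d(p',q')\ge\sum_i\lambda_i(q'-p')^i-3$, and substituting into the displayed inequality gives $\sharp A'(\tfrac12-2\e)\le(1-8\e)(3+4\e)<7(\tfrac12-2\e)$, so $\sharp A'<7$. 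In either case $\sharp A'\le 6$, i.e. $\gamma$ changes tubes at most six times.

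\emph{Main obstacle.} The one step with genuine content is the lower bound for $d(p',q')$: from a maximizer whose endpoints merely lie in tubes one has to manufacture a comparison curve whose $g$-length is within a fixed constant of $\sum_i\lambda_i(q'-p')^i$. The delicate points are verifying that the tube-entry and -exit segments are causal and stay inside the tubes, checking that $\check q-\check p$ is large enough and in an admissible configuration for a standard path to exist — which is exactly where the largeness of the coordinates $(q'-p')^i$ is used — and keeping the accumulated $O(\e)$ and $O(1)$ losses small enough that the final constant stays at $3$, which is what yields the bound six.
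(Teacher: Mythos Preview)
Your overall plan is the paper's plan: restrict to the sub-arc $\gamma|_{[a',b']}$ between the first and last tube contacts, bound $\sum_i\lambda_i\int_{A'}\dot\gamma^i$ below by $\sharp A'(\tfrac12-2\e)$ and above via Lemma~\ref{L31}, and close the loop with a lower bound $d(p',q')\ge\sum_i\lambda_i(q'-p')^i-3$ obtained from an explicit comparison curve. The case distinction and the final arithmetic are fine.

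The genuine gap is in the comparison curve. You argue that the straight segment $p'\to\check p$ is $\overline g$-future timelike because ``$\overline g\le g_i$ on $\overline{B_\e(L_i)}$ and $g_i(\check p-p',\check p-p')<0$''. But in this paper the ordering $g'\ge g$ means the $g'$-causal cone \emph{contains} the $g$-causal cone (this is how condition~(i) yields ``$v_1$ is everywhere timelike'', and how condition~(iii) is used in the proof of Fact~\ref{F30} to conclude that $\overline g$-causal vectors in $B_\e(L_i)$ satisfy $|w|^2\le 4(w^i)^2$). Hence condition~(iii), $g_i\ge\overline g$, says only that $\overline g$-causal $\Rightarrow$ $g_i$-causal, not the converse. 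In fact near $\partial B_\e(l)$ the $\overline g$-cones are, by~(ii) and continuity, essentially the narrow $g_{2\e}$-cones around $v_1$, while your vector $\check p-p'=\e e_i-(p'-\pi(p'))$ makes a large angle with $v_1$; so this segment need not be $\overline g$-causal at all, and your comparison curve is not admissible. The same applies to $\check q\to q'$.

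The paper sidesteps this by translating by $(1,1,1)=\sqrt3\,v_1$: it picks $p\in J^+(\gamma(a'))\cap(l+(1,1,1))$ and $q\in J^-(\gamma(b'))\cap(l'-(1,1,1))$, each within distance $\sqrt3+2\e$ of the respective endpoint. Since $(1,1,1)$ is $g_\e$-timelike, condition~(i) makes the move $\overline g$-causal regardless of where on the tube boundary $\gamma(a')$ sits. The translated lines $l\pm(1,1,1)$ are again lines of $L$, so a standard path joins $p$ to $q$ directly; this also removes the need for your separate ``$i=j$, one more unit for chaining'' discussion. The total loss is $3\sum_i\lambda_i=3$, giving $(\tfrac12-2\e)\,\sharp A\le(1-8\e)(3+4\e)$ and hence $\sharp A\le 6$.
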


\begin{proof}
It suffices to consider future-pointing geodesics.
Set $a':=\inf \gamma^{-1}(B_\e(L))$  and $b':=\sup \gamma^{-1}(B_\e(L))$. Choose $l,l'\subset L$ with $\gamma(a')
\in \ol{B_\e(l)}$ and $\gamma(b')\in \ol{B_\e(l')}$. Then the intersections $J^+(\gamma(a'))\cap (l+(1,1,1))$ and 
$J^-(\gamma(b'))\cap (l-(1,1,1))$ are nonempty. Note that we can choose points in 
$p\in J^+(\gamma(a'))\cap (l+(1,1,1))$ and $q\in J^-(\gamma(b'))\cap (l-(1,1,1))$ with 
$\dist(\gamma(a'),p)$ resp. $\dist(\gamma(b'),q)\le\sqrt{3}+2\e$. We obtain
\begin{align*}
d(\gamma(a'),\gamma(b'))&\ge d(p,q)\ge \sum\lambda_i(q-p)^i-\sum \lambda_i\\
&\ge \sum\lambda_i(\gamma(b')-\gamma(a'))^i-3\sum\lambda_i.
\end{align*}
With Lemma \ref{L31} we conclude 
$$(\frac{1}{2}-2\e)\sharp A\le (1-8\e)(3\sum \lambda_i+4\e).$$
\end{proof}

\begin{cor}
Every maximal geodesic has asymptotic distance in each of its senses to one of the lines in $L$ of at most 
$\varepsilon$.
\end{cor}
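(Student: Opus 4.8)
The plan is to combine the preceding proposition — a maximal geodesic segment changes tubes at most six times — with the near–null degeneracy of $\overline g$ outside the tubes $B_\e(L)$. First I would pass to a lift $\overline\gamma\colon\R\to\R^3$ of the given maximal geodesic, parametrised by $g_R$-arclength, and treat the forward sense $t\to+\infty$; the backward sense is identical after reversing the time orientation. Applying the preceding proposition to every segment $\overline\gamma|_{[-T,T]}$ shows that $\overline\gamma$ changes tubes only finitely often over all of $\R$, so there is $t_+\in\R$ beyond which $\overline\gamma$ makes no further tube change: every point of $\overline\gamma((t_+,\infty))$ lying in $B_\e(L)$ lies in one fixed component $B_\e(l)$, $l=l_i+k\in L$. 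Thus, past $t_+$, the set where $\overline\gamma$ is outside $\overline{B_\e(L)}$ is a (possibly infinite) union of intervals, each of which either is a bounded excursion re-entering $B_\e(l)$ or is a single terminal ray $(t_1,\infty)$.

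Second, I would rule out a terminal ray. If $\overline\gamma|_{(t_1,\infty)}\cap\overline{B_\e(L)}=\emptyset$, then for $t>t_1$ the interval $(t_1,t)$ is contained in the set $A$ associated with $\overline\gamma|_{[0,t]}$ in Lemma \ref{L31}, and since every future vector outside $B_\e(L)$ satisfies $w^i\ge(\tfrac1{\sqrt3}-\e)|w|$ and $\sum\lambda_i=1$, Lemma \ref{L31} yields $(\tfrac1{\sqrt3}-\e)(t-t_1)\le\sum\lambda_i\int_A\dot{\overline\gamma}^i\le(1-8\e)\big(\sum\lambda_i(\overline\gamma(t)-\overline\gamma(0))^i-L^g(\overline\gamma|_{[0,t]})+4\e\big)$. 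By maximality $L^g(\overline\gamma|_{[0,t]})=d(\overline\gamma(0),\overline\gamma(t))$, and for $t$ large each coordinate difference $(\overline\gamma(t)-\overline\gamma(0))^i$ exceeds $\tfrac1\e+\tfrac32$, so (\ref{e5}) bounds $\sum\lambda_i(\overline\gamma(t)-\overline\gamma(0))^i-d(\overline\gamma(0),\overline\gamma(t))$ by the constant $\tfrac1\e+\tfrac32$. Hence the right-hand side stays bounded while the left-hand side diverges — a contradiction.

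Third, I would eliminate the bounded excursions. On such an excursion $\overline\gamma|_{[s_1,s_2]}$ both endpoints lie on $\partial B_\e(l)$ and $\overline\gamma$ runs in $\R^3\setminus\overline{B_\e(L)}$, where $\overline g\le g_{2\e}$ and the two coordinates transverse to $l$ are strictly increasing; this forces $s_2-s_1=O(\e)$, so $L^g(\overline\gamma|_{[s_1,s_2]})\le L^{g_{2\e}}(\overline\gamma|_{[s_1,s_2]})=O(\e^2)$. On the other hand $\overline g\ge g_\e$ and $\overline g$ agrees with $g_i$ to first order on $B_\e(l)$, so $\overline\gamma(s_2)\in J^+(\overline\gamma(s_1))$ may be joined to $\overline\gamma(s_1)$ inside $B_\e(l)$ by a future causal curve essentially along the $e_i$-direction of $\overline g$-length $\gtrsim\lambda_i\big(x^i(\overline\gamma(s_2))-x^i(\overline\gamma(s_1))\big)$, a quantity comparable to $\lambda_i(s_2-s_1)$; since $\lambda_i\gg\e$ this exceeds $L^g(\overline\gamma|_{[s_1,s_2]})$, contradicting maximality. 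Therefore $\overline\gamma(t)\in B_\e(l)$, i.e. $\dist(\overline\gamma(t),l)<\e$, for all $t>t_+$, which is the asserted asymptotic bound; the backward sense is obtained in the same way.

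The main obstacle is the third step. One must pin down how far an excursion of a maximizer can leave the $\e$-tube and then show that the length deficit — caused by $\overline g$ being almost null off the tubes — contradicts maximality \emph{uniformly}, including for short excursions whose endpoints are barely separated. This rests on the quantitative sandwich $g_\e\le\overline g\le g_{2\e}$ outside $B_\e(L)$ together with the wide $e_i$-cone of $\overline g\approx g_i$ inside $B_\e(l)$, and the one genuinely delicate point is to exhibit the comparison curve inside $B_\e(l)$ explicitly, so that it is certifiably future pointing and stays in the tube.
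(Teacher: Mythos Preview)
Your steps 1 and 2 are correct and are precisely the paper's argument, just unpacked: the paper shows directly (via $L^g(\gamma)\le\e\sum\lambda_i(\gamma^i(b)-\gamma^i(a))$ outside the tubes together with the lower bound~\eqref{e5}) that a maximal segment disjoint from $B_\e(L)$ has bounded $g_R$-length, which is the same content you extract from Lemma~\ref{L31} and~\eqref{e5}. Combined with the preceding proposition on finitely many tube changes, this already yields the corollary: past the last tube change the geodesic can only meet one component $B_\e(l)$, and it cannot stay out of $B_\e(L)$ on an unbounded terminal ray, so it keeps returning to $B_\e(l)$. The paper's proof stops here.

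Your step 3 therefore proves more than the paper asks, and the length-comparison route you sketch is both unnecessary and shaky. Two concrete problems: the inequality $\lambda_i\gg\e$ is not among the hypotheses (the $\lambda_i$ are arbitrary positive numbers summing to $1$), so your comparison $L^g(\text{competitor})\gtrsim\lambda_i(s_2-s_1)$ need not beat $O(\e^2)$; and constructing a future-pointing competitor that stays inside $\overline{B_\e(l)}$ with both endpoints on $\partial B_\e(l)$ is exactly the delicate point you flag, and you do not actually carry it out. If you do want the stronger conclusion that the geodesic eventually \emph{stays} in $B_\e(l)$, there is a much cheaper argument hiding in your own observation: on $\partial B_\e(l)$ condition~(ii) already forces $\dot\gamma$ within angle $\approx\e$ of $v_1$, so at an exit point the outward condition $x^j\dot\gamma^j+x^k\dot\gamma^k\ge 0$ gives $x^j+x^k\gtrsim 0$, while at a re-entry point it would give $x^j+x^k\lesssim 0$; but $x^j+x^k$ is strictly increasing along the excursion. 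This rules out bounded excursions directly, with no length comparison and no appeal to $\lambda_i$.
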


\begin{proof}
Like before we can assume all curves to be future-pointing. Let $\gamma\colon [a,b]\to (\R^3,\overline{g})$ be maximal. If $\gamma$ never intersects $B_\e(L)$, 
we have $L^g(\gamma)\le \e\sum\lambda_i(\gamma^i(b)-\gamma^i(a))$. From (\ref{e5}) follows $L^g(\gamma)\ge \sum\lambda_i((\gamma^i(b)-\gamma^i(a))-
(\frac{1}{\e}+\frac{3}{2}))$. Consequently
$$0\le \sum\lambda_i\left((\e-1)(\gamma(b)-\gamma(a))^i+\left(\frac{1}{\e}+\frac{3}{2}\right)\right).$$
If $\dist(\gamma(a),\gamma(b))\ge \frac{1}{\e}\ge \sqrt{3}\frac{1+3\e}{2\e(1-\e)(1-\sqrt{3}\e)}$ and $\gamma$ does not intersect $B_\e(L)$, then $\gamma$ cannot 
be maximal.
\end{proof}

\begin{prop}\label{P33}
For each pair of future-pointing lines $l\subset L_i$, $l'\subset L_j$ $(i\neq j)$ with $x^k(l')\ge x^k(l)$, 
$\{i,j,k\}=\{1,2,3\}$, there exists a maximal geodesic $\gamma$ which is asymptotic to $l'$ for $t\to \infty$ and 
asymptotic to $l$ for $t\to -\infty$.
\end{prop}

\begin{remark}
For $i\neq j$ and $l\subset L_i$, $l'\subset L_j$ either $l'\cap J^+(l)\neq \emptyset$ or $l\cap J^+(l')\neq 
\emptyset$, depending on whether $x^k(l')\ge x^k(l)$ or $x^k(l)\ge x^k(l')$ for $\{i,j,k\}=\{1,2,3\}$.
For $l,l'\in L_i$ we have $l'\in J^+(l)$ iff $x^j(l')>x^j(l)$ and $x^k(l')>x^k(l)$ for $\{i,j,k\}=\{1,2,3\}$.
\end{remark}

\begin{lemma}\label{L32}
There exists $\e'\in (0,\e]$ such that for all $\delta\in (0,\e')$ there exists $r(\delta)<\infty$ such that every 
future-pointing maximal pregeodesic $\gamma\colon [a,b]\to \R^3$ with $|\dot{\gamma}|\equiv 1$ and endpoints in a 
tube $B_{\e'}(l)$ for some $l\subset L$ satisfies $\gamma(s)\in B_\delta (l)$ for $s\in[a+r(\delta),b-r(\delta)]$.
\end{lemma}

\begin{proof} 
Let $l\subset L_j$. Choose $\e'\in (0,\e]$ such that 
 $$B_{\e'}(l)\subset \left\{p\in B_\e(l)\left|\; g_p\ge \frac{\lambda_j^2}{3}(-(dx^j)^2+(dx^i)^2+(dx^k)^2)\right.\right\}.$$
Denote for $p\in B_{\e'}(l)$ by $p'\in l$ the Euclidian orthogonal projection of $p$ onto $l$. Then the 
curve $t\in[0,1]\mapsto p+t(|p-p'| e_j+(p'-p))$ is future-pointing. Consequently for all $\delta\in (0,\e']$ and all 
$p,q\in B_\delta(l)$ we have
\begin{align}\label{e6}
d(p,q)\ge \lambda_j(q-p)^j-2\lambda_j\delta.
\end{align}
Set $A_\d:=\gamma^{-1}(B_\d(l))$ and choose $\eta(\d)\in (0,\lambda_j/2)$ such that
$$\sqrt{|\overline{g}_p(v,v)|}\le (\lambda_j-\eta(\d)) v^j$$
for any $p\in B_\e(l)\setminus B_\d(l)$ and any future-pointing vector $v\in T\R^3_p$ (recall Condition (iii)).
Note that a future-pointing curve with endpoints in $B_\e(l)$ cannot intersect a different $B_\e(l')$. 
We have
$$L^g(\gamma)\le (\lambda_j-\eta(\d))\int_{A_\d^c}\dot{\gamma}^j+\lambda_j\int_{A_\d}\dot{\gamma}^j=
  \lambda_j(\gamma(b)-\gamma(a))^j-\eta(\d)\int_{A_\d}\dot{\gamma}^j.$$
On the other hand the maximality of $\gamma$ implies $L^g(\gamma)\ge \lambda_j(\gamma(b)-\gamma(a))^j
-2\lambda_j\e'$ and thus we obtain
$$\int_{A_\d^c}\dot{\gamma}^j \le \frac{2\lambda_j\e'}{\eta(\d)}.$$
Set $\d':=\eta\left(\frac{\d}{2}\right)\frac{\d}{2}$ and $r(\d):=\frac{4\e'\lambda_j}{\eta(\d')}$. Note that $|v|^2\le \frac{4}{3}(v^j)^2$ for all $p\in \R^3\setminus 
(B_\e(L_i)\cup B_\e(L_k))$ ($\{i,j,k\}=\{1,2,3\}$) and all future-pointing vectors $v\in T\R^3_p$. Then by the previous argument there exist $s^-\in [a,a+r(\d)]$ and 
$s^+\in [b-r(\d),b]$ with $\gamma(s^\pm)\in B_{\d'}(l)$. To complete the proof assume there exists $s\in [a+r(\d),b-r(\d)]$ with 
$\gamma(s)\in \R^3\setminus B_\d(l)$. With (\ref{e6}) we know that $d(\gamma(s^-),\gamma(s^+))\ge 
\lambda_j(\gamma^j(s^+)-\gamma^j(s^-))-2\lambda_j \d'$. Consequently
\begin{align*}
L^g(\gamma|_{[s^-,s^+]})&\le \left(\lambda_j -\eta\left(\frac{\d}{2}\right)\right)\int_{A_{\d/2}^c\cap [s^-,s^+]}\dot{\gamma}^j 
+\lambda_j\int_{A_{\d/2}\cap [s^-,s^+]}\dot{\gamma}^j\\
&=\lambda_j (\gamma^j(s^+)-\gamma^j(s^-))-\eta\left(\frac{\d}{2}\right)\int_{[s^-,s^+]\setminus A_{\d/2}}\dot{\gamma}^j.
\end{align*}
Since $\int_{[s^-,s^+]\setminus A_{\d/2}}\dot{\gamma}^j\ge\d$ we have
$\d'> \eta\left(\frac{\d}{2}\right)\frac{\d}{2}$. This contradicts the choice of $\d'$.
\end{proof}

\begin{proof}[Proof of Proposition \ref{P33}]
Let $x\in l$ with $x^i(x)=x^i(l')$ and $x'\in l'$ with $x^j(x')=x^j(l)$. The assumption $x^k(l')\ge x^k(l)$ implies that 
the standard path from $x-n e_i$ to $x'+n e_j$ is defined for all $n\in \N$ (compare previous Remark).

With Proposition \ref{P31} we know that a maximal geodesic $\gamma_n$ from $x-n e_i$ to $x'+n e_j$ stays within a
distance of $4\e$ from the standard path between $x-n e_i$ and $x'+n e_j$. Recall that we can estimate the length of 
the standard path, and therefore the time separation of $x-n e_i$ and $x'+n e_j$, by
$$L^g(\gamma_n)\ge \sum_{\tau=1}^3 \lambda_\tau ((x+n e_j- (x'-n e_i))^\tau-1).$$
Recall the definition of the sets $A,A_1,A_2$ and $A_3$.
For $k$ with $\{i,j,k\}= \{1,2,3\}$ we obtain $L^g(\gamma_n|_{A_k})\ge \lambda_k (x+n e_j- (x'-n e_i))^k -1-3\e$. 
If this was not true, we would obtain, using the bounds $L^g(\gamma_n|_A)\le 2\e$, $L^g(\gamma_n|_{A_i})\le \lambda_i 
(x+n e_j- (x'-n e_i))^i$ and $L^g(\gamma_n|_{A_j})\le \lambda_j (x+n e_j- (x'-n e_i))^j$, that
\begin{align*}
\sum_\tau \lambda_\tau((x+n e_j&- (x'-n e_i))^\tau-1)\\
&\le L^g(\gamma_n)\le \sum_\tau \lambda_\tau(x+n e_j- (x'-n e_i))^\tau+2\e -1-3\e.
\end{align*}
This is obviously a contradiction.

For $\d\in (0,\e]$ set $A_{j,\d}:=\gamma_n^{-1}(B_\d(l'))$. Recall the definition of $\eta(\d)$ from the proof 
of Lemma \ref{L32}. We have
$$L^g(\gamma_n|_{A_j})\le (\lambda_j-\eta(\d))\int_{A_j\setminus A_{j,\d}}\dot{\gamma}^j_n 
  +\lambda_j\int_{A_{j,\d}}\dot{\gamma}_n^j.$$
From Proposition \ref{P31} we have $L^g(\gamma_n|_{A_j})\ge \lambda_j \int_{A_j}\dot{\gamma}_n^j -1-8\e$ and
consequently
\begin{equation}\label{e7}
\int_{A_j\setminus A_{j,\d}}\dot{\gamma}_n^j\le \frac{1+8\e}{\eta(\d)}.
\end{equation}
With Lemma \ref{L32} we see that any limit curve $\gamma$ of $\{\gamma_n\}_{n\in\N}$ is asymptotic to $l'$ for $t\to \infty$. 
The same argument applies to $l$ for $t\to -\infty$. Note that the $\overline{g}$-length of $\gamma$ is not bounded. 
This proves the proposition.
\end{proof}

\begin{prop}\label{P34}
For each pair of lines $l,l'\subset L_i$ $(i\in \{1,2,3\})$ with $x^j(l')>x^j(l)$ and $x^k(l')>x^k(l)$ 
$(\{i,j,k\}=\{1,2,3\})$ there exists a maximal future-pointing geodesic $\gamma\colon \R\to \R^3$
asymptotic to $l$ for $t\to -\infty$ and asymptotic to $l'$ for $t\to \infty$.
\end{prop}

\begin{prop}\label{P35}
Let $\zeta$ be a future-pointing maximizer asymptotic to a periodic maximizer $\xi$. Then $\zeta$ cannot 
cross any other periodic maximizer $\chi$ of the same fundamental class as $\xi$.
\end{prop}

\begin{proof}
The original proof for Riemannian manifolds of dimension two is due to \cite{mo1}. The arguments therein 
work literally in the same way for this case, taking into account that the lines in $L$ are the traces of lifted periodic 
timelike maximizers. 
\end{proof}

\begin{proof}[Proof of Proposition \ref{P34}]
Obviously we have $l'\in J^+(l)$.
Choose a $k\in \Z^3$ such that $l+k=l'$ and a point $p\in l$. Further choose maximal future-pointing pregeodesics 
$\gamma_n\colon [0,T_n]\to \R^3$ with $|\dot{\gamma}_n|\equiv 1$ connecting $p-ne_i$ to $p+k+ne_i$. Let $[0,a_n)$ and 
$(b_n,T_n]$ be maximal intervals with ($\e'\in (0,\e]$ as in Lemma \ref{L32})
$$\gamma_n([0,a_n))\subset B_{\e'}(l)\text{ and }\gamma_n((b_n,T_n])\subset B_{\e'}(l').$$
We know with Lemma \ref{L32} that $\gamma_n$ does not intersect $B_{\e'}(l\cup l')$ on $[a_n+r(\e'),b_n-r(\e')]$. 
$\gamma_n$ cannot intersect the $\e$-tube of any other line $l''\in L_i$ besides $l$ and $l'$ by Proposition \ref{P31}. 
The Lebesgue measure of $\gamma^{-1}(B_{\e}(l\cup l')\setminus B_{\e'}(l\cup l'))$ is bounded with (\ref{e7}).
Therefore $b_n-a_n$ will be bounded, say by $A>0$ for 
all $n\in\N$. Next choose integers $k_n \in \Z$ such that $\gamma_n(a_n)+k_n e_i$ is bounded in $\R^3$. Then we can 
choose, up to a subsequence, a pregeodesic $\gamma$ with $\lim \dot{\gamma}_n(a_n)=\dot{\gamma}(0)$.
If the sequences $\{a_n\}$ and $\{T_n-b_n\}$ diverge to infinity, the proof is complete. In more detail: In this case 
$\gamma$ will be maximal and $\gamma(t)$ will be contained in $\overline{B_{\e'}(l)}$ for $t\le 0$ and in 
$\overline{B_{\e'}(l')}$ for $t\ge A$. Lemma \ref{L32} then shows that $\gamma$ is asymptotic to $l$ for $t\to-\infty$ 
and to $l'$ for $t\to\infty$. 

To prove the proposition we have to exclude the other cases (a) $\{a_n\}$ is bounded and (b) $\{T_n-b_n\}$ is bounded.
This works completely analogously to the proof of \cite[Proposition 5.7]{ba}, using Proposition \ref{P35}.
Again the unboundedness of the $g$-length of $\gamma$ implies the proposition.
\end{proof}

\section{Proofs}\label{proofs}

\subsection{Notation and Background Structures}\label{rieba}
With $M$ we will always denote a connected, smooth $(C^\infty)$ and compact manifold of dimension $m<\infty$ without boundary. 
By the {\it Abelian cover} 
$$\overline{\pi}\colon\overline{M}\to M$$ 
of $M$ we will denote the manifold $\overline{M}:=\widetilde{M}/[\pi_1(M),\pi_1(M)]$ 
where $\widetilde{M}$ is the universal cover of $M$ and $[\pi_1(M),\pi_1(M)]$ is the commutator subgroup of the fundamental 
group $\pi_1(M)$ of $M$. Recall that the Abelian cover is determined by the property that it covers every covering manifold of $M$ with Abelian deck transformation group. The deck transformation group 
$$\mathcal{D}(\overline{M},M)$$ 
of $\overline{M}\to M$ is naturally isomorphic to $H_1(M,\Z)$. 
We will identify $\mathcal{D}(\ol M,M)$ and $H_1(M,\Z)$ throughout the text. In this spirit the action of $\mathcal{D}(\overline{M},M)$ on $\overline{M}$ will be 
abbreviated by ``$+$'', i.e. 
$$(k,\overline{x})\in \mathcal{D}(\overline{M},M)\times \overline{M}\mapsto \overline{x}+k\in \overline{M}.$$
Further we denote with $H_1(M,\mathbb{Z})_\mathbb{R}$ the image of the natural map $H_1(M,\mathbb{Z})\to H_1(M,\mathbb{R})$. In our notation 
we will not distinguish between ``$k\in \mathcal{D}(\ol M,M)$'' and ``$k\in H_1(M,\Z)_\R$'', e.g. we define the action of a cohomology class $\alpha\in H^1(M,\R)$ 
on $H_1(M,\Z)\cong \mathcal{D}(\ol M,M)$ via the homomorphism $H_1(M,\mathbb{Z})\to H_1(M,\mathbb{R})$. The error introduced via this convention
is encoded in the finite torsion group of the first integer homology group. For Aubry-Mather theory this imprecision makes no essential 
difference, since the theory is insensitive to finite coverings.

Let $b=\dim H_1(M,\R)$ denote the first Betti number of $M$, $\{k_1,\ldots ,k_b\}\subset H_1(M,\R)_\Z$ be a basis of $H_1(M,\R)$, and $\{\alpha_1,\ldots ,\alpha_b\}$ be the dual basis with representatives $\omega_1,\ldots ,\omega_b$. For two points $\overline{x},\overline{y}\in
\overline{M}$ we define the {\it difference} 
$$\overline{y}-\overline{x}\in H_1(M,\R)$$ 
via a $C^1$-curve $\overline{\gamma}\colon [a,b]\to \overline{M}$ connecting 
$\overline{x}$ and $\overline{y}$, by 
  $$\langle\alpha_i,\overline{y}-\overline{x}\rangle :=\int_{\overline{\gamma}} \pi^*\omega_i$$
for all $i\in \{1,\ldots,b\}$.

\subsubsection{Metric structures}
We fix a Riemannian metric $g_R$ on $M$. Further we denote the distance function relative to $g_R$ by $\dist$ and the metric balls of radius $r$ 
around $p\in M$ with $B_r(p)$.  Denote with $\diam(M,g_R)$ the diameter of $(M,g_R)$. The metric $g_R$ induces a norm on every tangent space of $M$ which we abbreviate by $|.|$.

The lift of $g_R$ to $\overline{M}$ will be referred to by $\overline{g_R}$. The distance function relative to $\overline{g_R}$ is $\overline{\dist}$ and the 
metric balls of radius $r\in(0,\infty)$ around $\overline{p}\in\overline{M}$ are $B_r(\overline{p})$.

We will frequently employ the following result from \cite{bu}:
\begin{theorem}\label{3.1}
There exists a unique norm $\|.\|$ on $H_1(M,\R)$, called the stable norm, and a constant $\std<\infty$ such 
that    
$$|\dist(\overline{x},\overline{y})-\|\overline{y}-\overline{x}\||\le \std$$
for any pair $\overline{x},\overline{y}\in \overline{M}$. 
\end{theorem}

We denote with 
$$\dist\nolimits_{\|.\|}(.,.)$$
the distance function on $H_1(M,\R)$ relative to the stable norm.

\subsubsection{Lorentzian geometry}

A Lorentzian metric $g$ on $M$ is a smooth $(0,2)$-tensor field, where at every point $p\in M$ the bilinear form $g_p$ is symmetric, non-degenerate with index $1$. A tangent vector $v\in TM$ is causal if $g(v,v)\le 0$ and $v\neq 0$. 
A non-constant geodesic $\gamma\colon I \to M$ of a Lorentzian manifold $(M,g)$ is {\it causal} if $\dot{\gamma}$ is causal. A causal geodesic in a spacetime is 
{\it future-} or {\it past-pointing} if $\dot{\gamma}$ is future- or past-pointing, respectively. A continuous curve $\eta\colon I\to M$ is {\it future-pointing} if for every 
$t_0\in I$ there exists a convex normal neighborhood $U$ of $\eta(t_0)$ and $\e>0$ such that $(t_0-\e,t_0+\e)\cap I \subseteq \eta^{-1}(U)$ and for every 
pair $s< t\in (t_0-\e,t_0+\e)\cap I$ the unique geodesic in $U$ connecting $\eta(s)$ and $\eta(t)$ is future-pointing. A {\it past-pointing} continuous curve is defined 
analogously. A continuous curve is {\it causal} if it is either future- or past-pointing. A piecewise smooth causal curve $\eta\colon I\to M$ is called {\it timelike} if 
$g(\dot{\gamma},\dot{\gamma})<0$ whenever $\dot{\gamma}$ exists. Define for $p\in M$ the {\it causal future} $J^+(p)$ and the {\it causal past} $J^-(p)$, respectively, by
$$J^\pm(p):=\{q\in M|\; \text{there exists a future- (past-)pointing curve from $p$ to $q$}\}.$$
In the analogous fashion define the {\it chronological future} $I^+(p)$ and the {\it chronological past} $I^-(p)$, respectively, by
\begin{align*}
I^\pm(p):=\{q\in M|\; \text{there exists a future- (past-)pointing timelike curve from $p$ to $q$}\}.
\end{align*}

\begin{definition}\label{D0}\hspace{0cm}
Let $(M,g)$ be a spacetime.
\begin{enumerate}
\item $(M,g)$ is causal if $p\notin J^+(p)$ for all $p\in M$, i.e. $(M,g)$ does not contain any causal loops.
\item $(M,g)$ is globally hyperbolic if $(M,g)$ is causal and the intersections $J^+(p)\cap J^-(q)$ 
are compact for all $p,q\in M$.
\item $(M,g)$ is vicious at $p\in M$ if $M=I^+(p)\cap I^-(p)$.
\end{enumerate}
\end{definition}
Note that $(M,g)$ is vicious at one point of $M$ if and only if $(M,g)$ is vicious at every point if and only if every point lies on a timelike loop. 
Therefore we will only speak of spacetimes being vicious.

We define the {\it length} of a future-pointing curve $\gamma\colon [a,b]\to M$ by
$$L^g(\gamma):=\int_a^b\sqrt{|g(\dot{\gamma}(t),\dot{\gamma}(t))|}dt.$$
Note that every causal curve admits a Lipschitz continuous parameterization.
The {\it time separation} or {\it Lorentzian distance function} is defined as 
\begin{align*}
d\colon \ol{M}\times \ol{M}&\to \R\cup\{\infty\}\\
d(p,q)&:=\sup\{L^g(\gamma)|\; \gamma \text{ future-pointing from $p$ to $q$}\}
\end{align*} 
with the convention $\sup \emptyset :=0$.

\subsubsection{Causality Properties of Class A Spacetimes}\label{sec2}

The results of this section are the subject of \cite{suh103}.

\begin{fact} \label{F1}
Let $(M,g)$ vicious. Then there exists a constant $\fil=\fil(g,g_R)<\infty$ such that any two points $p,q\in M$ can be joined by a future-pointing 
timelike curve with $g_R$-arclength less than $\fil$.
\end{fact}

The stable time cone 
$$\mathfrak{T}$$
of a compact and vicious spacetime $(M,g)$ is the closure of the cone over all homology classes of future-pointing loops in $(M,g)$. It is necessarily 
a convex cone by Fact \ref{F1} and is characterized uniquely by the following property, see \cite[Proposition 8]{suh103}:

\begin{prop}\label{P01a}
The stable time cone $\T$ is the unique cone in $H_1(M,\R)$ such that there exists 
a constant $\err=\err(g,g_R)<\infty$ with 
$$\dist(J^+(x)-x,\T)\le \err$$ 
for all $x\in\overline{M}$, where $J^+(x)-x:=\{y-x|\;y\in J^+(x)\}$ and $\dist$ denotes the Hausdorff distance with respect to $\|.\|$.
\end{prop}

Recall that a sequence of causal curves $\{\gamma_i\colon [a_i,b_i]\to M\}_{i\in \mathbb N}$ is admissible, if $L^{g_R}(\gamma_i)\to\infty$ for $i\to \infty$. 
With $\T^1$ we denote the set of all accumulation points of sequences $\left\{\rho(\gamma_i)\right\}_{i\in\N}$ in $H_1(M,\R)$ of admissible sequences $\{\gamma_i\}_{i\in\N}$ where $\overline{\gamma_i}$ is any lift 
of $\gamma_i$ to $\overline{M}$. The set $\T^1$ is compact since the stable norm of any rotation vector is bounded by $1+\std$.

\begin{theorem}\label{stab2}
Let $(M,g)$ be compact and vicious. Then the following statements are equivalent:
\begin{enumerate}[(i)]
\item[(i)] $(M,g)$ is of class A.
\item[(ii)] $0\notin \T^1$, especially $\T$ is a compact cone.
\item[(iii)] The open interior $(\T^\ast)^\circ$ of $\T^\ast$ is nonempty and for every $\alpha \in (\T^\ast)^\circ$ there exists a smooth $1$-form $\omega\in\alpha$ 
such that $\ker\omega_p$ is spacelike (i.e. $g|_{\ker\omega_p\times \ker\omega_p}>0$) in $(TM_p,g_p)$ for all $p\in M$.
\end{enumerate}
\end{theorem}

\begin{cor}\label{1.10}
Let $(M,g)$ be of class A. Then there exists a constant $C_{g,g_R}<\infty$ such that
$$L^{\overline{g}_R}(\overline{\gamma})\le C_{g,g_R}\overline{\dist}(\overline{p},\overline{q})$$ 
for all $\overline{p},\overline{q}\in \overline{M}$ and $\overline{\gamma}$ a causal curve connecting $\overline{p}$ with $\overline{q}$.
\end{cor}

For $p\in M$ let $\T_p$ be the set of classes $k\in H_1(M,\mathbb Z)_\R$ which can be represented by a timelike future-pointing
loop through $p$. A homology class $h\in H_1(M,\R)$ is called $\T_p$-rational if $nh\in \T_p$ for some positive integer $n$.

\begin{prop}\label{3.2}
For every $R>0$ there exists a constant $K=K(R)<\infty$ such that
    $$B_R(\overline{q})\subseteq I^+(\overline{p})$$
for all $\overline{p},\overline{q}\in \overline{M}$ with $\overline{q}-\overline{p}\in\T$ and $\dist_{\|.\|}(\overline{q}-\overline{p},\partial \T)\ge  K$.
\end{prop}

\begin{theorem}\label{T16}
Let $(M,g)$ be of class A. Then for every $\e >0$ there exists $L_c(\e)<\infty$, such that 
  $$|d(\overline{x},\overline{y})-d(\overline{z},\overline{w})|\le L_c(\e)(\overline{\dist}(\overline{x},\overline{z})+\overline{\dist}(\overline{y},\overline{w})+1)$$
for all $(\overline{x},\overline{y}),(\overline{z},\overline{w})\in \overline{M}\times\overline{M}$ with $\overline{y}-\overline{x},\overline{w}-\overline{z}\in \T_\e$.
\end{theorem}

\subsection{Proofs to Section \ref{secsttise}}

The proof of Theorem \ref{T17} retraces the steps in \cite{bu}.

\begin{definition}
Let $(M,g)$ be of class A. For $\ol x\in \overline M$ and $k\in \mathcal{D}(\ol M,M)$ set 
$$\mathfrak{d}(k):=\sup\{d(\ol x ,\ol x+k)|\; \ol x\in \overline{M}\}.$$
\end{definition}
Since the time separation is continuous and invariant under $\mathcal{D}(\ol M,M)$, the function $\mathfrak{d}(k)$ is finite everywhere. 

\begin{remark}\label{R10}
For every $\e>0$ there exists a $C_1=C_1(\e)<\infty$ such that 
$$|d(\ol x,\ol x+k)-d(\ol y,\ol y+k)|\le C_1$$ 
for any pair of points $\ol x,\ol y\in\ol M$ and 
$k\in \T_\e\cap H_1(M,\Z)_\R$. This immediately implies 
$$|d(\ol x,\ol x+k)-\mathfrak{d}(k)|\le C_1(\e).$$

This can be seen as follows: Let $\ol x,\ol y$ and $k$ be given. It suffices to verify the inequality $d(\ol y,\ol y+k)\le d(\ol x,\ol x+k)+C_1(\e)$. The other 
inequality then follows by symmetry. Without loss of generality we can assume that $\ol x\in I^-(\ol y)$ and $\ol\dist (\ol x,\ol y)\le \fil$. By Proposition 
\ref{3.2} we can choose $k'\in \T_\e\cap H_1(M,\Z)_\R$ such that $\ol x+k'\in I^+(\ol y+k)$ and $\|k-k'\|\le C_2(\e)$ for some $C_2(\e)<\infty$ only depending on 
$\e$. We immediately obtain $d(\ol y,\ol y+k)\le d(\ol x,\ol x+k')$. Now Theorem \ref{T16} and Theorem \ref{3.1} imply:
\begin{align*}
d(\ol y,\ol y+k)\le d(\ol x,\ol x+k')&\le  d(\ol x,\ol x+k)+L(\e)(\ol\dist(\ol x+k,\ol x+k')+1)\\
&\le d(\ol x,\ol x+k)+L(\e)(\|k-k'\|+\std +1)\\
&\le d(\ol x,\ol x+k)+L(\e)(C_2(\e)+\std +1)
\end{align*}
\end{remark}

\begin{lemma}\label{L10}
Let $(M,g)$ be of class A. Then for all $\e>0$ there exists a $C_3(\e)<\infty$, such that 
\begin{enumerate}
\item $z\mathfrak{d}(k)\le \mathfrak{d}(zk)$ for $z=2,3$ and 
\item $2\mathfrak{d}(k)\ge \mathfrak{d}(2k)-C_3(\e)$
\end{enumerate}
for all $k\in \T_\e\cap H_1(M,\Z)_\R$.
\end{lemma}

\begin{proof}
No new ideas are necessary. Theorem \ref{T16} and Fact \ref{F1} are sufficient to follows the steps in \cite{bu}.
\end{proof}

The following lemma is the analogous version of \cite[Lemma 1]{bu}.

\begin{lemma}\label{L11}
Let $C<\infty$ and $F\colon\N\rightarrow [0,\infty)$ be a coarse-Lipschitz function with 
\begin{enumerate}
\item $2F(s)-F(2s)\ge -C$,
\item $F(\kappa s)-\kappa F(s)\ge -C$, for $\kappa=2,3$
\end{enumerate}
and all $s\in\N$. Then there exists $\mathfrak{d}\in \R$ such that $|F(s)-\mathfrak{d} s|\le 2C$
for all $s\in\N$.
\end{lemma}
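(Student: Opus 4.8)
The plan is to mimic the classical subadditivity argument (Fekete-type), but carried out carefully along the dyadic subsequence where the hypotheses give us quantitative control. First I would show that the limit
$$\mathfrak{d}:=\lim_{n\to\infty}\frac{F(2^n)}{2^n}$$
exists and is finite. Set $a_n:=F(2^n)/2^n$. Hypothesis (1) in the form $F(2s)\le 2F(s)+C$ gives $a_{n+1}\le a_n + C/2^{n+1}$, so the sequence $(a_n + C/2^n)$ is non-increasing; since $F\ge 0$ it is bounded below by $0$, hence $a_n$ converges to some $\mathfrak{d}\ge 0$. Summing the telescoping inequality $a_n - a_{n+1} \le -({\rm reverse~direction})$ — more precisely, from $2F(s)-F(2s)\ge -C$ we get $a_n - a_{n+1}\ge -C/2^{n+1}$, and from hypothesis (1) with the coarse-Lipschitz bound we also control the other direction — one obtains for all $n$
$$\mathfrak{d}\le a_n + \frac{C}{2^n},\qquad\text{so}\qquad F(2^n)\ge \mathfrak{d}\, 2^n - C.$$
Conversely, iterating $F(2s)\le 2F(s)+C$ downward from $2^n$ gives $F(2^n)\le 2^n F(1) + C(2^n-1)$, which together with the lower bound pins $a_n$ between $\mathfrak d$ and $\mathfrak d + C/2^n$ after one checks the two one-sided estimates match; the upshot is $|F(2^n)-\mathfrak{d}\,2^n|\le C$ for every $n\in\N$.

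Next I would pass from the dyadic subsequence to all of $\N$. Given an arbitrary $s\in\N$, pick $n$ with $2^{n}\le s< 2^{n+1}$ — actually it is cleaner to pick $n$ with $s\le 2^n<2s$. The ternary hypothesis (2) with $\kappa=2,3$ is what lets me "fill the gap": any integer in the dyadic interval can be reached by combining the bounds $F(2s)\le 2F(s)+C$ and $F(3s)\le 3F(s)+C$ with the coarse-Lipschitz estimate $|F(s')-F(s'')|\le \Lambda|s'-s''|+\Lambda$ (for some constant $\Lambda$, which is what "coarse-Lipschitz" supplies). Concretely, I would argue that $F(s)$ and $\mathfrak d s$ cannot differ by more than $2C$: the lower bound $F(s)\ge \mathfrak d s - 2C$ follows because if it failed we could double/triple $s$ up to a scale comparable to some $2^n$ and contradict $F(2^n)\ge \mathfrak d 2^n - C$ using (2) and homogeneity of the main term; the upper bound $F(s)\le \mathfrak d s + 2C$ follows symmetrically from $F(2^n)\le \mathfrak d 2^n + C$ together with hypothesis (1). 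The constant $2C$ (rather than $C$) is exactly the slack one loses in this doubling-back step, which is why the statement is phrased with $2C$.

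The main obstacle I anticipate is organizing the gap-filling step cleanly: one must be sure that the errors accumulated in passing from a general $s$ to a nearby power of $2$ via a bounded number of doublings/triplings stay bounded by $C$ (not by something growing with $s$), so that the final constant is genuinely $2C$ and independent of $s$. This is precisely where the \emph{coarse}-Lipschitz hypothesis is essential — it controls the additive error at each scale — and where hypotheses (1) and (2) must be used in tandem rather than separately. Everything else is the routine Fekete telescoping argument; since the lemma explicitly says "No new ideas are necessary" is the spirit, I would simply cite that this is the reformulation of Lemma 1 in \cite{bu} adapted to functions on $\N$, and carry out the two telescoping estimates above.
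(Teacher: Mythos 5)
Your first step is sound: hypotheses (1) and (2) with $\kappa=2$ together give $|a_{n+1}-a_n|\le C/2^{n+1}$ for $a_n:=F(2^n)/2^n$, so both $a_n+C/2^n$ and $a_n-C/2^n$ are monotone with common limit $\mathfrak{d}$, yielding $|F(2^n)-\mathfrak{d}\,2^n|\le C$; the detour through the crude bound $F(2^n)\le 2^nF(1)+C(2^n-1)$ is unnecessary. The second step, however, has a genuine gap. To begin with, you invoke $F(3s)\le 3F(s)+C$, but hypothesis (2) with $\kappa=3$ gives only the one-sided bound $F(3s)\ge 3F(s)-C$; an upper bound is available only for $\kappa=2$, via (1). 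More importantly, the scheme of ``doubling/tripling $s$ up to a scale comparable to some $2^n$'' cannot run in a bounded number of steps: if $s$ has a prime factor outside $\{2,3\}$, then no $2^a3^bs$ is ever a power of $2$, and the nearest power of $2$ to $2^ks$ differs from it by an amount of order $2^ks$ itself, so a single coarse-Lipschitz jump already costs $\Lambda\cdot O(2^ks)$ rather than $O(C)$. Your own caveat --- that the accumulated errors should ``stay bounded by $C$ (not by something growing with $s$)'' --- is exactly the point that fails under your construction.

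What is actually needed is the irrationality of $\log_23$. Setting $L(t):=\lim_k F(t\,2^k)/(t\,2^k)$ (which exists by the two-sided $\kappa=2$ estimates, with $|F(t)-L(t)\,t|\le C$), one has $L(1)=\mathfrak{d}$, $L(2t)=L(t)$, $L(3t)\ge L(t)$ from $\kappa=3$, and $L$ bounded by coarse-Lipschitz. Irrationality supplies arbitrarily large $b$ and $m$ with $2^m<3^b<2^m(1+\e)$; then comparing $F(3^b2^k)$ with $F(2^{m+k})$ via coarse-Lipschitz gives a \emph{relative} error at most $\Lambda\e$ plus a term vanishing as $k\to\infty$, which forces $L(3^b)\le\mathfrak{d}+\Lambda\e$ along this subsequence. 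Combined with the monotonicity of $b\mapsto L(3^b)$ and $L(1)=\mathfrak{d}$, this yields $L(3^b)=\mathfrak{d}$ for all $b$, hence $|F(m)-\mathfrak{d}m|\le C$ for every $2,3$-smooth $m$. A second use of the same irrationality shows the $2,3$-smooth numbers are multiplicatively dense, so a coarse-Lipschitz transfer from a general $n$ to a nearby smooth integer gives $F(n)/n\to\mathfrak{d}$, hence $L(s)=\mathfrak{d}$ and $|F(s)-\mathfrak{d}s|\le C$ for all $s$. The constant $\Lambda$ enters only through terms that vanish in the limit and never appears in the final bound --- the argument in fact delivers the constant $C$, so the $2C$ in the statement is a comfortable overshoot. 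Note that the paper supplies no proof of this lemma, deferring to Lemma~1 of \cite{bu}.
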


\begin{proof}[Proof of Theorem \ref{T17}]
For $k\in H_1(M,\Z)_\R\cap \T^\circ$ there exists $\e>0$ with $k\in \T_\e$. Set $F_k\colon \N\to\R$, $s\mapsto \mathfrak{d}(s k)$. Define $\mathfrak{l}(k):=
\mathfrak{d}$ where $\mathfrak{d}$ is given by applying Lemma \ref{L10} and Lemma \ref{L11} to $F_k$. Next define $\mathfrak{l}|_{\T^\circ}$ as the concave 
hull of $\mathfrak{l}|_{\T^\circ\cap H_1(M,\Z)_\R}$ and $\mathfrak{l}|_{\partial\T}$ by property (4) in the theorem. 

The properties (2) and (3) follow directly from the respective properties of $\mathfrak{l}|_{\T^\circ\cap H_1(M,\Z)_\R}$. The positive homogeneity follows
directly from Lemma \ref{L10} and Lemma \ref{L11}. The reversed triangle inequality follows from the reverse triangle inequality for the time separation and 
Remark \ref{R10}.

Last we prove property (1): Let $\ol x,\ol y\in \ol M$ with $\ol y-\ol x\in \T_\e$ be given. According to Proposition \ref{3.2} we can choose $k\in 
H_1(M,\Z)_\R\cap \T_\e$ with $\ol \dist(\ol y,\ol x+k)\le C_4(\e)$ for some $C_4(\e)<\infty$ not depending on $\ol x,\ol y$. Now Theorem \ref{T16} implies that 
$$|d(\ol x,\ol y)-d(\ol x,\ol x+k)|\le L(\e)(\ol\dist (\ol y,\ol x+k)+1)\le L(\e)(C_4(\e)+1).$$
Since $\mathfrak{l}$ is concave and homogeneous of degree one, it follows that $\mathfrak{l}|_{\T_\e}$ is $L'$-Lipschitz for all $\e>0$ and some $L'$ depending
on $\e$. Finally from Remark \ref{R10} and Lemma \ref{L11} we deduce that 
$$|d(\ol x,\ol x+k)-\mathfrak{l}(k)|\le 3C_3(\e).$$
Consequently we have
$$|d(\ol x,\ol y)-\mathfrak{l}(h)|\le |d(\ol x,\ol y)-d(\ol x,\ol x+k)|+|d(\ol x,\ol x+k)-\mathfrak{l}(k)|+|\mathfrak{l}(k)-\mathfrak{l}(h)|\le  \ol C(\e)$$
for some $\ol C(\e)<\infty$.
\end{proof}

Denote with $\inj(M,g)_p$ the injectivity radius of $(M,g)$ at $p$ relative to $g_R$ and $\inj(M,g):=\inf_{p\in M}\inj(M,g)_p$. 

\begin{proof}[Proof of Proposition \ref{P9}]
Consider $\alpha\in \partial\T^\ast$ such that $\alpha^{-1}(0)\cap \T\cap H_1(M,\Z)_\R=\emptyset$. Assume that there exists a homology class $h\in 
\alpha^{-1}(0)\cap \T$ with $\mathfrak{l}(h)>0$. 

Choose an admissible sequence $\gamma_n\colon [a_n,b_n]\to M$ of maximizers with $|\dot{\gamma}_n|\equiv 1$ and  
$$\left(\rho(\gamma_n),\frac{L^g(\gamma_n)}{b_n-a_n}\right)\to (h,\mathfrak{l}(h)).$$
Since $\mathfrak{l}(h)>0$ there exists $v\in \Time(M,[g])$ and $\e,\delta>0$ such that 
$$\frac{1}{b_n-a_n}(\dot{\gamma}_{n})_{\sharp}(\mathcal{L}^1|_{[a_n,b_n]})(B_\e(v))\ge \delta$$
for infinitely many $n$ where $\mathcal{L}^1|_{[a_n,b_n]}$ denotes the Lebesgue measure on $[a_n,b_n]$. Denote $p:=\pi(v)$ and choose a geodesically convex 
neighborhood $U\subset M$ of $p$ and a $t\in (0,\inj(M,g))$. By diminishing $\e$ and $\delta$ we can assume that $B_{\e}(v)\subset\Time(M,[g])$ and 
$B_{\e}(p)\subset I^+_U(\exp^g(-tw))\cap I^-_U(\exp^g(tw))$ for every $w\in B_{\e}(v)$. This is due to the fact that $\mathfrak{l}(h)>0$ and 
$L^g(\gamma_n)\ge \frac{\mathfrak{l}(h)}{2}(b_n-a_n)$ for $n$ sufficiently large.

Consider the sets $A_n:=\{t\in[a_n,b_n]|\, \dot{\gamma}_n(t)\in B_{\e}(v)\}$ and their connected components $\{A_{n,\nu}\}_{1\le \nu\le r(n)}$. Choose for every 
$1\le\nu\le r(n)$ one $t_{n,\nu}\in A_{n,\nu}$. Then the double sequence $\ol\gamma_n(t_{n,\nu+1})-\ol\gamma_n(t_{n,\nu})$ ($\ol\gamma_n$ any lift of 
$\gamma_n$) is bounded away from $0\in H_1(M,\R)$, because otherwise we could construct a nullhomologous timelike loop in $(M,g)$ by joining 
$\gamma_n(t_{n,\nu+1})$ and $\gamma_n(t_{n,\nu}+t)$ via a future-pointing arc in $U$. 
The Lebesgue measure of an individual $A_{n,\nu}$ is bounded from above by $2\e$. 
Therefore the number of connected components of $A_n$ is bounded from below by $\frac{\delta (b_n-a_n)}{2\e}$. Now the 
number of connected components $A_{n,\nu'}$ such that $\dist(A_{n,\nu'},A_{n,\nu'+1})>\frac{4\e}{\delta}$ is bounded 
from above by $\frac{\delta (b_n-a_n)}{4\e}$. Thus the number of connected components $A_{n,\nu'}$ such that 
$$\|\gamma_n(t_{n,\nu'+1})-\gamma_n(t_{n,\nu'})\|\le\frac{4\e}{\delta}+\std$$ 
is bounded from below by $\frac{\delta (b_n-a_n)}{4\e}$. 

By the condition on $\e$ we can deform $\gamma_n|_{[a_{n},t_{n,2}]}$ to a future-pointing curve 
$\gamma_n^1\colon [a_{n},t_{n,2}]\to M$ homotopic with fixed endpoints to $\gamma_n|_{[a_{n},t_{n,2}]}$
and $\gamma_n^1(t_{n,1})=p$. Continue this operation inductively for all $1\le \nu \le r(n)$. This yields a 
future-pointing curve $\gamma_n^{r(n)}\colon [a_n,b_n]\to M$ homotopic with fixed endpoints to $\gamma_n$
and $\gamma^{r(n)}_n(t_{n,\nu})=p$ for all $1\le \nu \le r(n)$. Consequently we have
$$k_{n,\nu}:=[\gamma^{r(n)}_n|_{[t_{n,\nu},t_{n,\nu+1}]}]\in \T\cap H_1(M,\Z)_\R$$
and $\alpha(k_{n,\nu})\ge 0$ for all $n$ and $\nu$, since $\alpha$ is a support function of $\T$. 
But then, since $\alpha(\rho(\gamma_n))\to 0$, there exists a bounded sequence of 
$\{k_{n(i),\nu(i)}\}_{i\in\N}$ such that $\alpha(k_{n(i),\nu(i)})\to 0$ for $i\to \infty$. None of the classes $k_{n(i),\nu(i)}$ 
can be the zero class, since $(\overline{M},\overline{g})$ is causal. Therefore $\alpha^{-1}(0)\cap \T$ contains an 
integer class which is impossible by the assumptions.
\end{proof}

\subsection{Proofs to Section \ref{secinme}}

\begin{proof}[Proof of Lemma \ref{L0c}]
Let $f\colon M\to \R$ be a Lipschitz continuous function. For a $C^1$-curve $\gamma\colon I\to M$ the composition $f\circ \gamma\colon I \to \R$ is differentiable 
almost everywhere. Let $v\in T^{1}M$ and $\gamma\colon I\to M$ be a curve tangential to $v$ in $s\in I$. Then the existence and the value of 
$\left.\frac{d}{dt}\right|_{t=s}(f\circ\gamma)$ does not depend on $\gamma$. Therefore we can define
\begin{align*}
\defa(\partial f):=\{v\in T^1M|\,& \text{there exists a curve }\gamma \text{ with $\dot{\gamma}(0)=v$ s.th. }\\
  &\lim_{t\to 0}\frac{f\circ\gamma(t)-f\circ \gamma(0)}{t}=:\partial_v f\text{ exists}\}.
\end{align*}
By Rademacher's Theorem every Lipschitz function is differentiable almost everywhere. Denote the set of points where $f$ is differentiable with $\defa(df)$. Since 
we have $TM_p\subset\pi_{TM}^{-1}(\defa(df))$ for all $p\in \defa(df)$ we know that $\pi_{TM}^{-1}(\defa(df))$ is a Borel set of full Lebesgue measure. Further, 
since $\pi_{TM}^{-1}(\defa(df))\subset \defa(\partial f)$ and the Lebesgue measure is complete, $\defa(\partial f)$ is a Borel set of full Lebesgue measure. Define 
the {\it partial differential} $\partial f$ of $f$ as
$$\partial f_v:=\begin{cases}&\partial_v f, \text{ for }v\in \defa(\partial f),\\
  &0, \text{ else. }\end{cases}$$
$\partial f$ is a bounded measurable function on $T^1M$.

The proof is an application of Fubini's Theorem and the fact that the pregeodesic flow  $\Phi\colon TM\setminus \mathcal{Z}\times \R\to TM$ satisfies the following equation
  $$ \frac{d}{dt}(\pi \circ \Phi(v,t))=\Phi(v,t)$$
for all $(v,t)\in TM\setminus \mathcal{Z}\times \R$.
\end{proof}

\begin{proof}[Proof of Proposition \ref{ct}] 
$\rho(\mathfrak{M}_g)\subseteq \T$: Let $\mu\in\mathfrak{M}_g$. There exists a sequence of positive, finite 
combinations $\sum_i \lambda_{i,n}\mu_{i,n}$ of $\Phi$-ergodic probability measures $\mu_{i,n}$ 
approximating $\mu$ in the weak-$\ast$ topology. Since these combinations are positive, 
the $\mu_{i,n}$ are supported in the future-pointing vectors as well. 
Choose $\mu_{i,n}$-generic pregeodesics $\gamma_{i,n}$. We have 
$$\frac{1}{2T}(\gamma_{i,n})_\sharp(\mathcal{L}^1|_{[-T,T]})\stackrel{\ast}{\rightharpoonup}\mu_{i,n}$$
for $T\to \infty$ by the Birkhoff ergodic Theorem ($\mathcal{L}^1$ denotes the Lebesgue measure on $\R$). Consequently $\mu$ is approximated by 
$$\sum_i\frac{\lambda_{i,n}}{2T}(\gamma_{i,n})_\sharp \left(\mathcal{L}^1\left|_{\left[-\frac{T}{\lambda_{i,n}},\frac{T}{\lambda_{i,n}}\right]}\right.\right)$$
in the weak-$\ast$ topology for $n,T\to \infty$. Choose future-pointing curves of length less than $\fil$ connecting
$\gamma_{i,n}(\frac{T}{\lambda_{i,n}})$ with $\gamma_{i+1,n}(-\frac{T}{\lambda_{i+1,n}})$. Joining these curves in the 
obvious manner defines a sequence of future-pointing curves $\zeta_{n,T}\colon [-\overline{T},\overline{T}]\to M$ such that 
$(2\ol T)^{-1}(\zeta_{n,T})_\sharp(\mathcal{L}^1|_{[-\ol T,\ol T]})$ approximates $\sum_i \lambda_{i,n}
\mu_{i,n}$ in the weak-$\ast$ topology ($\ol T:=\sum_i\frac{T}{\lambda_{i,n}}$). Since $\rho(\zeta_{n,T_n})\to \rho(\mu)$ 
for $n\to\infty$ and an appropriate choice of $T_n\to\infty$ the rotation vector of $\mu$ will be contained in the stable time cone.

$\T\subseteq \rho(\mathfrak{M}_g)$: Let $\gamma_n\colon [-T_n,T_n]
\rightarrow M$ be a sequence of future-pointing curves and $C\in [0,\infty)$ with $C\rho(\gamma_n)\rightarrow 
h\in\T$. Choose a future-pointing pregeodesic $\zeta_n\colon [-\overline{T}_n,\overline{T}_n]\to M$ homotopic 
with fixed endpoints to $\gamma_n$. Further choose $\overline{C}_n\ge 0$ such that $\overline{C}_n
\rho(\zeta_n)=C\rho(\gamma_n)$. The sequence $\{\overline{C}_n\}_{n\in\N}$ is bounded by Corollary \ref{1.10}. 
Set $\mu_n:=\frac{\overline{C}_n}{2T_n}(\zeta_n)_\sharp(\mathcal{L}^1|_{[-\overline{T}_n,\overline{T}_n]})$. 
Then a subsequence of $\{\mu_n\}$ converges in the weak-$\ast$ topology to a finite invariant Borel measure $\mu$ with 
$\rho(\mu)= h$. By construction the support of $\mu$ is a subset of the future-pointing $g_R$-unit vectors.

The proof of $\mathfrak{l}(h)=\sup\{\mathfrak{L}(\mu)|\rho(\mu)=h\}$ uses the same construction as before except for the substitution of
$\rho$ for the length of curves. 
\end{proof}

\begin{lemma}\label{L12}
Let $(M,g)$ be of class A. Then the set 
$$\rho^{-1}(h)\subset \mathfrak{M}_g\subset(C^0(T^1M),\|.\|_\infty)'$$ 
is bounded for every $h\in \T$.
\end{lemma}

\begin{proof}
Assume that $\{\mu\in\mathfrak{M}_g |\; \rho(\mu)=h\}$ is unbounded. Then there exists a sequence of probability measures $\mu_n\in\mathfrak{M}_g$ with 
$\rho(\mu_n)\to 0$ for $n\to \infty$. Like in the proof to Proposition \ref{ct} we can choose a convex combination $\sum \lambda_{i,n}\mu_{i,n}$ of ergodic 
probability measures $\mu_{i,n}$ approximating $\mu_n$ in the weak-$\ast$ topology. Since $\T$ contains no nontrivial linear subspaces (Theorem \ref{stab2} 
(ii)), there exists a sequence of ergodic probability measures $\mu_{i_n,n}$ with $\rho(\mu_{i_n,n})\to 0$ for $n\to \infty$. Choose for every $n\in\N$ a 
$\mu_{i_n,n}$-generic pregeodesic $\gamma_n\colon \R\to M$ and $T_n>0$ such that 
$$\|\rho(\gamma_n|_{[-T_n,T_n]})-\rho(\mu_{i_n,n})\|\le \frac{1}{n}.$$
Therefore we have constructed an admissible sequence of future-pointing curves with unbounded Riemannian arclength whose rotation vectors converge to $0$. This contradicts Theorem \ref{stab2} 
(ii), since in this case $\T^1$ is not disjoint from $0\in H_1(M,\R)$.
\end{proof}

\begin{cor}\label{C12}
For every $h\in \T$ there exists a maximal measure $\mu \in \mathfrak{M}_g$ with rotation vector $h$, i.e.
$\LF(\mu)=\mathfrak{l}(\rho(\mu))$.
\end{cor}

\begin{proof}
Use Lemma \ref{L12}, the weak-$\ast$ compactness of $\mathfrak{M}^1_g$ and the fact that $\LF$ as well as $\rho$ are continuous with respect to the 
weak-$\ast$ topology.
\end{proof}

\begin{proof}[Proof of Theorem \ref{P10}]
Let $\alpha\in (\T^\ast)^\circ$ and consider $\Gamma$ the subgraph of $\mathfrak{l}|_{\alpha^{-1}(1)\cap \T}$. Choose an extremal 
point $(h,\mathfrak{l}(h))$ of $\Gamma$ and consider $\lambda_\infty>0$ maximal among all $\lambda >0$ with $(\rho(\mu),\LF(\mu))=\lambda 
(h,\mathfrak{l}(h))$ for some $\mu\in\mathfrak{M}^1_g$. The preimage of $\lambda_\infty(h,\mathfrak{l}(h))$ under the map $\mu\in\mathfrak{M}^1_g\mapsto 
(\rho(\mu),\LF(\mu))$ is a nonempty, compact and convex subset of $\mathfrak{M}^1_g$. Therefore it contains extremal points by the Theorem of Krein-Milman. 
We want to show that these extremal 
points are extremal points of $\mathfrak{M}^1_g$ as well. Assume that there exists an extremal point $\mu$ of $\{\nu\in\mathfrak{M}^1_g|\;(\rho(\nu),\LF(\nu))=
\lambda_\infty(h,\mathfrak{l}(h))\}$ that is not an extremal point of $\mathfrak{M}^1_g$. Then there exist $\nu_0,\nu_1\in\mathfrak{M}^1_g$ and $\eta\in(0,1)$ with 
$\mu=(1-\eta)\nu_0+\eta \nu_1$. In this case both $\nu_0$ and $\nu_1$ are maximal since $\mu$ is maximal. We have $\rho(\nu_{0,1})\notin \pos\{\rho(\mu)\}$ 
since else $\LF(\mu)$ or $\lambda_\infty$ would not be maximal. More precisely we know that either both $\rho(\nu_0)$ and $\rho(\nu_1)\in\pos\{\rho(\mu)\}$ or 
$\rho(\nu_0)$ and $\rho(\nu_1)\notin \pos\{\rho(\mu)\}$. If $\rho(\nu_0),\rho(\nu_1)\in\pos\{\rho(\mu)\}$ we can choose $\eta_0,\eta_1\le 1$ with $\rho(\nu_i)
=\eta_i\rho(\mu)$ since $\lambda_\infty$ was chosen maximal. But then we would obtain $\eta_0=\eta_1=1$ and $\nu_0,\nu_1\in \{\nu\in\mathfrak{M}^1_g|\;
\rho(\nu)=\rho(\mu)\}$. This implies $\nu_0,\nu_1\in \{\nu\in\mathfrak{M}^1_g|\;(\rho(\nu),\LF(\nu))=\lambda_\infty(h,\mathfrak{l}(h))\}$ and a contradiction to 
the assumption follows  that $\mu$ is an extremal point of that set.

In the other case $\rho(\nu_0),\rho(\nu_1)\notin \pos\{\rho(\mu)\}$ we have
$$\pos\{\conv\{(\rho(\nu_0),\LF(\nu_0)),(\rho(\nu_1),\LF(\nu_1))\}\}\subset \graph(\mathfrak{l}).$$
This contradicts our assumption that $(h,\mathfrak{l}(h))$ is an extremal point of the subgraph of $\mathfrak{l}|_{\alpha^{-1}(1)}$. Thus any extremal point of 
$\{\nu\in\mathfrak{M}^1_g|\;(\rho(\nu),\LF(\nu))=\lambda_\infty(h,\mathfrak{l}(h))\}$ is an  extremal point of $\mathfrak{M}^1_g$.

It is well known that the extremal points of $\mathfrak{M}^1_g$ are ergodic measures. In this case they are maximal ergodic measures. Choose one maximal 
ergodic measure for every extremal point of the subgraph of $\Gamma$. The only point left to note is that $\Gamma$ contains at least 
$b$-many extremal points, since $(h,\mathfrak{l}(h))$ is an extremal points of $\Gamma$ if $h$ is a extremal point of $\T\cap \alpha^{-1}(1)$. 
Together with the fact that $\T\cap \alpha^{-1}(1)$ contains at least $b$-many extremal points, this shows our claim.
\end{proof}

\subsection{Proofs to Section \ref{sec5}}

\begin{lemma}\label{L15}
Let $(M,g)$ be a compact spacetime and $(M',g')$ a Lorentzian cover. Further let $l,L\in (0,\infty)$ and $\tau\colon M'\to \R$ be a $L$-Lipschitz $l$-pseudo-time 
function of $(M',g')$. Then there exists $\e=\e(l,L)>0$ such that
$$\tau(q')-\tau(p')\ge \e \dist(p',q')$$
for all $p',q'\in M'$ with $q'\in J^+(p')$.
\end{lemma}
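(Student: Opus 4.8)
The plan is to reduce the global lower bound to a compactness argument built from the local one. The hypothesis gives us, for each $p'\in M'$, a convex normal neighborhood $U_{p'}$ on which $\tau(q')-\tau(p')\ge l\,d(p',q')$ for $q'\in J^+_{U_{p'}}(p')$. By equivariance (or rather: since $M$ is compact and $(M',g')$ covers it, and the relevant estimates are invariant under deck transformations), one can fix $\rho>0$ so small that $B_{2\rho}(p')\subset U_{p'}$ for every $p'$, with $\rho$ uniform in $p'$; moreover, again by compactness of $M$, there is a uniform constant $c_0>0$ such that for any $p'$ and any future pointing causal $q'\in B_\rho(p')$ one has $d(p',q')\ge c_0\dist(p',q')$ — this is the statement that near a point the time separation dominates the Riemannian distance along causal directions, which holds locally and then uniformly by the covering/compactness. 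Hence on this uniform scale we get $\tau(q')-\tau(p')\ge l\,c_0\,\dist(p',q')$ whenever $q'\in J^+(p')\cap B_\rho(p')$.

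Next I would treat the long-range case by chaining. Suppose $q'\in J^+(p')$ with $\dist(p',q')$ arbitrary. Using Corollary~\ref{1.10} (for the class A case; in the general compact case one uses instead a causal curve from $p'$ to $q'$ together with the fact that its $g_R$-length is finite, and the limit-curve / causal-ladder machinery), pick a future pointing causal curve $\gamma$ from $p'$ to $q'$ and subdivide it into consecutive points $p'=x_0,x_1,\dots,x_N=q'$ with each $x_{j}\in J^+(x_{j-1})$ and $\dist(x_{j-1},x_j)\le\rho$. Applying the local estimate to each link and summing the telescoping left-hand side gives
$$\tau(q')-\tau(p')=\sum_{j=1}^N\bigl(\tau(x_j)-\tau(x_{j-1})\bigr)\ge l\,c_0\sum_{j=1}^N\dist(x_{j-1},x_j)\ge l\,c_0\,\dist(p',q'),$$
the last inequality by the triangle inequality for $\dist$. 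So $\e:=l\,c_0$ works, and $\e$ depends only on $l$ and (through $c_0$ and $\rho$) on $L$ and the fixed geometry — one checks $L$ enters because the Lipschitz bound is what lets us control how fine a subdivision is needed and keeps the chained estimate from degenerating; more precisely, $L$ is not actually needed for this direction of the inequality, but it is recorded because the companion statement (that $\tau$ is genuinely a time function, hence the remark preceding the lemma) uses it. If one wants $\e=\e(l,L)$ honestly, note that the subdivision count $N$ can be taken $\le \dist(p',q')/\rho + 1$ and the Lipschitz constant bounds each increment $|\tau(x_j)-\tau(x_{j-1})|\le L\rho$, so no blow-up occurs.

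The main obstacle is establishing the uniform local comparison $d(p',q')\ge c_0\dist(p',q')$ for causal pairs in a small ball, with $c_0$ and the ball radius independent of $p'$. Pointwise this is classical — in a convex normal neighborhood the time separation is realized by the radial geodesic and comparable to the flat model — but one must be careful that the neighborhoods $U_{p'}$ supplied by the pseudo-time-function hypothesis may vary with $p'$ and need not be uniform, whereas the comparison constant must be. The fix is to decouple: use a \emph{fixed} uniform family of convex normal neighborhoods (which exists by compactness of $M$, pulled back to $M'$) of radius $\rho$ to get $c_0$, and separately shrink $\rho$ if necessary so that $B_\rho(p')$ lies inside the hypothesis neighborhood $U_{p'}$ — but this last step is exactly where one needs that the $U_{p'}$ can be taken uniform, which again follows from compactness of $M$ since ``contains a convex normal neighborhood of radius $r$ on which the pseudo-time inequality holds'' descends to a statement about finitely many coordinate patches downstairs. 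Once that uniformity is in hand, the chaining argument above is routine.
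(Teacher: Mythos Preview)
Your chaining reduction to a local statement is fine and matches the paper. The gap is in the local step: the inequality you claim, $d(p',q')\ge c_0\,\dist(p',q')$ for causal pairs in a small ball, is simply false. Take $q'\in\partial J^+_U(p')$, i.e.\ $q'$ on the local light cone; then $d(p',q')=0$ while $\dist(p',q')>0$. The time separation does \emph{not} dominate the Riemannian distance along causal directions --- it vanishes along lightlike ones. The ``flat model'' comparison you invoke does not help, since the same failure already occurs in Minkowski space.

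This is also why your remark that ``$L$ is not actually needed for this direction of the inequality'' is wrong: the Lipschitz constant is essential, and the paper's constant depends on it explicitly as $\e'\sim L'\min\{1,(l'/2L')^2\}$. The paper's argument is a perturbation: reduce to $q'\in\partial J^+_U(p')$, linearize via $\exp_{p'}^{-1}$ to the Minkowski model, and for a lightlike $v$ move to the nearby timelike point $v+\beta_2 N(v)$, where $N(v)$ is the Euclidean inward normal to the light cone. At that point the $l$-pseudo-time hypothesis gives $\tau'(v+\beta_2 N(v))-\tau'(0)\ge l'|v+\beta_2 N(v)|_1$; the $L$-Lipschitz bound then lets you pay $L'\beta_2$ to return to $v$. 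Optimizing $\beta_2$ (the paper takes $\beta_2=(l'/2L')^2|v|_0$) yields $\tau'(v)-\tau'(0)\ge \e'|v|_0$ with $\e'>0$ depending on $l',L'$. You need to replace your false local comparison by an argument of this shape.
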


Recall the definition of $\defa(\partial f)$ for a Lipschitz function $f$. Then Lemma \ref{L15} implies $\partial_v\tau\ge \e|v|$ for all future-pointing 
$v\in \defa(\partial\tau)$. We obtain the following Corollary for the almost everywhere defined total differential of $\tau$.

\begin{cor}\label{C15}
Under the assumptions of Lemma \ref{L15} we have 
$$-d\tau^\sharp_{p'}\in \Time(M',[g'])^{\e'}$$
for some $\e'>0$, whenever $d\tau_{p'}$ exists.
\end{cor}

\begin{proof}[Proof of Lemma \ref{L15}]
Denote with $g'_R$ the lift of $g_R$ to $M'$. Let $p',q'\in M'$. We can assume $\dist(p',q')$ to be as small as we wish. Just observe that for $q'\in J^+(r')$ 
and $r'\in J^+(p')$ with $\tau(q')-\tau(r')\ge \e\dist(r',q')$ and $\tau(r')-\tau(p')\ge \e\dist(p',r')$, we have 
\begin{align*}
\tau(q')-\tau(p')&=\tau(q')-\tau(r')+ \tau(r')-\tau(p')\ge \e\dist(r',q')+\e\dist(p',r')\\
&\ge \e\dist(p',q').
\end{align*}
Consequently we can assume that $p'$ and $q'$ are contained in a convex normal neighborhood $U$ such that $\partial(J^+_U(p'))\cap \partial(J^-_U(q'))\neq 
\emptyset$, i.e. $q'\in J^+_U(p')$. Under this assumption it suffices to prove the claim for $q'\in\partial(J^+_U(p'))$. We have
$$\tau(q')-\tau(p')\ge \tau(q'')-\tau(p')\ge \e\dist(p',q'')\ge \frac{\e}{2}\dist(p',q')$$
if $\dist(p',q'')\ge \frac{1}{2}\dist(p',q')$. In the other case $\dist(q'',q')\ge \frac{1}{2}\dist(p',q')$ we get
$$\tau(q')-\tau(p')\ge \tau(q')-\tau(q'')\ge \e\dist(q'',q')\ge \frac{\e}{2}\dist(p',q').$$
Further it suffices to consider the case $\dist(p',q'')\ge \frac{1}{2}\dist(p',q')$, since the other case follows 
from this one by reversing the time-orientation and replacing $\tau$ by $-\tau$.
Consequently we are done if we prove the claim for $p',q'\in M'$ such that there exists a convex normal neighborhood
$U$ of $p',q'$ and $q'\in \partial(J^+_U(p'))$. 

With the local equivalence of Riemannian metrics, this reduces the problem to the vector space $TM'_{p'}$ together with 
the Lorentzian metric $g'_{p'}$ and Riemannian metric $(g'_R)_{p'}$.
Since any two scalar products on $TM'_{p'}$ are equivalent, we can assume that $(TM'_{p'},g'_{p'},(g'_R)_{p'})$
 is isometric to $(\R^m,\langle.,.\rangle_1,\langle.,.\rangle_0)$, where 
$\langle .,. \rangle_1:=-(e^\ast_0)^2+\sum_{i=1}^{m-1} (e^\ast_i)^2$ and 
$\langle .,.\rangle_0:=\sum_{i=0}^{m-1} (e^\ast_i)^2$ for the dual basis $\{e_0^\ast,\ldots ,e_{m-1}^\ast\}$ of the 
standard basis $\{e_0,\ldots ,e_{m-1}\}$ of $\R^m$.
We can further assume that $e_0$ is future-pointing by applying the isometry $(\lambda^0,\ldots,\lambda^{m-1})
\mapsto (-\lambda^0,\lambda^1,\ldots,\lambda^{m-1})$ of $(\R^m,\langle.,.\rangle_1,\langle.,.\rangle_0)$ if necessary.
Denote the set of lightlike future-pointing vectors in $(\R^m,\langle.,.\rangle_1)$ with $\Light_m$.
Set $|v|_i:=\sqrt{|\langle v,v\rangle_i|}$, for $i=0,1$.
Now the claim is equivalent to the following problem. Given $l',L'\in (0,\infty)$, an open starshaped neighborhood 
$U$ of $0\in\R^m$ and a $L'$-Lipschitz function $\tau'\colon U\to\R$ with $\tau'(w)-\tau'(0)\ge l'|w|_1$ for 
all future-pointing vectors $w\in U$. Then there exists $\e'=\e'(l',L')>0$ such that 
$\tau'(v)-\tau'(0)\ge \e'|v|_0$ for all $v\in \Light_m\cap U$.

Let $v\in\Light_m$ be given. Define $N\colon\Light_m\to \R^m$ to be the Euclidian unit normal to the light cone with 
$e_0^\ast \circ N(.)>0$. Note that $N(v)\in \Light_m$ and $\langle v,N(v)\rangle_1 =-|v|_0$ for all $v\in \Light_m$ for our 
choice of $\langle.,.\rangle_0$ and $\langle.,.\rangle_1$. Then for $\beta_1,\beta_2\ge 0$ 
we have $|\beta_1 v+\beta_2 N(v)|_1=\sqrt{2\beta_1\beta_2|v|_0}$ and 
$$\dist\nolimits_{0}(\beta_1 v+\beta_2 N(v),\Light\nolimits_m)=\min\{\beta_1|v|_0,\beta_2\},$$
where $\dist_0$ denotes the distance relative to $|.|_0$.
For $\beta_2(L',v):=\left(\frac{l'}{2L'}\right)^2|v|_0$ we have  
\begin{align*}
|v+\beta_2(L',v)N(v)|_1=&\sqrt{2|v|_0\left(\frac{l'}{2L'}\right)^2|v|_0}\ge 
\frac{2L'}{l'}\min\left\{|v|_0,\left(\frac{l'}{2L'}\right)^2|v|_0\right\}\\
=&\frac{2L'}{l'}\dist\nolimits_{0}(v+\beta_2(L',v)N(v),\Light\nolimits_m).
\end{align*}
Then we have
\begin{align*}
\tau'(v)&\ge \tau'(v+\beta_2(L',v)N(v))-L'\dist\nolimits_{0}(v+\beta_2(L',v)N(v),\Light\nolimits_m)\\
&\ge \tau'(0)+l'|v+\beta_2(L',v)N(v)|_1-L'\dist\nolimits_{0}(v+\beta_2(L',v)N(v),\Light\nolimits_m)\\
&\ge \tau'(0) +L'\dist\nolimits_{0}(v+\beta_2(L',v)N(v),\Light\nolimits_m)\\
&= \tau'(0)+L'\min\left\{1,\left(\frac{l'}{2L'}\right)^2\right\}|v|_0=:\tau'(0)+\e'|v|_0.
\end{align*} 
Let $p'\in M'$ be given. Choose a convex normal neighborhood $U$ of $p'$ and $V\subset TM'_{p'}$ such that 
$\exp^{g'}_{p'}|_V\colon V\to U$ is a diffeomorphism. Set $\tau':=\tau\circ \exp^{g'}_{p'}|_V$. 
Since $M'$ is the cover of the compact manifold $M$, there exists a constant $L'=L'(L)<\infty$, independent of $p'$, 
such that $\tau'$ is $L'$-Lipschitz. Note that $\tau'$ is a $l$-pseudo time function. This finishes the proof.  
\end{proof}

\begin{lemma}\label{L15a}
Let $(M,g)$ be a compact and vicious spacetime, $\alpha\in H^1(M,\R)$ and $f\colon \overline{M}\to \R$ an 
$\alpha$-equivariant time function of $(\overline{M},\overline{g})$. Then we have $\alpha \in (\T^\ast)^\circ$.
\end{lemma}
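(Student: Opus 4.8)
The plan is to show that $\alpha$ pairs positively with every nonzero element of $\T$, and then upgrade ``positive on $\T \setminus \{0\}$'' to ``in the interior of $\T^\ast$'' using the fact (Theorem~\ref{stab2}) that for a class A spacetime $\T$ contains no linear subspace, so $(\T^\ast)^\circ$ is exactly the set of linear functionals strictly positive on $\T \setminus \{0\}$. Since $(M,g)$ is compact and vicious, Theorem~\ref{stab2} applies once we know it is class A; here $(M,g)$ is only assumed compact and vicious, so I would instead argue directly. Recall from Section~\ref{sec2} that for compact vicious spacetimes $\T$ is the closure of the cone over homology classes of future pointing causal loops, and that $\T^1$ (the compact base) does not contain $0$ precisely when $(M,g)$ is class A; but even without class A, the existence of an $\alpha$-equivariant \emph{time} function will force enough.

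First I would fix a future pointing causal loop $c$ in $M$ representing a class $k \in H_1(M,\Z)$, lift it to a future pointing causal curve $\bar c \colon [0,1] \to \overline M$ from $x$ to $x + k$. Since $f$ is a time function on $(\overline M, \overline g)$, it is strictly increasing along $\bar c$, so $f(x+k) > f(x)$; by $\alpha$-equivariance $f(x+k) = f(x) + \alpha(k)$, hence $\alpha(k) > 0$. Thus $\alpha$ is strictly positive on every integral class represented by a future pointing causal loop. The cone $\T$ is the closed convex cone generated by these classes, so $\alpha|_\T \ge 0$, i.e. $\alpha \in \T^\ast$. The work is to show the inequality is strict off the origin and, in fact, that $\alpha$ lies in the interior.

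For the strictness and interiority, I would use Fact~\ref{F1} together with the uniform ``coarse cone'' description of $J^+$ in Proposition~\ref{P01a}, or more simply the following quantitative version of the loop argument. Because $f$ is continuous and $\alpha$-equivariant on $\overline M$, and $\overline M / H_1(M,\Z)_\R = M$ is compact, the function $x \mapsto f(x) - \langle \text{linear part}\rangle$ descends to a bounded function; consequently there is a constant $A < \infty$ with $|f(y) - f(x) - \alpha(y-x)| \le A$ for all $x,y \in \overline M$. Now take any $h \in \T$, $h \ne 0$: by definition of $\T$ there are future pointing causal curves $\gamma_i$ with $L^{g_R}(\gamma_i) \to \infty$ and $\rho(\gamma_i) \to \lambda h$ for suitable $\lambda > 0$; lifting, $\gamma_i$ runs from $x_i$ to $y_i$ with $(y_i - x_i)/(b_i - a_i) \to \lambda h$, and $f$ strictly increasing along $\gamma_i$ gives $f(y_i) - f(x_i) > 0$, hence $\alpha(y_i - x_i) \ge -A$; dividing by $b_i - a_i \to \infty$ yields $\alpha(h) \ge 0$ again, so this alone is not strict. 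To get strictness I would instead invoke Lemma~\ref{L15}: an $\alpha$-equivariant time function which is (locally, hence globally by compactness) Lipschitz and satisfies the pseudo-time inequality forces, via Corollary~\ref{C15}, that $-d f^\sharp$ lies in $\Time(M',[g'])^{\e'}$ wherever $df$ exists; integrating $df$ against a future pointing causal loop representing a boundary class of $\T$ then shows $\alpha$ is bounded below by $\e'$ times the $g_R$-length, which is strictly positive — ruling out $\alpha$ vanishing anywhere on $\T \setminus \{0\}$, i.e. $\alpha \in (\T^\ast)^\circ$.

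The main obstacle, I expect, is the regularity step: a general time function $f$ need not be Lipschitz, so Corollary~\ref{C15} and the differential-form computation are not immediately available. I would handle this by a smoothing/mollification argument on $\overline M$ — replacing $f$ by an $\alpha$-equivariant smooth approximation that remains strictly increasing along future pointing causal curves in a slightly opened light cone (possible because the time-function condition is open in the $C^0$ sense on compact pieces) — or, alternatively, by appealing directly to the characterization in Proposition~\ref{P01a} and Theorem~\ref{stab2}(iii) which already packages ``$\alpha \in (\T^\ast)^\circ$'' as equivalent to the existence of a representing $1$-form with spacelike kernels; one then checks that an $\alpha$-equivariant time function produces exactly such data. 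Everything else (the loop argument, $\alpha$-equivariance bookkeeping, the boundedness constant $A$) is routine.
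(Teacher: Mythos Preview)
Your first paragraph --- lifting causal loops and using strict monotonicity of $f$ plus $\alpha$-equivariance to get $\alpha(k)>0$ on integral classes, hence $\alpha\in\T^\ast$ --- is fine and matches the paper's opening step.

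The gap is in the second step, and you have already put your finger on it without resolving it. Your route to $\alpha\in(\T^\ast)^\circ$ goes through Lemma~\ref{L15} and Corollary~\ref{C15}, which require $f$ to be a \emph{Lipschitz} $l$-\emph{pseudo} time function. A time function in the hypothesis of Lemma~\ref{L15a} is only continuous and strictly increasing along future pointing curves; there is no rate-of-increase bound and no Lipschitz bound, so neither \ref{L15} nor \ref{C15} applies. Your mollification fix does not work as stated: because the increase of $f$ along causal curves has no uniform lower bound, a $C^0$ smoothing can easily destroy monotonicity --- the ``time-function condition is open in the $C^0$ sense'' only after you already have some uniform transversality (e.g.\ a temporal function), which is exactly what you are trying to prove. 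The appeal to Theorem~\ref{stab2}(iii) is circular for the same reason: producing a closed $1$-form with spacelike kernel from a merely continuous $f$ again needs the missing regularity.

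The paper's argument avoids regularity entirely by a limit curve argument that uses only continuity of $f$. Assume $\alpha\in\partial\T^\ast$ and pick $h_\alpha\in(\partial\T\cap\ker\alpha)\setminus\{0\}$. Using Proposition~\ref{P01a}, take long future pointing curves $\delta_n$ in $\overline M$ whose ``displacement'' $\delta_n(T_n)-\delta_n(0)$ stays within $\err(g,g_R)$ of $\mathrm{span}\{h_\alpha\}$; by $\alpha$-equivariance the total increment $f(\delta_n(T_n))-f(\delta_n(0))$ is bounded independently of $n$. Cut $\delta_n$ into sub-arcs of $g_R$-length between $\inj(M,g)/2$ and $\inj(M,g)$; since there are arbitrarily many sub-arcs with bounded total $f$-increment, some sub-arcs have $f$-increment tending to $0$. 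Using compactness of $M$ and $\alpha$-equivariance, translate so that the initial points of these sub-arcs lie in a compact set, parameterize by arclength, and pass to a uniform limit. The limit is a nontrivial future pointing curve along which $f$ does not increase, contradicting the time function property. This is the missing idea: replace the differential estimate by a pigeonhole-plus-limit-curve argument that works for continuous $f$.
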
 
 
\begin{proof}
It is clear that $\alpha\in \T^\ast$, since else there would exist a homology class $h\in H_1(M,\Z)_\R\cap\T^\circ$ with $\alpha(h)\le 0$. Then, by Proposition 
\ref{3.2}, there exist $l\in \N$ and a timelike curve $\gamma\colon S^1\to M$ representing $l\cdot h$. Lifting $\gamma$ to $\overline{M}$ yields a timelike curve 
$\overline{\gamma}\colon [0,1]\to \overline{M}$ with 
$$f(\overline{\gamma}(1))-f(\overline{\gamma}(0))=l\alpha(h)\le 0.$$
This clearly contradicts the property of a time function.

Now assume that $\alpha\in \partial \T^\ast$. Choose $h_\alpha\in(\partial\T\cap \ker\alpha)\setminus \{0\}$ and future-pointing curves $\delta_n\colon [0,T_n]\to 
\overline{M}$ with $\dist(\delta_n(T_n)-\delta_n(0),span\{h_\alpha\})\le \err$, see Proposition \ref{P01a}. By construction 
we have $f(\delta_n(T_n))-f(\delta_n(0))\le K$ for some constant $K=K(f)<\infty$. Divide $\delta_n$ into sub-arcs $\delta_{n,k}\colon [0,a_{n,k}]\to \overline{M}$ 
with $L^{g_R}(\delta_{n,k})\in [\inj(M,g)/2,\inj(M,g)]$. From this sequence of ``short'' curves we obtain a subsequence $\{\delta'_n\}_{n\in\N}$ with 
$$f(\delta'_n(a_{n,k}))-f(\delta'_n(0))\to 0.$$ 
Using the compactness of $M$ and the $\alpha$-equivariance of $f$ we can assume that $\{\delta'_n(0)\}_{n\in\N}$ is contained in a compact subset of 
$\overline{M}$. Para\-meterizing $\delta'_n$ with respect to $g_R$-arclength, we deduce that a subsequence converges uniformly to a future-pointing curve 
$\delta \colon [0,a]\to \overline{M}$ with $f(\delta(a))-f(\delta(0))=0$. This again contradicts the time function property.
\end{proof}

\begin{lemma}\label{L19}
Let $\omega\in\alpha\in(\T^*)^\circ$ and $F$ a primitive of $\ol\pi^*\omega$. Assume that $\tau_\omega(\ol x)\in\R$. Then there exists $C<\infty$ such that for all
sequences $\{\ol y_n\}\subseteq J^+(\ol x)$ with $\ol\dist(\ol x,\ol y_n)\to \infty$ and 
$$\tau_\omega(\ol x)=\lim_{n\to\infty}[F(\ol y_n)-\mathfrak{l}^\ast(\alpha)d(\ol x,\ol y_n)]$$
and maximizers $\ol \gamma_n\colon [a_n,b_n]\to \ol M$ from $\ol x$ to $\ol y_n$ follows
$$L^{\ol g}(\ol \gamma_n|_{[c,d]})\ge \sup\{d(\ol u,\ol w)|\; \alpha(\ol w-\ol u)=\alpha(\ol\gamma_n(d)-\ol\gamma_n(c))\}-C$$
for $n$ sufficiently large and all $[c,d]\subseteq [a_n,b_n]$. 
\end{lemma}

\begin{cor}\label{C20}
Let $\omega\in\alpha\in(\T^*)^\circ$ and $F$ a primitive of $\ol\pi^*\omega$. Then there exists $\e>0$ such that 
$\ol g(\dot{\ol\gamma}_n,\dot{\ol\gamma}_n)\ge \e$ for all $n$ sufficiently large.
\end{cor}

\begin{proof}[Proof of Lemma \ref{L19}]
Since $\alpha\in (\T^\ast)^\circ$ the set $B^{\|.\|}_{\err}(\T)\cap\alpha^{-1}(r)$ is compact for all $r\in\R$. This implies, with Proposition \ref{P01a}, 
$$s_r:=\sup\{d(\ol y,\ol z)|\; \alpha(\ol z-\ol y)=r\}<\infty$$
and there exists $\ol u,\ol w\in \ol M$ with $d(\ol u,\ol w)=s_r$ and $\alpha(\ol w-\ol u)=r$. 

Set $C:= \frac{2\|\omega\|_\infty \fil +1}{\mathfrak{l}^\ast(\alpha)}$. Assume that there exists a sequence $[c_i,d_i]\subseteq [a_{n_i},b_{n_i}]$ for
a subsequence $\{\ol\gamma_{n_i}\}_{i\in\N}$ with 
$$L^{\ol g}(\ol \gamma_{n_i}|_{[c_i,d_i]})< s_{r_i}-C\text{ and }r_i:=\alpha(\ol\gamma_{n_i}(d_i)-\ol\gamma_{n_i}(c_i)).$$
Choose $\ol u_i$ and $\ol w_i$ as above. We can assume that $\ol u_i \in 
J^+(\ol\gamma_{n_i}(c_i))\cap B_{\fil}(\ol \gamma_{n_i}(c_i))$. Choose $k_i\in\mathcal{D}(\ol M,M)$ with $\gamma_{n_i}(d_i)+k_i\in 
J^+(\ol w_i)\cap B_{\fil}(\ol w_i)$. Further choose future-pointing curves $\ol \delta_{1,i}$ from $\ol\gamma_{n_i}$ to $\ol u_i$ and $\ol\delta_{2,i}$ from 
$\ol w_i$ to $\ol\gamma_{n_i}(d_i)+k_i$ with $g_R$-length less than $\fil$. Then we have $\ol y_{n_i}+k_i\in J^+(\ol x)$ with $d(\ol x,\ol y_{n_i}+k_i)\ge 
d(\ol x,\ol y_{n_i})+C$ and 
$$|F(y_{n_i}+k_i)-F(y_{n_i})|=\left|\int_{\ol\delta_{1,i}}\ol\pi^\ast \omega +\int_{\ol\delta_{2,i}}\ol\pi^\ast \omega\right|\le 2\|\omega\|_\infty \fil.$$
With the same reasoning as in the first paragraph follows $\ol\dist(\ol x,\ol y_{n_i}+k_i)\to\infty$. Now we can conclude
\begin{align*}
F(\ol y_{n_i}+k_i)-\mathfrak{l}^\ast(\alpha)&d(\ol x,\ol y_{n_i}+k_i)\le F(\ol y_{n_i})+2\|\omega\|_\infty \fil -\mathfrak{l}^\ast(\alpha)d(\ol x,\ol y_{n_i}+k_i)\\
&\le F(\ol y_{n_i})+2\|\omega\|_\infty \fil-\mathfrak{l}^\ast(\alpha)[d(\ol x,\ol y_{n_i})+C]\\
&\le F(\ol y_{n_i})-\mathfrak{l}^\ast(\alpha)d(\ol x,\ol y_{n_i})+2\|\omega\|_\infty \fil-\mathfrak{l}^\ast(\alpha)C\\
&= F(\ol y_{n_i})-\mathfrak{l}^\ast(\alpha)d(\ol x,\ol y_{n_i})-1
\end{align*}
for all $i\in\N$. This implies
$$\tau_\omega(\ol x)=\liminf_{\substack{\ol y\in J^+(\ol x),\\ \ol\dist(\ol x,\ol y)\to\infty}}[F(\ol y)-\mathfrak{l}^\ast(\alpha)\, d(\ol x,\ol y)]\le \lim_{n\to\infty}[F(\ol y_{n})-
\mathfrak{l}^\ast(\alpha)d(\ol x,\ol y_{n})]<\tau_\omega(\ol x)$$
which is a contradiction.
\end{proof}

\begin{proof}[Proof of Corollary \ref{C20}]
Choose $r\in\R$ minimal such that $s_r\ge C+1$. Like in the first paragraph of the proof of Lemma \ref{L19} we see that for any compact set $K\subseteq 
\ol M$ the set 
$$\{\ol y\in\ol M|\;\exists\, \ol x\in K: \alpha(\ol y-\ol x)=r,\; \ol y\in J^+(\ol x)\}$$
is compact. Corollary \ref{1.10} then implies that $L^{\ol g_R}(\ol\eta) \le \ol C$ for some $\ol C<\infty$ for any future-pointing curve 
$\ol\eta\colon [a,b]\to\ol M$ with $\alpha(\ol\eta(d)-\ol\eta(c))=r$. In sum we know from Lemma \ref{L19} that 
\begin{equation}\label{E100}
s_r\ge L^{\ol g}(\ol\gamma_n|_{[c,d]})\ge 1\ge \frac{L^{\ol g_R}(\ol\gamma_n|_{[c,d]})}{\ol C}
\end{equation}
for $n$ sufficiently large and all $[c,d]\subseteq [a_n,b_n]$ with $\alpha(\ol\gamma_n(d)-\ol\gamma_n(c))=r$.

Now assume that there exists a sequence $t_i\in [a_{n_i},b_{n_i}]$ with $g(\dot{\ol\gamma}_{n_i}(t_i),\dot{\ol\gamma}_{n_i}(t_i))\to 0$. Choose intervals
$[c_i,d_i]\subseteq [a_{n_i},b_{n_i}]$ with $t_i\in [c_i,d_i]$ and $\alpha(\ol\gamma_{n_i}(d_i)-\ol\gamma_{n_i}(c_i))=r$. From \eqref{E100} we know that 
$L^{\ol g_R}(\ol\gamma_n|_{[c_i,d_i]})\le \ol C s_r$ and $\sup_{s\in[c_i,d_i]} g(\dot{\ol\gamma}_{n_i}(s),\dot{\ol\gamma}_{n_i}(s))\to 0$ for $i\to\infty$ by the 
continuity of the pregeodesic flow. But then $L^{\ol g}(\ol\gamma_{n_i}|_{[c_i,d_i]})\to 0$ contradicting \eqref{E100}.
\end{proof}

\begin{proof}[Proof of Theorem \ref{P20-}]
Fix $\ol{x}\in \ol{M}$. By definition we have $|\alpha(k)|\ge \mathfrak{l}^\ast(\alpha)\mathfrak{l}(k)$ for any $k\in H_1(M,\Z)_\R$. Further note that $\mathfrak{l}(k)
\ge d(\ol x,\ol x+k)$ for all $\ol x\in\overline{M}$, since $k$ is an integer class. For $\ol y\in\overline{M}$ choose $k_{\ol{y}}\in \mathcal{D}(\ol M,M)$ with 
$\ol x+k_{\ol y}\in J^+(\ol y)\cap B_{\fil}(\ol y)$. Then we have $F(\ol y)\ge F(\ol x+k_{\ol y})-\|\omega\|_\infty \fil$. By the definition of $\tau_\omega$
we consider only $\ol y\in J^+(\ol x)$. With this we conclude $\ol x+k_{\ol y}\in J^+(\ol x)$ and therefore $k_{\ol y}\in \T$. We obtain
\begin{align*}
F(\ol y)-\mathfrak{l}^\ast(\alpha)\,d(\ol x,\ol y)&\ge F(\ol x+k_{\ol y})-\|\omega\|_\infty \fil-\mathfrak{l}^\ast(\alpha)\, d(\ol x,\ol x+k_{\ol y})\\
&\ge F(\ol x)+\alpha(k_{\ol y})-\mathfrak{l}^\ast(\alpha)\mathfrak{l}(k_{\ol y})-\|\omega\|_\infty \fil\\
&\ge F(\ol x)-\|\omega\|_\infty \fil
\end{align*}
and $\tau_\omega(\ol x)>-\infty$.

In order to show $\tau_\omega(\ol x)<\infty$, consider a homology class $h\in \{h'\in \mathfrak{l}^{-1}(1)|\;\alpha(h')=\mathfrak{l}^\ast(\alpha)\}$ and 
a sequence $\{\ol \gamma_n\colon [a_n,b_n]\to \overline{M}\}_{n\in\N}$ of maximizers with 
$$\frac{1}{b_n-a_n}(\ol \gamma_n(b_n)-\ol \gamma_n(a_n),L^g(\ol \gamma_n))\to (h,\mathfrak{l}(h)).$$ 
The existence of $\ol \gamma_n$ follows from Remark \ref{R10a}. Choose a sequence $\e_n \downarrow 0$ such that 
$$\mathfrak{l}^\ast(\alpha)\left[\frac{1}{b_n-a_n}L^g(\ol \gamma_n)+\e_n\right]\ge \alpha(\rho(\ol \gamma_n))$$
for all $n\in\N$. Then there exists a sequence of subarcs $\{\ol \gamma_n|_{[c_n,d_n]}\}_{n\in\N}$ with 
$\e_n(d_n-c_n)\mathfrak{l}^\ast(\alpha)\le 1$, $d_n-c_n\to\infty$ and 
$$\mathfrak{l}^\ast(\alpha)\,\left[\frac{1}{d_n-c_n}L^g(\ol \gamma_n|_{[c_n,d_n]})+\e_n\right]\ge 
  \alpha(\rho(\ol \gamma_n|_{[c_n,d_n]})).$$
We can assume that $\ol \gamma_n(c_n)\in J^+(\ol x)\cap B_{\fil}(\ol x)$. Choose $k_n\in\mathcal{D}(\ol M,M)$ with 
$\ol x+k_n \in J^+(\ol \gamma_n(d_n))\cap B_{\fil}(\ol \gamma_n(d_n))$. Then we have 
$\|[\ol \gamma_n(d_n)-\ol \gamma_n(c_n)]-k_n\|\le 2\fil+\std$. Now we can estimate:
\begin{align*}
\tau_\omega(\ol x)\le& \liminf_{n\to\infty}[F(\ol x+k_n)-\mathfrak{l}^\ast(\alpha)\,d(\ol x,\ol x+k_n)]\\
\le& \liminf_{n\to\infty}[\alpha(k_n)-\mathfrak{l}^\ast(\alpha)\,L^g(\ol \gamma_n|_{[c_n,d_n]})]+F(\ol x)\\
\le& \liminf_{n\to\infty}[\alpha(\ol \gamma_n(d_n)-\ol \gamma_n(c_n))-\mathfrak{l}^\ast(\alpha)\,L^g(\ol \gamma_n|_{[c_n,d_n]})]\\
&+F(\ol x)+\|\alpha\|^\ast(2\fil+\std)\\
\le& F(\ol x)+\|\alpha\|^\ast(2\fil+\std)+1<\infty.
\end{align*}
Therefore $\tau_\omega$ is finite everywhere on $\overline{M}$.

The $\alpha$-equivariance of $\tau_\omega$ follows easily from the $\alpha$-equivariance of $F$. For $k\in \mathcal{D}(\ol M,M)$ we have
\begin{align*}
\tau_\omega(\ol x+k)&=\liminf [F(\ol y)-\mathfrak{l}^\ast(\alpha)d(\ol x+k,\ol y)]\\
&=\liminf [F(\ol y+k)-\mathfrak{l}^\ast(\alpha)d(\ol x+k,\ol y+k)]\\
&=\liminf [F(\ol y)-\mathfrak{l}^\ast(\alpha)d(\ol x,\ol y)]+\alpha(k)=\tau_\omega(\ol x)+\alpha(k).
\end{align*}

To see why $\tau_\omega$ is a $\mathfrak{l}^\ast(\alpha)$-pseudo time function let $\ol z\in J^+(\ol x)$ be given. Then the reversed triangle inequality implies
\begin{align*}
\tau_\omega(\ol z)&=\liminf_{\ol \dist(\ol z,\ol y)\to \infty, \ol y\in J^+(\ol z)} [F(\ol y)-\mathfrak{l}^\ast(\alpha)d(\ol z,\ol y)]\\
&\ge \liminf_{\ol \dist(\ol x,\ol y)\to \infty, \ol y\in J^+(\ol x)} [F(\ol y)-\mathfrak{l}^\ast(\alpha)d(\ol x,\ol y)]+\mathfrak{l}^\ast(\alpha)d(\ol x,\ol z)\\
&\ge \tau_\omega(\ol x)+\mathfrak{l}^\ast(\alpha)d(\ol x,\ol z).
\end{align*}

Last we have to show that $\tau_\omega$ is Lipschitz. 
For $\ol x\in \overline{M}$ consider a sequence of maximizers $\ol \gamma_n\colon [a_n,b_n]\to \overline{M}$ with 
$\ol \gamma_n(a_n)=\ol x$ and $\tau_\omega(\ol x)=\lim_{n\to\infty}F(\ol \gamma_n(b_n))-\mathfrak{l}^\ast(\alpha)L^g(\ol \gamma_n)$. 
We know from Corollary \ref{C20} that there exists $\e>0$, independent of $\ol x$, such that $\ol g(\dot{\ol\gamma}_n,\dot{\ol\gamma}_n)\ge \e$.

Choose a convex normal neighborhood $U_{\ol x}$ around $\ol x$ and $a_n' >a_n$ with $\ol\gamma_n(a_n')\in U_{\ol x}$. Further choose, with 
Corollary \ref{C20}, $\d>0$, independent of $\ol x$, such that $\ol{B_\d(\ol x)}\subseteq I^-(\ol\gamma_n(a'_n))\cap U_{\ol x}$ and 
$$f_n(\ol z):=F(\ol\gamma_n(b_n))-\mathfrak{l}^\ast(\alpha)[L^g(\ol \gamma_n|_{[a_n',b_n]})+\ol g(\exp^{-1}_{\ol\gamma_n(a_n')}(\ol z),
\exp^{-1}_{\ol\gamma_n(a_n')}(\ol z))]$$ 
is Lipschitz on $\ol{B_\d(\ol x)}$. Since the $\ol\gamma_n$'s are maximizers and therefore pregeodesics we know that 
$\lim_n f_n(\ol x)=\tau_\omega(\ol x)$. 

It follows, combining the above, that $\tau_\omega$ is the infimum of a family of Lipschitz functions with uniformly bounded Lipschitz constant and therefore
Lipschitz itself.
\end{proof}

\begin{proof}[Proof of Theorem \ref{P20}]
(i) Let $\omega\in\alpha\in\T^\ast$ and $\mu\in \mathfrak{M}^g$. Then we have 
$$\alpha(\rho(\mu))=\int\omega d\mu\ge l_\infty(\omega)\LF(\mu)$$
and therefore $\alpha(h)\ge l'(\alpha)\mathfrak{l}(h)$ for all $h\in\T$. This shows $\mathfrak{l}^\ast(\alpha)
\ge l'(\alpha)$ for all $\alpha\in\T^\ast$.

(ii) In order to show the inequality $\mathfrak{l}^\ast(\alpha)\le l'(\alpha)$, we approximate the calibration $\tau_\omega$, from Proposition \ref{P20-}, by 
$\alpha$-equivariant smooth functions on $\overline{M}$. 

Let $\omega\in\alpha\in (\T^\ast)^\circ$ and let $F\in C^\infty(\overline{M})$ be a primitive of $\overline{\pi}^\ast \omega$. For $\ol x\in \overline{M}$ choose 
$\ol y_n\in \overline{M}$ and maximizers $\ol \gamma_n$ connecting $\ol x$ with $\ol y_n$ such that
$$\tau_\omega(\ol x)=\lim_{n\to\infty} [F(\ol y_n)-\mathfrak{l}^\ast(\alpha)\, L^{\ol g}(\ol \gamma_n)].$$ 
Let $\ol \gamma$ be any limit pregeodesic of $\{\ol \gamma_n\}_{n\in\N}$. Then $\ol \gamma$ maximizes arclength by the upper semi-continuity of the length functional
and we have
\begin{align*}
\tau_\omega(\ol\gamma(t))&=\liminf_{\ol y\in J^+(\ol\gamma(t)), \ol\dist(\ol\gamma(t),\ol y)\to \infty}[F(\ol y)-\mathfrak{l}^\ast(\alpha)\, d(\ol\gamma(t),\ol y)]\\
&\le \liminf_{n\to \infty}[F(\ol y_n)-\mathfrak{l}^\ast(\alpha)\, d(\ol \gamma(t),\ol y_n)]\\
&= \liminf_{n\to \infty}[F(\ol y_n)-\mathfrak{l}^\ast(\alpha)\, d(\ol\gamma_n(t),\ol y_n)]\\
&= \liminf_{n\to \infty}[F(\ol y_n)-\mathfrak{l}^\ast(\alpha)\, (d(\ol x,\ol y_n)+L^g(\ol\gamma_n|_{[0,t]}))]\\
&=\tau_\omega(\ol x)+\mathfrak{l}^\ast(\alpha)\, d(\ol x,\ol \gamma(t))
\end{align*}
for all $t>0$. The third step follows from Corollary \ref{C20} which implies the local uniform Lipschitz continuity of $\ol z\mapsto d(\ol z,\ol y_n)$ on a 
neighborhood of $\ol\gamma(t)$.

This implies $\tau_\omega(\ol\gamma(t))= \tau_\omega(\ol x)+\mathfrak{l}^\ast(\alpha)\;d(\ol x,\ol \gamma(t))$ since $\tau_\omega$ is a calibration representing 
$\alpha$. 

For $p\in M$ denote with $\inj(M,g)_p$ the supremum over all $\eta>0$ such that $B_\eta(p)$ is contained in a convex normal neighborhood of $p$ in $(M,g)$
with $g_R$-diameter at most $1$. Define $\inj(M,g):=\inf \{\inj(M,g)_p|\;p\in M\}$. Since $(\overline{M},\overline{g})$ covers the compact spacetime $(M,g)$, we 
have $\inj(\overline{M},\overline{g})>0$. 

For a convolution kernel $\rho\in C^\infty(\R,\R)$ define
$$\tau_{\omega,\delta}\colon \overline{M}\to \R,\;\ol p\mapsto \delta^{-m}\int_{T\overline{M}_{\ol p}} 
\tau_\omega(\exp^{\ol g}_{\ol p}(v))\varrho(\delta^{-1}|v|)\vol^{\ol g}(v)$$
for $\delta <\inj(\overline{M},\overline{g})$. Choose, using Corollary \ref{C15},  $\e_0>0$ such that $-d\tau_\omega^\sharp \in 
\Time(\overline{M},[\overline{g}])^{\e_0}$, whenever $d\tau_\omega$ exists. By standard theory we have 
$$d\tau_{\omega,\delta}(.)=\delta^{-m}\int d\tau_\omega\circ(\exp^{\ol g}_{\ol p})_{\ast,v}(.) \varrho(\delta^{-1}|v|)\vol^{\ol g}(v).$$
Since every fibre of $\Time(\overline{M},[\overline{g}])^{\e_0}$ is convex and $(\exp_{\ol p})_{\ast,0_{\ol p}}=\id_{T\overline{M}_{\ol p}}$, there exist $\e_1>0$ and 
$\delta_1>0$ such that 
$$-d\tau_{\omega,\delta}^\sharp\in \Time(\overline{M},[\overline{g}])^{\e_1}$$
for all $\delta<\delta_1$. By the calibration property we have $d\tau_\omega(v)\ge \mathfrak{l}^\ast (\alpha)\sqrt{\ol g(v,v)}$
for all future-pointing $v\in T^1\ol M_{\ol x}$ such that $d(\tau_\omega)_{\ol x}$ exists. 
Like before we can choose for every $\e_2>0$ a  real number $\delta_2=\delta_2(\e_1,\e_2)>0$ such that 
\begin{equation}\label{E15}
d(\tau_{\omega,\delta})(v)\ge (1-\e_2)\mathfrak{l}^\ast(\alpha) \sqrt{\ol g(v,v)}
\end{equation}
for all $\delta<\delta_2$ and $v\in \Time(\overline{M},[\overline{g}])^{\e_1}$.
The function $d(\tau_{\omega,\delta})_{\pi(v)}(v)$ attains its minimum exactly at the positive multiples of 
$-d(\tau_{\omega,\delta})^\sharp\in \Time(\overline{M},[\overline{g}])^{\e_1}$. By the Cauchy-Schwarz inequality for 
Lorentzian inner products this minimum is a global minimum for all future-pointing vectors. Therefore (\ref{E15}) holds 
for all $v\in \Time(\overline{M},[\overline{g}])$ and we have 
$$l_\infty(d\tau_{\omega,\delta})\ge (1-\e_2)\mathfrak{l}^\ast(\alpha)$$ 
if $0<\delta< \delta_2$. Recall that $d\tau_{\omega,\delta}$ is, for $\delta$ sufficiently small, an $\mathcal{D}(\ol M,M)$-invariant smooth $1$-form. It induces a 
smooth closed $1$-form on $M$ representing $\alpha$. Therefore $\mathfrak{l}^\ast(\alpha)$ is indeed the supremum of the set $\{l_\infty(o)\}_{o\in \alpha}$.
\end{proof}

\subsection{Proofs to Section \ref{S3.4}}

\begin{proof}[Proof of Proposition \ref{P22}]
The main idea is taken from the proof of \cite[Proposition 2]{ma}. Several points need special attention, though. These include the issue of connectivity by future 
pointing curves. To keep the exposition clear and complete, we present the proof in detail.

Let $\Sigma_\gamma\subseteq H_1(M,\R) \times \R$ denote the convex hull of the set of pairs $(\rho(\mu),\LF(\mu))$, where $\mu$ is a limit measure of 
$\gamma$. The claim is easily seen to be equivalent to the statement that $\Sigma_\gamma \subseteq \graph\, \mathfrak{l}$. 

The idea is to prove $\Sigma_\gamma \subseteq \graph\, \mathfrak{l}$ by contradiction. Otherwise, there would exist
$(h,z)\in \Sigma_\gamma$ with $z<\mathfrak{l}(h)$. Since $\gamma$ is a $\T^\circ$-maximizer, we can assume that 
$h\in \T^\circ$. 
This can be done by adding a convex combination of limit measures of $\gamma$ contained in $\T^\circ$ to the 
given convex combination. Since $\mathfrak{l}$ is concave, this does not alter our assumptions.
Consequently, there exist limit measures $\mu_1,\ldots ,\mu_l$ of $\gamma$ 
and $\lambda^1,\ldots ,\lambda^l\geq 0$ with $\sum \lambda^i =1$ such that 
$$\sum \lambda^i \rho(\mu_i)=h\in \T^\circ\text{ and }\sum \lambda^i \LF(\mu_i)=z.$$
We can further assume that the limit measures $\mu_i$ are probability measures. This produces no restriction on the generality of the argument, since 
$\mathfrak{l}$ is positively homogeneous of degree one.

Choose $\delta>0$ with $h\in \T_{2\delta}$ and let $L(\delta)<\infty$ be the Lipschitz constant of $\mathfrak{l}|_{\T_\delta}$ (recall $\mathfrak{l}$ is concave). 
With Theorem \ref{T17} we have 
\begin{equation}\label{-2}
\left|\frac{1}{b^\ast-a^\ast}L^g(\gamma^\ast)-\mathfrak{l}(h)\right|\le
\frac{\overline{C}(\delta)}{b^\ast-a^\ast}+L(\delta)\|h-\rho(\gamma^\ast)\|
\end{equation}
for any maximizer $\gamma^\ast\colon [a^\ast,b^\ast]\to \overline M$ with $\rho(\gamma^\ast)\in \T_\delta$. Choose $\e=(\mathfrak{l}(h)-z)/10$ and consider 
$T<\infty$ with 
\begin{equation}\label{-1}
2\overline{C}(\delta)+2L(\delta)(2\diam(M,g_R)+2\std)\le \e T.
\end{equation} 
Choose $C>0$ with $1/C \leq \|h'\|\leq C$, for all $h'\in\T^1$. Increase $T$, if necessary, to be larger than $C K(2\diam(M,g_R)+2\std+1)/\delta$ (For the definition 
of $K(.)$ compare Proposition \ref{3.2}).

For each $i \in \{1,\ldots ,l\}$, choose an infinite sequence of mutually disjoint intervals $I_{ij}=
[a_{ij},b_{ij}]$, $j\in \mathbb N$ such that $b_{ij}-a_{ij}$ is an integral multiple of $T$,  
$b_{ij}-a_{ij}\to \infty$ and $\mu_{ij}\stackrel{\ast}{\rightharpoonup} \mu_i$, as $j\to \infty$, where $\mu_{ij}$ 
denotes the probability measure evenly distributed along $\dot{\gamma}|_{I_{ij}}$.
Next consider the partition $\{I_{ij\iota}\}_\iota$ of $I_{ij}$ into intervals of length $T$. Obviously, the 
mean value of $\{\rho(\gamma|_{I_{ij\iota}})\}_\iota$ is $\rho(\gamma|_{I_{ij}})$. Recall that we have 
$\rho(\mu_{ij})\stackrel{\ast}{\rightharpoonup} \rho(\mu_i)$, as $j\to \infty$, and $h$ is a convex 
combination of the 
$\rho(\mu_i)$. It is thus possible to choose a finite subcollection $\{I_\kappa\}_{\kappa\in\{1,\ldots ,N\}}$ of the 
fa\-mi\-ly $\{I_{ij\iota}\}_{i,j,\iota}$ subject to two conditions. First, the mean value $h'$ of the 
$\rho(\gamma|_{J_\kappa})$ satisfies $L(\delta)\|h'-h\|<\e/2$ and second the mean value of $L^g(\gamma|_{J_\kappa})/T$ 
is smaller than $z+\e$. It represents no restriction on the generality to assume $h'\in \T_\delta$, since this can 
always be achieved by increasing $j$ and $T$. For later use note further that by raising $T$ the stable norm of the 
$\rho(\gamma|_{J_\kappa})$ can be assumed to lie between $\frac{1}{2C}$ and $2C$. 
Let $c_\kappa <d_\kappa$ denote the endpoints of $J_\kappa$ and suppose that 
the intervals $J_\kappa$ are indexed in increasing order, i.e. $d_\kappa \le c_{\kappa +1}$. 
Let $\overline{\gamma}\colon \R\to \overline{M}$ be any lift of $\gamma$ to the Abelian cover.
Choose deck transformations $k_\kappa$ ($0\le \kappa\le N-1,\;k_0:=\id$) inductively such that
\begin{equation}\label{0}
\begin{split}
\left\|\sum_{\tau =1}^\kappa [\overline{\gamma}(d_\tau)+k_\tau-(\overline{\gamma}(c_\tau)+k_{\tau-1})]-\kappa Th'\right\|
\le \diam(M,g_R)+\std
\end{split}
\end{equation}
for all $0\le \kappa\le N-1$. By the choice of $T$ we know that 
$$\dist\nolimits_{\|.\|}(Th',\partial\T)\ge T\delta \|h'\|\ge \frac{\delta}{C}T\ge K(2\diam(M,g_R)+2\std+1).$$ 
This implies, using Proposition \ref{3.2}, that for any pair of points $(x,y)\in \overline{M}\times \overline{M}$ 
with $y-x=Th'$, the closed ball of radius $2\diam(M,g_R)+2\std$ around $y$ is contained in $I^+(x)$. 
Since we have, using \eqref{0},
\begin{align}\label{1}    
\|[\overline{\gamma}(d_\kappa)+k_\kappa-(\overline{\gamma}(c_\kappa)+k_{\kappa -1})]-Th'\|\le 2\diam(M,g_R)+2\std,
\end{align}
we obtain $\overline{\gamma}(d_\kappa)+k_\kappa\in I^+
(\overline{\gamma}(c_\kappa)+k_{\kappa -1})$ for all $\kappa\le N-1$.
From $\overline{\gamma}(d_N)-\overline{\gamma}(c_1)=\rho^\ast +T\sum_{\tau =1}^N
\rho(\gamma|_{J_\tau})=\rho^\ast+TNh'$ for 
$$\rho^\ast :=\sum_{\tau =1}^{N-1}\overline{\gamma}(c_{\tau +1})-\overline{\gamma}(d_\tau)
  =\sum_{\tau =1}^{N-1}\overline{\gamma}(c_{\tau +1})+k_\tau-(\overline{\gamma}(d_\tau)+k_\tau)$$
and $\overline{\gamma}(c_N)+k_{N-1}-\overline{\gamma}(c_1)=\rho^\ast+\sum_{\tau =1}^{N-1}
\overline{\gamma}(d_\tau)+k_\tau-(\overline{\gamma}(c_{\tau})+k_{\tau-1})$, we obtain
\begin{align}\label{2}    
\|[\overline{\gamma}(d_N)-(\overline{\gamma}(c_N)+k_{N-1})]-Th'\|\le 2\diam(M,g_R)+\std.
\end{align}
Thus with Proposition \ref{3.2} follows
    $$\overline{\gamma}(d_N)\in I^+(\overline{\gamma}(c_N)+k_{N-1})$$
and we define $k_N:=\id$.

With the deck transformations $k_\kappa$ $(0\le \kappa \le N)$ chosen, we construct a new curve 
$\widetilde{\gamma}\colon \R\to \overline M$ as follows. Define 
$$\widetilde{\gamma}|_{(-\infty,c_1]\cup [d_N,\infty )}:=\overline{\gamma}|_{(-\infty,c_1]\cup [d_N,\infty )}, 
\widetilde{\gamma}|_{[d_\kappa,c_{\kappa +1}]}:=\overline{\gamma}|_{[d_\kappa,c_{\kappa +1}]}+k_\kappa$$ 
and $\widetilde{\gamma}|_{[c_\kappa,d_\kappa]}$ a maximal geodesic joining $\overline{\gamma}
(c_\kappa)+k_{\kappa -1}$ with $\overline{\gamma}(d_\kappa)+k_\kappa$. Note that 
$\widetilde{\gamma}|_{[c_\kappa,d_\kappa]}$ is in general not parameterized by $g_R$-arclength. With the inequalities 
(\ref{-2}), (\ref{-1}), (\ref{1}), (\ref{2}) and $L(\delta)\|h-h'\|<\e/2$ we conclude
$$|(d_\kappa -c_\kappa)^{-1}L^g(\widetilde{\gamma}|_{[c_\kappa,d_\kappa]})-\mathfrak{l}(h)|<\e.$$ 
Consequently $L^g(\widetilde{\gamma}|_{[c_1,d_N]})\geq \sum_{\kappa=1}^{N-1}
L^g(\gamma|_{[d_\kappa,c_{\kappa +1}]})+TN(\mathfrak{l}(h)-\e)$. But the assumptions imply 
$L^g(\gamma|_{[c_1,d_N]})\leq \sum_{\kappa=1}^{N-1}L^g(\gamma|_{[d_\kappa,c_{\kappa +1}]})
+TN(z+\e)$, since the mean value of the $L^g(\gamma|_{J_\kappa})/T$ is smaller than $z+\e$. Hence 
$L^g(\gamma|_{[c_1,d_N]})< L^g(\widetilde{\gamma}|_{[c_1,d_N]})$ and we arrive at a contradiction to the maximization 
property of $\gamma$.
\end{proof}

\begin{proof}[Proof of Proposition \ref{P20a}]
Since $\tau$ is $\alpha$-equivariant, the set $\defa(\partial\tau)$ and the function 
$$\partial\tau\colon T^1\overline{M}\to \R,\;\overline{v}\mapsto \partial_{\overline{v}}\tau$$
are invariant under the induced action of $\mathcal{D}(\ol{M},M)$. Therefore we can define a bounded measurable function 
$$\omega_\tau\colon T^1M\to \R,\; v\mapsto \partial_{\overline{v}}\tau,$$
where $\overline{v}\in T^1\overline{M}$ is any vector with $\overline{\pi}_\ast(\overline{v})=v$.
Choose an $\alpha$-equivariant smooth function $\sigma\colon\overline{M}\to\R$ and an $\alpha$-invariant 
Lipschitz function $\phi\colon \overline{M}\to \R$ such that $\tau=\sigma+\phi$. The differential
of $\sigma$ induces a smooth closed $1$-form $\omega_{\sigma}$ on $M$. Further $\phi$ induces a Lipschitz 
function $\phi'$ on $M$. 

Let $\mu\in \mathfrak{M}_{\alpha}$. By definition we have $\alpha(\rho(\mu))=\mathfrak{l}^\ast(\alpha)\LF(\mu)$. 
Then, using Lemma \ref{L0c},
$$\mathfrak{l}^\ast(\alpha)\int_{T^1M} |v|_g d\mu(v)=\alpha(\rho(\mu))
=\int_{T^1M} \left(\omega_{\sigma}+\partial\phi'\right)\,d\mu=\int_{T^1M} \omega_\tau(v)\,d\mu(v).$$
Using Fubini's Theorem and the $\Phi$-invariance of $\mu$ we obtain
\begin{align*}
0&=\int_s^t \int_{T^1M}[\omega_\tau(\Phi(v,t'))-\mathfrak{l}^\ast(\alpha)|\Phi(v,t')|_g] d\mu(v)dt'\\
&=\int_{T^1M}\int_s^t [\omega_\tau(\Phi(v,t'))-\mathfrak{l}^\ast(\alpha)|\Phi(v,t')|_g] dt'd\mu(v)\\
&=\int_{T^1M}[\tau(\overline{\gamma_v}(t))-\tau(\overline{\gamma_v}(s))
-\mathfrak{l}^\ast(\alpha)L^g(\gamma_v|_{[s,t]})]d\mu(v),
\end{align*}
for all $s<t\in \R$ where $\overline{\gamma_v}$ is any lift of $\gamma_v$ to $\overline{M}$. Note that the last equality follows from the fact that for any 
$C^1$-curve $\gamma\colon I\to M$,  the map $\tau\circ\gamma$ is differentiable almost everywhere and we can apply the fundamental Theorem of Calculus.

Since $\tau$ is a calibration we have 
$$\tau(\overline{\gamma_v}(t))-\tau(\overline{\gamma_v}(s))=\mathfrak{l}^\ast(\alpha)L^g(\gamma_v|_{[s,t]})$$
for $\mu$-almost all $v\in T^1M$ and all $s<t\in \R$. Note that a set containing $\mu$-almost every point is 
dense in $\supp\mu$. The general claim now follows from the continuity of $\Phi$.
\end{proof}

\begin{proof}[Proof of Theorem \ref{P20+}]
Let $\mu$ be a probability limit measure of $\gamma$ and $[s_n,t_n]\subset \R$ such that 
$$\frac{1}{t_n-s_n}\gamma_\sharp (\mathcal{L}^1|_{[s_n,t_n]})\stackrel{\ast}{\rightharpoonup}\mu,$$ 
for $n\to\infty$. Note that it poses no restriction to consider probability measures.

Choose $\sigma\colon\overline{M}\to\R$ and $\phi\colon \overline{M}\to\R$ as before. 
Let $\overline{\gamma}$ be a lift of $\gamma$ to $\overline{M}$. We have
\begin{align*}
\frac{\tau(\overline{\gamma}(t_n))-\tau(\overline{\gamma}(s_n))}{t_n-s_n}
&=\frac{1}{t_n-s_n}\int_{s_n}^{t_n} \omega_{\sigma}(\dot{\gamma}(t))\,dt
+\frac{\phi(\gamma(t_n))-\phi(\gamma(s_n))}{t_n-s_n}\\
&\rightarrow\int_{T^1M} \omega_{\sigma}\, d\mu=\alpha(\rho(\mu))
\end{align*}
for $n\to \infty$. By assumption we have
\begin{align*}
\frac{1}{t_n-s_n}[\tau(\overline{\gamma}(t_n))-\tau(\overline{\gamma}(s_n))]
=\frac{\mathfrak{l}^\ast(\alpha)}{t_n-s_n}L^g(\gamma|_{[s_n,t_n]})
\rightarrow \mathfrak{l}^\ast(\alpha)\LF(\mu)\le \mathfrak{l}^\ast(\alpha) \mathfrak{l}(\rho(\mu)).
\end{align*}
Since $\alpha(\rho(\mu))\ge \mathfrak{l}^\ast(\alpha)\mathfrak{l}(\rho(\mu))$, this implies 
equality, i.e. $\alpha(\rho(\mu))=\mathfrak{l}^\ast(\alpha)\LF(\mu)=\mathfrak{l}^\ast(\alpha)\mathfrak{l}(\rho(\mu))$ 
and consequently $\mu\in \mathfrak{M}_{\alpha}$.

By Lemma \ref{L15} there exists $\e>0$ such that 
$$\mathfrak{l}^\ast(\alpha)d(\overline{\gamma}(s),\overline{\gamma}(t))=\tau(\overline{\gamma}(t))
-\tau(\overline{\gamma}(s))\ge \e \dist(\overline{\gamma}(s),\overline{\gamma}(t))$$
for all $s\le t$ and any lift $\overline{\gamma}$ of $\gamma$ to $\overline{M}$. Using the continuity of the pregeodesic flow and the
fact that $\Light(M,[g])$ is $\Phi$-invariant, we see that the tangents of $\gamma$ cannot approach $\Light(M,[g])$.
\end{proof}

\begin{proof}[Proof of Corollary \ref{P21}]
Choose $\alpha\in (\T^\ast)^\circ$ and set $\mathcal{K}:=\{(h,t)|\;h\in \alpha^{-1}(1)\cap\T,\;0\le t\le\mathfrak{l}(h)\}$. 
Choose any $h\in \alpha^{-1}(1)\cap\T$ such that $\alpha(h)=\mathfrak{l}^\ast(\alpha)\mathfrak{l}(h)$ (i.e. 
$\alpha$ supports $\mathfrak{l}$ at $h$) and extremal points $(h_i,t_i)$ $(1\le i\le b'\le b)$ of $\mathcal{K}$ with 
$(h,\mathfrak{l}(h))\in\relint\conv\{(h_i,t_i)\}$. Note that $\mathfrak{l}(h)>0$ since $\alpha\in(\T^\ast)^\circ$. 
Then there exists $1\le j\le b'$ with $t_j=\mathfrak{l}(h_j)>0$ and $\alpha(h_j)=\mathfrak{l}^\ast(\alpha)
\mathfrak{l}(h_j)$, since $(h,\mathfrak{l}(h))\in\relint\conv\{(h_i,t_i)\}$.

Like in the proof of Theorem \ref{P10} there exists a maximal ergodic measure $\mu$ with $\rho(\mu)\in\pos\{h_j\}$. Then we have $\mu\in 
\mathfrak{M}_\alpha$. By Proposition \ref{P20a} any $\gamma$ with $\gamma'\subset\supp\mu$ is calibrated by any calibration representing $\alpha$. The claim 
now follows immediately with Proposition \ref{P20+}. 
\end{proof}

\subsection{Proofs to Section \ref{secgr}}

Theorem \ref{T20} is an immediate corollary from the following pointwise version:

\begin{prop}\label{T21}
Let $(M,g)$ be of class A. Then for every $\alpha\in\T^\ast$ and e\-ve\-ry $\kappa >0$ there exists 
$K'=K'(\alpha,\kappa)<\infty$ such that for every 
$v\in supp\; \mathfrak M_\alpha\cap\Time(M,[g])^\kappa$ the inverse of $\pi_{TM}|_{\supp\mathfrak{M}_\alpha
\cap\Time(M,[g])^\kappa}$ is Lipschitz at $\pi(v)$ with Lipschitz constant $K'$, i.e.
    $$\dist(v,\pi^{-1}(y))\leq K' \dist (\pi(v),y)$$
for any $y\in \pi_{TM}(supp\, \mathfrak M_\alpha)$.
\end{prop}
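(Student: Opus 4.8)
The plan is to deduce the statement exactly as Theorem~\ref{T20} is deduced from Lemma~\ref{L20}, but from a timelike refinement of Lemma~\ref{L20} in which the gained length and the hypothesis on the tangents are of the \emph{same} order. First a local reduction. Since $M$ is compact, $\supp\mathfrak{M}_\alpha\subset T^{1,R}M$ is compact, so distances on it are bounded by some $D_0<\infty$; hence the asserted inequality holds with $K'\ge D_0/\delta_0$ whenever $\dist(\pi_{TM}(v),y)\ge\delta_0$, and it suffices to treat $y$ with $\dist(\pi_{TM}(v),y)<\delta_0$ for a threshold $\delta_0=\delta_0(\kappa)$ (at most the $\delta$ of the crossing lemma below) fixed presently. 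Let $w:=(\pi_{TM}|_{\supp\mathfrak{M}_\alpha})^{-1}(y)$, which is well defined by Theorem~\ref{T20}. By the $1/2$-H\"older estimate of that theorem, $\dist(v,w)^2\le K\dist(\pi_{TM}(v),y)<K\delta_0$, so choosing $\delta_0<\kappa^2/(4K)$ forces $\dist(v,w)<\kappa/2$; since $|v|=|w|=1$ this gives $\dist(w,\Light(M,[g]))\ge\dist(v,\Light(M,[g]))-\dist(v,w)>\kappa/2$, i.e. $w\in\Time(M,[g])^{\kappa/2}$. Both $\gamma_v$ and $\gamma_w$ are maximizers: for $\alpha\in(\T^\ast)^\circ$ by Proposition~\ref{P20a} together with Corollary~\ref{P19}, and in general by the argument underlying Theorem~\ref{T20}.

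Next one proves the timelike crossing lemma: there exist $\e,\delta,\eta>0$ and $K_0<\infty$, depending on $(M,g)$, $g_R$ and $\kappa$, such that for every geodesically convex neighborhood $U$ and all future pointing pregeodesics $x_1,x_2\colon[-\e,\e]\to U$ with $\dot{x}_1(0),\dot{x}_2(0)\in\Time(M,[g])^{\kappa/2}$, $\dist(x_1(0),x_2(0))\le\delta$ and $\dist(\dot{x}_1(0),\dot{x}_2(0))\ge K_0\dist(x_1(0),x_2(0))$, there exist future pointing $C^1$-curves $y_1,y_2\colon[-\e,\e]\to U$ with the same boundary conditions as in Lemma~\ref{L20} and
$$L^g(y_1)+L^g(y_2)-L^g(x_1)-L^g(x_2)\ge\eta\,\dist(\dot{x}_1(0),\dot{x}_2(0))^2.$$
Granting this, suppose $\dist(v,w)\ge K_0\dist(\pi_{TM}(v),y)$. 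Apply the crossing lemma in the Abelian cover $(\overline{M},\overline{g})$ (the lemma is local, so the constants from the compact $(M,g)$ carry over) to lifts $\overline{\gamma_v}|_{[-\e,\e]}$ and $\overline{\gamma_w}|_{[-\e,\e]}$ realizing $\dist(\pi_{TM}(v),y)$, so that their base points lie in a convex $U$ at distance $\le\delta$ and the time-$0$ tangents are $v$ and $w\in\Time(M,[g])^{\kappa/2}$; this produces $y_1,y_2$ with $L^g(y_1)+L^g(y_2)>L^g(x_1)+L^g(x_2)$. This contradicts the maximizing/calibration property of $\gamma_v,\gamma_w$: for $\alpha\in(\T^\ast)^\circ$, adding the pseudo-time-function inequalities $\tau(\overline{\gamma_w}(\e))-\tau(\overline{\gamma_v}(-\e))\ge\mathfrak{l}^\ast(\alpha)L^g(y_1)$ and $\tau(\overline{\gamma_v}(\e))-\tau(\overline{\gamma_w}(-\e))\ge\mathfrak{l}^\ast(\alpha)L^g(y_2)$ (valid by Lemma~\ref{L15} for a calibration $\tau$ representing $\alpha$) and comparing with the calibration identities $\tau(\overline{\gamma_v}(\e))-\tau(\overline{\gamma_v}(-\e))=\mathfrak{l}^\ast(\alpha)L^g(x_1)$ and $\tau(\overline{\gamma_w}(\e))-\tau(\overline{\gamma_w}(-\e))=\mathfrak{l}^\ast(\alpha)L^g(x_2)$ yields $L^g(x_1)+L^g(x_2)\ge L^g(y_1)+L^g(y_2)$, a contradiction (the case $\alpha\in\partial\T^\ast$ is handled as in Theorem~\ref{T20}). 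Hence $\dist(v,w)<K_0\dist(\pi_{TM}(v),y)$, and $K':=\max\{K_0,D_0/\delta_0\}$ works.

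The crossing lemma is obtained by re-running the proof of Lemma~\ref{L20}, with two improvements coming from $\dot{x}_1(0),\dot{x}_2(0)\in\Time(M,[g])^{\kappa/2}$. Because the timelike cone fibres are convex, the $(U,g)$-geodesic joining $x_1(-\e)$ to $x_2(\e)$, and all auxiliary pregeodesics appearing in step~(i) of that proof, have tangents in $\Time(M,[g])^{\kappa'}$ for some $\kappa'=\kappa'(\kappa,\e)>0$ \emph{independent of} $\dist(x_1(0),x_2(0))$; this replaces the bound $\dist(\exp^{-1}(\cdot),\Light(M,[g]))\ge\e_0\dist(v,w)^2$ of Lemma~\ref{L20} by a uniform one and makes the connectivity $x_2(\e)\in I^+(x_1(-\e))$ immediate. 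On $\Time(M,[g])^{\kappa'}$ the Lorentzian length integrand $v\mapsto\sqrt{|g(v,v)|}=\sqrt{-g(v,v)}$ is smooth with uniformly bounded derivatives — in particular $C^2$, in contrast to the merely $1/2$-H\"older behavior near $\Light(M,[g])$ recorded in Lemma~\ref{L20a}(ii) — so in the length comparison one expands $L^g(y_1)+L^g(y_2)-L^g(x_1)-L^g(x_2)$ to second order in the perturbation about the common timelike direction; the first-order contributions of $y_1$ and $y_2$ are opposite and cancel (Mather's mechanism, now available), so the length lost through the offset $\dist(x_1(0),x_2(0))$ is only $O(\dist(x_1(0),x_2(0))^2/\e)$, while the reverse Cauchy--Schwarz inequality of Lemma~\ref{L20a}(i) still contributes a gain $\ge c\,\e\,\dist(\dot{x}_1(0),\dot{x}_2(0))^2$; for $K_0$ large the gain dominates and gives the displayed inequality. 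The main obstacle is precisely this last point: upgrading the linear-in-$\dist(x_1(0),x_2(0))$ error of Lemma~\ref{L20} to a quadratic one by retaining and cancelling the first-variation terms. Everything else — the reverse Cauchy--Schwarz estimate, the bi-Lipschitz control of $\exp^{-1}$, and the bookkeeping inside $\Time(M,[g])^{\kappa'}$ — then proceeds as in the proof of Lemma~\ref{L20}, since one is uniformly away from the light cones.
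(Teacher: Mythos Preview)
Your overall architecture matches the paper's: use the $1/2$-H\"older estimate of Theorem~\ref{T20} to force $w\in\Time(M,[g])^{\kappa/2}$, then invoke a sharpened crossing lemma in which the hypothesis $\dist(\dot x_1(0),\dot x_2(0))\ge K_0\dist(x_1(0),x_2(0))$ is linear rather than quadratic. That is precisely Lemma~\ref{L23} in the paper, and the paper's proof of Proposition~\ref{T21} is the one-line reduction you give.

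The real divergence is in how the crossing lemma itself is established. The paper does \emph{not} re-run the geometric argument of Lemma~\ref{L20}. Instead it observes that $\mathfrak{E}'(v)=-\sqrt{-g(v,v)}$ is smooth with positive definite second fibre derivative on $\Time(M,[g])^{\overline\kappa/2}\cap T^{1,R}M$, extends it to a Tonelli Lagrangian $\mathfrak{E}$ on all of $TM$ satisfying $-\sqrt{-g(v,v)}\le\mathfrak{E}(v)$ on causal vectors, and then invokes Mather's classical crossing lemma (Lemma~\ref{L23a}, derived from the Weierstrass theorem~\ref{T24}) for $\mathfrak{E}$. The only additional work is to check that the $\mathfrak{E}$-minimizing competitors $y_1,y_2$ produced by Mather's lemma are themselves future pointing timelike, which follows from the Taylor-type closeness of $\mathfrak{E}$-extremals to $g$-pregeodesics for short times. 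Your route --- re-entering the proof of Lemma~\ref{L20} and upgrading the $C_3\dist(p,q)\,\e$ error there to second order by ``cancelling first-variation terms'' --- is in spirit the same mechanism (it \emph{is} Mather's mechanism), but you would have to rebuild the second-variation bookkeeping from scratch inside that proof, whereas the paper sidesteps this entirely by packaging the problem as a Tonelli Lagrangian and quoting the known result. Both approaches buy the same conclusion; the paper's is shorter and cleaner, yours is more self-contained but only sketched.

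A small point on the deduction step: you split into $\alpha\in(\T^\ast)^\circ$ (calibration argument, as in the proof of Theorem~\ref{T23}) and $\alpha\in\partial\T^\ast$ (``as in Theorem~\ref{T20}''). The paper does not make this split for Proposition~\ref{T21}; it uses the measure-maximality argument uniformly in $\alpha\in\T^\ast$, exactly as Theorem~\ref{T20} is derived from Lemma~\ref{L20} following Mather. Your calibration detour is correct for interior $\alpha$ but unnecessary here.
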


As in the classical case for Tonelli Lagrangians in \cite{ma} the proposition follows from the following lemma:

\begin{lemma}\label{L23}
Let $\kappa'>0$. Then there exist $\e, \delta, \eta >0$ and $K'<\infty$ such that for every pair of future-pointing 
pregeodesics $x_1,x_2 \colon [-\e, \e]\rightarrow M$ with 
\begin{enumerate}[(i)]
\item $\dist(x_1(0),x_2(0))\le \delta$, 
\item $\dist(x'_1(0),x'_2(0))\ge K'\dist(x_1(0),x_2(0))$ and
\item $x'_1(0)$ and $x'_2(0)\in\Time(M,[g])^{\kappa'}$, 
\end{enumerate}
there exist future-pointing $C^1$-curves $y_1,y_2\colon [-\e,\e]\rightarrow M$ with 
$y_1(-\e)=x_1(-\e)$, $y_1(\e)=x_2(\e)$, $y_2(-\e)=x_2(-\e)$,
$y_2(\e)=x_1(\e)$ and
    $$L^g(y_1)+L^g(y_2)-L^g(x_1)-L^g(x_2)\ge \eta \dist (\dot{x}_1(0),\dot{x}_2,(0))^2.$$
\end{lemma}

\begin{proof}[Proof of Proposition \ref{T21}]
With Theorem \ref{T20} we know that $\pi_{TM}|_{\supp\mathfrak{M}_\alpha}$ is injective and the inverse is $1/2$-H\"older continuous. Therefore we can assume 
that for $v,w\in\supp\mathfrak{M}_\alpha$ sufficiently close with (w.l.o.g.) $v\in\Time(M,[g])^\kappa$ we have $w\in\Time(M,[g])^{\kappa/2}$. Set $\kappa':=
\kappa/2$. Now the claim follows from Lemma \ref{L23} mutatis mutandis as the argument to \cite[Theorem 2]{ma}.
\end{proof}

\begin{proof}[Proof of Theorem \ref{T23}]
Let $\alpha\in (\T^\ast)^\circ$ and $\tau\colon\overline{M}\to \R$ be a calibration representing $\alpha$. By Proposition \ref{P20+} there exists $\kappa=
\kappa(\alpha)>0$ such that $v\in \Time(M,[g])^\kappa$ for all $v\in\mathfrak{V}(\tau)$. Choose $\e, \delta, \eta >0$ and $K'<\infty$ according to Lemma \ref{L23}. 
Assume that there exist $v,w\in \mathfrak{V}(\tau)$ with 
$$\dist(\pi_{TM}(v),\pi_{TM}(w))\le \d\text{ and }\dist(v,w)\ge K'\dist(\pi_{TM}(v),\pi_{TM}(w)).$$
Then Lemma \ref{L23} implies 
$$d(\overline{\gamma}_v(-\e),\overline{\gamma}_w(\e))+d(\overline{\gamma}_w(-\e),\overline{\gamma}_v(\e))-
L^g(\gamma_v|{[-\e,\e]})-L^g(\gamma_w|_{[-\e,\e]})\ge \eta \dist\nolimits^2(v,w)$$
where $\overline{\gamma}_v$ and $\overline{\gamma}_w$ are lifts of $\gamma_v$ resp. $\gamma_w$ to $\ol M$ with $\ol\dist(\overline{\gamma}_v(0),
\overline{\gamma}_w(0))=\dist(\gamma_v(0),\gamma_w(0))$.
For $\dist(v,w)>0$, i.e. $\gamma_v$ and $\gamma_w$ do not coincide, this leads to a contradiction.
Since $\gamma_v$ and $\gamma_w$ are calibrated by $\tau$ we have 
\begin{align*}
&\tau(\overline{\gamma}_v(\e))-\tau(\overline{\gamma}_v(-\e))
=\mathfrak{l}^\ast(\alpha)(d(\overline{\gamma}_v(-\e),\overline{\gamma}_v(\e)))
=\mathfrak{l}^\ast(\alpha)L^g(\gamma_v|_{[-\e,\e]}),\\
&\tau(\overline{\gamma}_w(\e))-\tau(\overline{\gamma}_w(-\e))
=\mathfrak{l}^\ast(\alpha)(d(\overline{\gamma}_w(-\e),\overline{\gamma}_w(\e)))
=\mathfrak{l}^\ast(\alpha)L^g(\gamma_w|_{[-\e,\e]}),\\
&\tau(\overline{\gamma}_w(\e))-\tau(\overline{\gamma}_v(-\e))
\ge \mathfrak{l}^\ast(\alpha)d(\overline{\gamma}_v(-\e),\overline{\gamma}_w(\e))\\
\text{ and}&\\
&\tau(\overline{\gamma}_v(\e))-\tau(\overline{\gamma}_w(-\e))
\ge \mathfrak{l}^\ast(\alpha)d(\overline{\gamma}_w(-\e),\overline{\gamma}_v(\e)).
\end{align*}
Then we get
\begin{align*}
0=&[\tau(\overline{\gamma}_w(\e))-\tau(\overline{\gamma}_v(-\e))]
+[\tau(\overline{\gamma}_v(\e))-\tau(\overline{\gamma}_w(-\e))]\\
&-[\tau(\overline{\gamma}_v(\e))-\tau(\overline{\gamma}_v(-\e))]
-[\tau(\overline{\gamma}_w(\e))-\tau(\overline{\gamma}_w(-\e))]\\
\ge&\mathfrak{l}^\ast(\alpha)d(\overline{\gamma}_v(-\e),\overline{\gamma}_w(\e))
+\mathfrak{l}^\ast(\alpha)d(\overline{\gamma}_w(-\e),\overline{\gamma}_v(\e))\\
&-\mathfrak{l}^\ast(\alpha)L^g(\gamma_v|_{[-\e,\e]})-\mathfrak{l}^\ast(\alpha)L^g(\gamma_w|_{[-\e,\e]})\\
\ge&\eta\;\mathfrak{l}^\ast(\alpha)\dist\nolimits^2(v,w)>0.
\end{align*}
Note that for $\dist(v,w)=0$ the claim is empty. This completes the proof.
\end{proof}

For the proof of Lemma \ref{L23} we will need a Theorem due to Weierstrass. For a discussion and proof in (the more general) time periodic case see \cite{ma}. 
Consider a Lagrange function $\mathfrak{E}\colon TM\to \R$ with positive definite second fiber derivative and fiberwise superlinear growth. We say that a function 
$\mathfrak{E}\colon TM\to \R$ has {\it positive definite se\-cond fiber derivative} if for any $p\in M$ the restriction $\mathfrak{E}|_{TM_p}$ has positive definite 
Hessian in any system of linear coordinates on $TM_p$. Further we say that $\mathfrak{E}$ has {\it fiberwise superlinear growth} if 
$$\frac{\mathfrak{E}(v)}{|v|}\to \infty\text{ as }|v|\to\infty,\text{ for all }v\in TM.$$
Define for an absolutely continuous curve $\gamma\colon [a,b]\to M$ the {\it action} $A^\mathfrak{E}$ of $\gamma$ as $A^\mathfrak{E}(\gamma):=
\int_a^b \mathfrak{E}(\dot{\gamma}(t))dt$. 

\begin{theorem}[\cite{ma}]\label{T24}
For any $c>0$, there exist $\e_0,C_0,C_1>0$, such that if $a<b\le a+\e_0$ and $\gamma\colon [a,b]\to M$ is a solution
of the Euler-Lagrange equations of $\mathfrak{E}$ satisfying $|\dot{\gamma}(t)|\le c$ for all $t\in [a,b]$, then 
$$A^\mathfrak{E}(\gamma_1)\ge A^\mathfrak{E}(\gamma)+F\left(\int_a^b\dist(\dot{\gamma}(t),\dot{\gamma}_1(t))dt\right)$$
for any absolutely continuous curve $\gamma_1\colon [a,b]\to M$ such that $\gamma_1(a)=\gamma(a)$ and
$\gamma_1(b)=\gamma(b)$. Here,
$$F(s)=\min\{C_0 s,C_1 s^2\}.$$
Moreover, still assuming $b-a\le\e_0$, we have that for any $x_a,x_b\in M$ such that $\dist(x_a,x_b)\le c(b-a)/2$, there
exists a solution $\gamma$ of the Euler-Lagrange equation satisfying $\gamma(a)=x_a, \gamma(b)=x_b$, and
$|\dot{\gamma}(t)|\le c$, for all $t\in [a,b]$.
\end{theorem}

\begin{lemma}[\cite{ma}]\label{L23a}
If $c>0$, then there exist $\e,\d,\eta,K'>0$ such that if $x_{1,2}\colon [-\e,\e]\to M$ are solutions of the Euler-Lagrange 
equation of $\mathfrak{E}$ with 
$$|\dot{x}_i(0)|\le c,\, \dist(x_1(0),x_2(0))\le \d\text{ and }\dist(\dot{x}_1(0),\dot{x}_2(0))\ge K'\dist(x_1(0),x_2(0)),$$ 
then there exist $C^1$-curves $y_{1},y_1\colon [-\e,\e]\to M$ such that $y_1(-\e)=x_1(-\e)$, $y_1(\e)=x_2(\e)$, 
$y_2(-\e)=x_2(-\e)$, $y_2(\e)=x_1(\e)$ and 
$$A^\mathfrak{E}(x_1)+A^\mathfrak{E}(x_2)-A^\mathfrak{E}(y_1)-A^\mathfrak{E}(y_2)\ge 
\eta \dist\nolimits^2(\dot{x}_1(0),\dot{x}_2(0)).$$
\end{lemma}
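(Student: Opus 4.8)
The plan is to follow Mather's original argument and reduce the statement to an estimate for the minimal action function on a short interval, using only the Weierstrass Theorem~\ref{T24}, which is already at our disposal.

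First I would fix $c>0$ and apply Theorem~\ref{T24} with the constant $3c$ to obtain $\e_0,C_0,C_1>0$ and the gap function $F$. Then fix $\e<\e_0/2$ and choose $\d$ small relative to $\e$; the numbers $K'$ (large) and $\eta$ (small) will be pinned down at the very end, and all four quantities $\e,\d,\eta,K'$ will depend only on $c$ (and on $\mathfrak E$, $M$, $g_R$). A routine short-time/Gronwall estimate for the Euler--Lagrange flow shows that, for $\e,\d$ small, $x_1$ and $x_2$ stay in a single coordinate chart with $|\dot x_i(t)|\le 2c$ on $[-\e,\e]$, that in that chart $x_i(\pm\e)=x_i(0)\pm\e\dot x_i(0)+O(\e^2)$ with an absolute constant, and --- crucially --- that the \emph{difference} of the two solutions satisfies $\bigl(x_i(\pm\e)-x_j(\pm\e)\bigr)-\bigl(x_i(0)-x_j(0)\bigr)\mp\e\bigl(\dot x_i(0)-\dot x_j(0)\bigr)=O(\e^2\theta)$, where I write $\theta:=\dist(\dot x_1(0),\dot x_2(0))$ and $d:=\dist(x_1(0),x_2(0))$, so that the hypothesis reads $\theta\ge K'd$. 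Now define the competitors as Euler--Lagrange solutions: let $y_1$ be the E--L solution on $[-\e,\e]$ with $y_1(-\e)=x_1(-\e)$, $y_1(\e)=x_2(\e)$, and $y_2$ the one with $y_2(-\e)=x_2(-\e)$, $y_2(\e)=x_1(\e)$. These exist, with $|\dot y_i|\le 3c$, by the second assertion of Theorem~\ref{T24}, since $\dist(x_1(-\e),x_2(\e))\le 2c\e+d\le 3c\e=3c\cdot(2\e)/2$ once $\d\le c\e$; being E--L solutions they are $C^1$, so the regularity required of the $y_i$ holds automatically.

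Since $2\e\le\e_0$ and all velocities are bounded by $3c$, the first assertion of Theorem~\ref{T24} tells us that each of $x_1,x_2,y_1,y_2$ minimizes $A^{\mathfrak E}$ among curves on $[-\e,\e]$ with its own endpoints. Writing $a_i:=x_i(-\e)$, $b_i:=x_i(\e)$ and $h(a,b):=\min\{A^{\mathfrak E}(\gamma)\,:\,\gamma\colon[-\e,\e]\to M,\ \gamma(-\e)=a,\ \gamma(\e)=b\}$, this yields the exact identity
$$A^{\mathfrak E}(x_1)+A^{\mathfrak E}(x_2)-A^{\mathfrak E}(y_1)-A^{\mathfrak E}(y_2)=h(a_1,b_1)+h(a_2,b_2)-h(a_1,b_2)-h(a_2,b_1).$$
For $\dist(a,b)$ small the function $h$ is smooth (the minimizing E--L trajectory from $a$ to $b$ in time $2\e$ is unique and depends smoothly on $(a,b)$, and $h$ is its action), and by the short-time asymptotics of generating functions its mixed Hessian obeys the uniform expansion $\partial_a\partial_b h(a,b)=-\tfrac1{2\e}\widehat H(a,b)+O(1)$, where $\widehat H$ is a matrix field, symmetric to leading order and pinched between two positive multiples of the identity (the model $\mathfrak E=\tfrac12|v|^2$ giving $h=\tfrac{|b-a|^2}{4\e}$, $\widehat H=\mathrm{id}$, the general case governed by the positive-definite fibre Hessian of $\mathfrak E$). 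Applying the fundamental theorem of calculus twice, inserting $a_2-a_1=e-\e\vartheta+O(\e^2\theta)$ and $b_2-b_1=e+\e\vartheta+O(\e^2\theta)$ with $e:=x_2(0)-x_1(0)$ (so $|e|$ comparable to $d$) and $\vartheta:=\dot x_2(0)-\dot x_1(0)$, and using the symmetry of $\widehat H$ to cancel the cross terms $\langle\widehat H e,\vartheta\rangle-\langle\widehat H\vartheta,e\rangle$, one obtains
$$A^{\mathfrak E}(x_1)+A^{\mathfrak E}(x_2)-A^{\mathfrak E}(y_1)-A^{\mathfrak E}(y_2)=\tfrac\e2\langle\widehat H\vartheta,\vartheta\rangle-\tfrac1{2\e}\langle\widehat H e,e\rangle+r,$$
with $\langle\widehat H e,e\rangle\ge 0$ and a remainder bounded by $|r|\le C\bigl(\e\,d\,\theta+\e^2\theta^2+d^2\bigr)$.

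It then remains to choose the constants. The lower pinch on $\widehat H$ gives $\tfrac\e2\langle\widehat H\vartheta,\vartheta\rangle\ge\tfrac{c_\ast\e}{2}\theta^2$ (after comparing $|\vartheta|$ with $\theta$), the upper pinch gives $\tfrac1{2\e}\langle\widehat H e,e\rangle\le\tfrac{C}{2\e}d^2$, and, using $d\le\theta/K'$, the total negative contribution is at most $\bigl(\tfrac{C}{\e K'^2}+\tfrac{C}{K'}+C\e^2\bigr)\theta^2$. Hence, choosing first $\e$ small enough that $C\e^2<\tfrac{c_\ast\e}{8}$ and then $K'$ large enough that $\tfrac{C}{\e K'^2}+\tfrac{C}{K'}<\tfrac{c_\ast\e}{8}$, we get
$$A^{\mathfrak E}(x_1)+A^{\mathfrak E}(x_2)-A^{\mathfrak E}(y_1)-A^{\mathfrak E}(y_2)\ge\tfrac{c_\ast\e}{4}\,\theta^2=:\eta\,\dist\nolimits^2(\dot x_1(0),\dot x_2(0)),$$
which is the assertion. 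The main obstacle is the uniform short-time expansion $\partial_a\partial_b h=-\tfrac1{2\e}\widehat H+O(1)$ together with the uniform control of $r$ by a small multiple of $\theta^2$: this is where compactness of $M$, positive-definiteness and superlinear growth of $\mathfrak E$, and the Gronwall bound $O(\e^2\theta)$ (rather than $O(\e^2)$) for the discrepancy between the two solutions all enter --- without the extra factor $\theta$ there the estimate would collapse for small $\theta$. An alternative that avoids second derivatives of $h$ is to run the computation by hand in a chart with straight-line competitors and use the gap term $F(\cdot)$ of Theorem~\ref{T24} to bound the difference between straight-line and extremal actions, in the spirit of the proof of Lemma~\ref{L20}.
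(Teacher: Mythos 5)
The paper gives no proof of this lemma --- it is cited verbatim from Mather \cite{ma} --- so the natural in-paper comparison is to Lemma \ref{L20}, the Lorentzian analogue, which the paper proves via explicit comparison pregeodesics in a convex normal neighborhood and direct estimates. Your generating-function route is genuinely different and is a clean reduction: taking $y_1,y_2$ to be the short-time Euler--Lagrange minimizers (furnished by Theorem \ref{T24}) turns the action gap into the cross-difference $h(a_1,b_1)+h(a_2,b_2)-h(a_1,b_2)-h(a_2,b_1)$ of the two-point minimal action $h$, and a second-order expansion of $h$ then isolates the leading quadratic form. The telescoping identity, the Gronwall estimate giving $O(\e^2\theta)$ rather than $O(\e^2)$ for the boundary discrepancies, the use of the approximate symmetry of $\widehat H$ to kill the cross terms, and the constant-chasing at the end are all correct.

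But there is a genuine gap, and you flag it honestly: the uniform short-time expansion $\partial_a\partial_b h = -\tfrac1{2\e}\widehat H + O(1)$, with $\widehat H$ symmetric to leading order and uniformly pinched between positive multiples of the identity, is asserted and not proved. This is the entire analytic content of the lemma. To justify it one must trace through the Legendre transform and the short-time linearization of the Euler--Lagrange flow to identify $\widehat H$ with (a perturbation of) the fibre Hessian $\mathfrak E_{vv}$, and then verify uniformity of the $O(1)$ remainder and the pinching constants over the compact manifold --- essentially the same amount of work as the lemma itself. As written, then, the argument reduces the statement to another unproved statement of comparable depth. Your suggested alternative --- work in a chart with straight-line competitors and bound the extremal-versus-straight-line action gap by the $F(\cdot)$ term of Theorem \ref{T24}, as is done in the paper's proof of Lemma \ref{L20} --- is the route you would need to carry out for a self-contained argument from the tools the paper makes available.
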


\begin{proof}[Proof of Lemma \ref{L23}]
The idea is to transform the problem to fit the si\-tua\-tion of Lemma \ref{L23a}. Choose for every $\overline{\e}<\frac{\inj(M,g)}{3}$ a real number 
$\overline{\d}=\overline{\d}(\overline{\e})\in(0,\overline{\e})$ such that $B_{\overline{\d}}(\chi(0))\subset I^+_U(\chi(-\overline{\e}))$ for all future-pointing 
pregeodesics $\chi\colon \R\to M$ with $\chi'(0)\in\Time(M,[g])^{\kappa}$, where $U$ is any convex normal neighborhood of $B_{2\overline{\e}}(\chi(0))$. 
Next choose $\overline{\kappa}\in(0,\kappa)$ for the pair $(\overline{\e},\overline{\d})$ such that for any pair of future-pointing pregeodesics $\chi_1,\chi_2\colon 
\R\to M$ with $\dist(\chi_1(0),\chi_2(0))\le \overline{\d}$ and $\chi'_1(0),\chi'_2(0)\in \Time(M,[g])^\kappa$ the unique pregeodesic $\psi\colon [-\e',\e']\to M$ with 
$\psi(-\e')=\chi_1(-\overline{\e})$ and $\psi(\e')=\chi_2(\overline{\e})$ satisfies $\psi'(t)\in\Time(M,[g])^{\overline{\kappa}}$ for all $t\in [-\e',\e']$. 

Set
$$\mathfrak{E}'\colon \Time(M,[g])^{\overline{\kappa}/2}\cap T^1M\to \R,\;v\mapsto -\sqrt{|g(v,v)|}$$
$\mathfrak{E}'$ is a convex function with respect to the induced Riemannian metric on the future-pointing timelike vectors in $T^1M$ 
and has positive definite second fiber derivative everywhere. Choose a convex extension 
$\mathfrak{E}\colon TM\to\R$ of $\mathfrak{E}'$ such that the second fiber derivative is positive definite, $\mathfrak{E}$ 
has superlinear growth and 
\begin{align}\label{e25}
-\sqrt{|g(v,v)|}\le\mathfrak{E}(v)
\end{align}
for all future-pointing $v\in TM$. For an absolutely continuous curve  $\gamma\colon [a,b]\to M$ set 
$A^\mathfrak{E}(\gamma):=\int_a^b \mathfrak{E}(\dot{\gamma}(t))dt$. Note that under these conditions there exists
$\e_1=\e_1(g,g_R,\mathfrak{E})>0$ such that every pregeodesic $x\colon [-\e_1,\e_1]\to M$ with 
$x'(t)\in \Time(M,[g])^{\overline{\kappa}}$ for all $t\in[-\e_1,\e_1]$ is a minimizers of $\mathfrak{E}$. 

More precisely, choose $\e_0>0$ for $c=1$ according to Theorem \ref{T24} and consider a pregeodesic $x\colon [a,b]\to M$ with $x'(t)\in 
\Time(M,[g])^{\overline{\kappa}}$ for all $t\in [a,b]$ and $b-a\le \e_0$. Since $x$ is parameterized w.r.t. $g_R$-arclength we have $\dist(x(a),x(b))\le \e_0$. By 
Theorem \ref{T24} there exists a solution $y\colon [a,b]\to M$ of the Euler-Lagrange equation of $\mathfrak{E}$ with $y(a)=x(a)$, $y(b)=x(b)$ and $|\dot{y}(t)|\le 
1$ for all $t\in[a,b]$. This solution is a minimizer according to Theorem \ref{T24}. Using the Taylor expansion of $x$ and $y$ in a system of local coordinates and 
noting that $x$ as well as $y$ satisfy an ordinary differential equation of second order with locally bounded coefficients, we see that 
$$\dist(x'(a),\dot{y}(a))\le C(b-a)$$
for some $C<\infty$ depending only on $g$, $g_R$ and $\mathfrak{E}$. For $b-a\le \frac{\overline{\kappa}}{2C}$ we have 
$\dot{y}(a)\in \Time(M,[g])^{\overline{\kappa}/2}$, since we assumed $x'(a)\in \Time(M,[g])^{\overline{\kappa}}$. 
With the continuity of the Euler-Lagrange flow of $\mathfrak{E}$ we obtain that $y$ is future-pointing for sufficiently small 
$b-a\le \min\{\e_0,\frac{\overline{\kappa}}{2C}\}$. Since $x$ locally maximizes $g$-arclength we have
$$A^\mathfrak{E}(y)\ge -L^g(y)\ge -L^g(x)=A^\mathfrak{E}(x),$$
by (\ref{e25}), and the pregeodesic $x$ is identical with the minimizer $y$ according to Theorem \ref{T24}. 

According to Lemma \ref{L23a}, there exist $\e,\d,\eta>0$ and $K'<\infty$ such that if $\dist(x_1(0),x_2(0))\le \d$ and $\dist(x'_1(0),x'_2(0))\ge 
K'\dist(x_1(0),x_2(0))$ we have
$$A^\mathfrak{E}(x_1)+A^\mathfrak{E}(x_2)-A^\mathfrak{E}(y_1)-A^\mathfrak{E}(y_2)
\ge \eta \dist(\dot{x}_1(0),\dot{x}_2(0))^2,$$
for the $\mathfrak{E}$-minimizer $y_{1},y_2\colon [-\e,\e]\to M$ with $y_1(-\e)=x_1(-\e)$, $y_1(\e)=x_2(\e)$, 
$y_2(-\e)=x_2(-\e)$ and $y_2(\e)=x_1(\e)$. 

It remains to show that the curves $y_{1}$ and $y_2$ are future-pointing for $\e,\d>0$ sufficiently small. W.l.o.g. we can assume that $\e\le \overline{\e}$ and 
$\d\le \overline{\d}$. Choose a convex normal neighborhood $U$ of $x_1(0)$ with $B_{2\e+\d}(x_1(0))\subset U$. Then we have $x_1,x_2\subset U$. For the 
unique pregeodesics $\psi_{1,2}\colon[-\e'_{1,2},\e'_{1,2}]\to U$ such that $\psi_1(-\e'_1)=x_1(-\e)$, $\psi_1(\e'_1)=x_2(\e)$, $\psi_2(-\e'_2)=x_2(-\e)$ and 
$\psi_2(\e'_2)=x_1(\e)$ we have $\psi'_i(t)\in \Time(M,[g])^{\overline{\kappa}}$ for all $|t|\le \e'_i$ by our assumption on $(\overline{\e},\overline{\d})$. 
We have seen above that the minimizer $y_i\colon[-\e'_i,\e'_i]\to M$ with $y_i(\pm\e'_i)=\psi_i(\pm\e'_i)$ is identical to $\psi_i$ for $\e'_i$ 
sufficiently small.  Since we know that $\e'_i\le C_{g,g_R} \e$ (Corollary \ref{1.10}), the bound on $\e'_i$ depends 
only on $\kappa$, $g$ and $g_R$. Using (\ref{e25}) we have $A^\mathfrak{E}(y_i)\ge -L^g(y_i)$. Since 
$A^\mathfrak{E}(x_i)=-L^g(x_i)$ the lemma follows immediately.
\end{proof}

\newpage

\end{document}